\numberwithin{equation}{section}
\newtheorem{theo}{Theorem}[section]
\newtheorem{lem}{Lemma}[section]
\newtheorem{pro}{Proposition}[section]
\newtheorem{cor}{Corollary}[section]
\title[Uniform hyperbolicity of invariant cylinder]{Uniform hyperbolicity of invariant cylinder}
\author{Chong-Qing Cheng}
\address{Department of mathematics, Nanjing Univerisity, Nanjing 210093, China}
\email{chengcq@nju.edu.cn.}
\begin{document}
\maketitle
\begin{abstract} For a nearly integrable Hamiltonian systems $H=h(p)+\epsilon P(p,q)$ with $(p,q)\in\mathbb{R}^3\times\mathbb{T}^3$, large normally hyperbolic invariant cylinders exist along the whole resonant path, except for the $\sqrt{\epsilon}^{1+d}$-neighborhood of finitely many double resonant points. It allows one to construct diffusion orbits to cross double resonance.
\end{abstract}
\begin{spacing}{0.5}
\tableofcontents
\end{spacing}
\renewcommand\contentsname{Index}

\section{Introduction and the main result}
\setcounter{equation}{0}
In this paper, we study small perturbations of integrable Hamilton systems with three degrees of freedom
\begin{equation}\label{nearlyintegrable}
H(p,q)=h(p)+\epsilon P(p,q), \qquad (p,q)\in\mathbb{R}^3\times\mathbb{T}^3,
\end{equation}
where $\partial^2h(p)$ is positive definite, both $h$ and $P$ are $C^{\kappa}$-functions with $\kappa\ge6$. In the energy level $H^{-1}(E)$ with $E>\min h$, we search for invariant cylinders along resonant path. An irreducible integer vector $k'\in\mathbb{Z}^3\backslash\{0\}$ determines a resonant path
$$
\Gamma'=\{p\in h^{-1}(E):\langle\partial h(p),k'\rangle=0\}.
$$
A point $p''\in\Gamma'$ is called double resonant if $\exists$ another irreducible vector $k''\in\mathbb{Z}^3\backslash\{0\}$, independent of $k'$, such that $\langle k'',\partial h(p'')\rangle=0$ holds as well. There are infinitely many double resonant points, but only strong double resonance causes trouble. A double resonance is called strong if $|k''|$ is not so large.

To make things simpler we introduce a symplectic coordinate transformation
$$
\mathfrak{M}:\qquad u=M^tq,\qquad v=M^{-1}p,
$$
where the matrix is made up by three integer vectors $M=(k'',k',k_3)$. As both $k'$ and $k''$ are irreducible, $\exists$ $k_3\in\mathbb{Z}^3$ such that $\mathrm{det}M=1$. There are infinitely many choices for $k_3$, we choose that $k_3$ so that $|k_3|$ is the smallest one. For simplicity of notation, we assume that the Hamiltonian of (\ref{nearlyintegrable}) is already under such transformation.

To study invariant cylinders we need normal form around a double resonance. We introduce a coordinate transformation $\Phi_{\epsilon F}$ which is defined as the time-$2\pi$-map $\Phi_{\epsilon F}=\Phi^t_{\epsilon F}|_{t=2\pi}$ of the Hamiltonian flow generated by the function $\epsilon F(p,q)$. This function solves the homological equation
\begin{equation}\label{homologicalequation}
\Big\langle\frac{\partial h}{\partial p}(p''),\frac{\partial F}{\partial q}\Big\rangle=-P(p,q)+Z(p,q)
\end{equation}
where
$$
Z(p,q)=\sum_{\ell\in\mathbb{Z}^3,\ell_3=0}P_{\ell}(p)e^{i(\ell_1q_1+\ell_2q_2)},
$$
in which $P_{\ell}$ represents the Fourier coefficient of $P$. Expanding $F$ into Fourier series and comparing both sides of the equation we obtain
$$
F(p,q)=\sum_{\ell\in\mathbb{Z}^3,\ell_3\ne 0}\frac{iP_{\ell}(p)}{\langle\ell,\partial h(p'')\rangle}e^{i\langle \ell,q\rangle}.
$$
Under the transformation $\Phi_{\epsilon F}$ we obtain a new Hamiltonian
\begin{align*}
\Phi_{\epsilon F}^*H=&h(p)+\epsilon Z(p,q)+\epsilon\Big\langle\frac{\partial h}{\partial p}(p)-\frac{\partial h}{\partial p}(p''),\frac{\partial F}{\partial q}\Big\rangle\\
&+\frac {\epsilon^2}2\int_0^1(1-t)\{\{H,F\},F\}\circ\Phi_{\epsilon F}^tdt.
\end{align*}
To solve Equation (\ref{homologicalequation}), we do not have problem of small divisor, $|\langle\ell,\partial h(p'')\rangle|=|\ell_3\omega_3|$, where $\omega_3=\partial_3h(p'')\ne 0$ since $h(p'')>\min h$.

The function $\Phi_{\epsilon F}^*H(u,v)$ determines its Hamiltonian equation
\begin{equation}\label{H-equation}
\frac{dq}{dt}=\frac{\partial}{\partial p}\Phi_{\epsilon F}^*H,\qquad \frac{dp}{dt}=-\frac{\partial}{\partial q}\Phi_{\epsilon F}^*H.
\end{equation}
For this equation we introduce another transformation (we call it homogenization)
$$
\tilde G_{\epsilon}=\frac 1{\epsilon}\Phi_{\epsilon F}^*H, \qquad\tilde y=\frac 1{\sqrt{\epsilon}}\Big(p-p''\Big), \qquad \tilde x=q, \qquad s=\sqrt{\epsilon}t,
$$
where $\tilde x=(x,x_3)$, $\tilde y=(y,y_3)$, $x=(x_1,x_2)$, $y=(y_1,y_2)$. Using the new canonical variables $(\tilde x,\tilde y)$ and the new time $s$, Equation (\ref{H-equation}) appears to be the Hamiltonian equation with its generating function as the following:
\begin{equation}\label{Hamiltonian}
\tilde G_{\epsilon}=\frac 1{\epsilon}\Big(h(p''+\sqrt{\epsilon}\tilde y)-h(p'')\Big)
-V(x)+\sqrt{\epsilon}\tilde R_{\epsilon}(\tilde x,\tilde y),
\end{equation}
where $V=-Z(p'',x)$ and
\begin{equation}\label{remainder}
\begin{aligned}
\tilde R_{\epsilon}&=\tilde R_1+\tilde R_2+\tilde R_3,\\
\tilde R_1&=\frac 1{\sqrt{\epsilon}}\Big[Z(p''+\sqrt{\epsilon}\tilde y,\tilde x)-Z(p'',\tilde x)\Big],\\
\tilde R_2&=\frac 1{\sqrt{\epsilon}}\Big\langle\frac{\partial h}{\partial p}\Big(p''+\sqrt{\epsilon}\tilde y\Big)-\frac{\partial h}{\partial p}(p''),\frac{\partial F}{\partial q}\Big\rangle,\\
\tilde R_3&=\frac {\sqrt{\epsilon}}2\int_0^1(1-t)\{\{H,F\},F\}\circ\Phi_{\epsilon F}^tdt.
\end{aligned}
\end{equation}
Let $D$ be a positive number independent of $\epsilon$, $\sigma\in(0,\frac 12)$. In the domain
$$
\Omega_{\epsilon}=\Big\{(\tilde x,\tilde y):|\tilde y|\le D\epsilon^{\sigma-\frac 12},\tilde x\in\mathbb{T}^3\Big\},
$$
the term $|\sqrt{\epsilon}\tilde R_i|_{C^{\ell-2}}$ is bounded by a small number of order $O(\epsilon^{\sigma})$ (for $i=1,2,3$). If we introduce a symplectic coordinate transformation again
$$
\mathfrak{S}:\qquad I=\frac{\omega_3}{\sqrt{\epsilon}}y_3,\qquad \theta=\frac{\sqrt{\epsilon}}{\omega_3}x_3,
$$
then $\frac{\partial\tilde G_{\epsilon}}{\partial I}=1+O(\epsilon^{\sigma})$ holds in the domain $\mathfrak{S}\Omega_{\epsilon}$. Therefore, there is a unique function $I=-G_{\epsilon}(x,y,\theta)$ which solves the equation
\begin{equation}\label{reduction}
\tilde G_{\epsilon}\Big(x,\frac{\omega_3}{\sqrt{\epsilon}}\theta,y,-\frac{\sqrt{\epsilon}}{\omega_3} G_{\epsilon}(x,y,\theta)\Big)=0,
\end{equation}
where we use the fact that $h(p'')=E$. The dynamics of $\tilde G_{\epsilon}$ restricted on the energy level set $\tilde G_{\epsilon}^{-1}(0)$ is equivalent to the dynamics of $G_{\epsilon}$ where $\theta$ plays the role of time. Let $\Phi_{G_{\epsilon}}^{\theta}$ denote the Hamiltonian flow produced by $G_{\epsilon}$.

To state our result, let us introduce some notations. A manifold with boundary is called cylinder if it is homeomorphic to the standard cylinder $\mathbb{T}\times[0,1]$. A typical case is a cylinder made up by periodic orbits of an autonomous Hamiltonian system where different orbit lies on different energy level. The cylinder is denoted by $\Pi_{E_1,E_2,g}$ if all orbits are associated with the same first homology class $g$ and they lie on the level set with energy $E_1$ to the set with $E_2$. The cylinder is invariant for the Hamiltonian flow. If the system is under small time-periodic perturbation, the time-periodic map generated by the Hamiltonian flow is a small perturbation of the original map. The cylinder may survive small perturbations of the map, denoted by $\Pi_{E_1,E_2,g}^{\epsilon}$ which is a section of a manifold homeomorphic to $\Pi_{E_1,E_2,g}\times\mathbb{T}$. We also call it cylinder.

The Tonelli Hamiltonian $G_{\epsilon}$ determines a Tonelli Lagrangian through the Legendre transformation. So, the $\alpha$- and $\beta$-function are well defined, denoted by $\alpha_{G_{\epsilon}}$ and $\beta_{G_{\epsilon}}$ respectively. They define the Legendre-Fenchel duality between the first homology and the first cohomology $\mathscr{L}_{\beta_{G_{\epsilon}}}$: a first cohomology class $c\in\mathscr{L}_{\beta_{G_{\epsilon}}}(g)$ if $\alpha_{G_{\epsilon}}(c)+\beta_{G_{\epsilon}}(g)=\langle c,g\rangle$. By adding a constant to $G_{\epsilon}$ we can assume $\min\alpha_{G_{\epsilon}}=0$. Once a Lagrangian $L$ is fixed, we also use $\mathscr{L}_{\beta_{L}}$ to denote the Legendre-Fenchel duality.

The Hamiltonian $G_{\epsilon}$ produces a map $\mathscr{L}_{G_{\epsilon}}$: $T^*\mathbb{T}^2\times\mathbb{T}\to T\mathbb{T}^2\times\mathbb{T}$: $(x,y,\theta)\to(x,\dot x,\theta)$ where $\dot x=\partial_yG_{\epsilon}(x,y,\theta)$. In this paper, a set in $T^*\mathbb{T}^2\times\mathbb{T}$ as well as its time-$2\pi$-section is called Mather set (Aubry set or Ma\~n\'e set) if its image under the map $\mathscr{L}_{G_{\epsilon}}$ is a Mather set (Aubry set or Ma\~n\'e set) in the usual definition. The following theorem is the main result of this paper:

\begin{theo}\label{mainresult}
For a class $g\in H_1(\mathbb{T}^2,\mathbb{R})$, there is an open-dense set $\mathfrak{V}\subset C^{r}(\mathbb{T}^2,\mathbb{R})$ $(r\ge 5)$. For each $V\in\mathfrak{V}$, there exists $\epsilon_0>0$ such that for each $\epsilon\in(0,\epsilon_0)$
\begin{enumerate}
  \item there exist finitely many normally hyperbolic invariant cylinders for the map $\Phi_{G_{\epsilon}}^{2\pi}$:
     $\Pi_{\epsilon^d,E_0+\delta,g}^{\epsilon}$, $\Pi_{E_0-\delta,E_1+\delta,g}^{\epsilon},\cdots,\Pi_{E_{j-1}-\delta,E_j+\delta,g}^{\epsilon}$, $\Pi_{E_j-\delta,D\epsilon^{\sigma-1/2},g}^{\epsilon}$ where the integer $j$, the numbers $E_j>\cdots>E_1>E_0>0$, the small numbers $\delta, d>0$ and the normal hyperbolicity of each cylinder are all independent of $\epsilon$;
  \item for each $c\in\mathscr{L}_{\beta_{G_{\epsilon}}}(\nu g)$
     \begin{enumerate}
     \item[$\bullet$] $\exists$ $N>1$ such that if $\alpha_{G_{\epsilon}}(c)\in (N\epsilon^d, E_0)$, the Aubry set lies on $\Pi_{\epsilon^d,E_0+\delta,g}^{\epsilon}$;
     \item[$\bullet$] if $\alpha_{G_{\epsilon}}(c)\in (E_i, E_{i+1})$, the Aubry set lies on $\Pi_{E_{i}-\delta,E_{i+1}+\delta,g}^{\epsilon}$ where the index $i$ ranges over the set $\{0,1,\cdots,j-1\}$;
     \item[$\bullet$] if $\alpha_{G_{\epsilon}}(c)\in (E_j,D\epsilon^{\sigma-\frac12})$, the Aubry set lies on $\Pi_{E_j-\delta,D\epsilon^{\sigma-1/2},g}^{\epsilon}$;
     \item[$\bullet$] if $\alpha_{G_{\epsilon}}(c)=E_i$, the Aubry set has two connected components, one is on $\Pi_{E_{i-1}-\delta,E_{i}+\delta,g}^{\epsilon}$ and another one is on $\Pi_{E_{i}-\delta,E_{i+1}+\delta,g}^{\epsilon}$.
      \end{enumerate}
\end{enumerate}
\end{theo}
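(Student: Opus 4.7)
The plan is to view $G_{\epsilon}$ as a $\sqrt{\epsilon}$-small time-periodic perturbation of the autonomous two-degree-of-freedom mechanical Hamiltonian
\[
H_0(x,y)=\tfrac12\langle A y,y\rangle-V(x),\qquad A=\partial^2h(p'')\big|_{(y_1,y_2)},
\]
obtained by setting $\epsilon=0$ in (\ref{Hamiltonian}) and solving (\ref{reduction}). For this mechanical system and a fixed $g\in H_1(\mathbb{T}^2,\mathbb{R})$, classical Maupertuis--Jacobi reduction says that for each energy $E>\max V$ the shortest closed geodesic in class $g$ of the Jacobi metric $ds^2=(E+V)\,\langle A^{-1}dx,dx\rangle$ produces a periodic orbit of $H_0$ on $H_0^{-1}(E)$, and these orbits sweep out a topological cylinder $\Pi_{E',E'',g}$ of $H_0$. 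First I would carry out the genericity analysis: for $V$ in an open-dense set $\mathfrak{V}\subset C^{r}(\mathbb{T}^2,\mathbb{R})$, the Jacobi-minimizing orbit is unique and non-degenerate (hence hyperbolic) except at finitely many bifurcation energies $E_0<E_1<\cdots<E_j$, where exactly two minimizers coexist and the transition is transverse. On each open interval $(E_{i-1},E_i)$ the uniqueness of the hyperbolic minimizer yields a normally hyperbolic invariant cylinder for the $H_0$-flow, and the bifurcation transversality gives overlap on intervals $(E_i-\delta,E_i+\delta)$.

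Next I would treat the two extreme ranges separately. For the high-energy end, rescale $y=E^{1/2}\hat y$; the Hamiltonian becomes $E(\tfrac12\langle A\hat y,\hat y\rangle-E^{-1}V(x))$ so the potential becomes a small perturbation of a free geodesic flow on $\mathbb{T}^2$, and the shortest closed geodesic in class $g$ persists as a hyperbolic periodic orbit; extending across energies up to $D\epsilon^{\sigma-1/2}$ gives the top cylinder $\Pi_{E_j-\delta,D\epsilon^{\sigma-1/2},g}$. For the low-energy end, I would use Mather's theory of minimal measures for the autonomous Lagrangian dual to $H_0$: for $c$ in the flat $\mathscr L_{\beta_{H_0}}(\nu g)$ with $\alpha_{H_0}(c)\in(0,E_0)$, generic $V$ produces a single hyperbolic minimizer whose continuation sweeps a cylinder $\Pi_{0,E_0+\delta,g}$; only the lower end $\epsilon^d$ is imposed because below that scale the $\sqrt{\epsilon}\tilde R_{\epsilon}$ perturbation becomes comparable to the transverse action of the periodic orbit and one cannot apply a smooth NHIM theorem.

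With the autonomous picture in place I would then switch on the perturbation. On each of the fixed cylinders $\Pi_{\cdot,\cdot,g}$ the normal hyperbolicity bound $\lambda>0$ is independent of $\epsilon$, while the perturbation $\sqrt{\epsilon}\tilde R_{\epsilon}$ is of size $O(\epsilon^{\sigma})$ in $C^{r-2}$ on $\Omega_{\epsilon}$, so the Hirsch--Pugh--Shub normally hyperbolic invariant manifold theorem applied to the time-$2\pi$ map $\Phi_{G_{\epsilon}}^{2\pi}$ produces the perturbed cylinders $\Pi_{\cdot,\cdot,g}^{\epsilon}$ with the normal hyperbolicity preserved uniformly in $\epsilon$. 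The overlaps near the bifurcation values $E_i$ persist, giving item (1).

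For item (2), I would use the variational (Mather/Fathi/Ma\~n\'e) characterization of the Aubry set as the uniqueness set of subsolutions of the Hamilton--Jacobi equation. For $c\in\mathscr L_{\beta_{G_{\epsilon}}}(\nu g)$ the rotation vector of any $c$-minimizing measure is proportional to $g$; by a semi-continuity argument the support of such measures must converge as $\epsilon\to 0$ to the support of $c$-minimizing measures of $H_0$, which lies on the corresponding autonomous cylinder. Since the perturbed cylinder $\Pi^{\epsilon}_{\cdot,\cdot,g}$ is the graph-continuation of the autonomous one and is normally hyperbolic, the Aubry set must lie on it for $\epsilon$ small. The three generic subcases follow by matching the energy interval containing $\alpha_{G_{\epsilon}}(c)$ to the corresponding cylinder, and the bifurcation subcase $\alpha_{G_{\epsilon}}(c)=E_i$ splits into two components because the two coexisting minimizers at $E_i$ each continue onto one of the two adjacent cylinders. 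The most delicate step, and the one I would spend the most effort on, is the construction of the lowest cylinder $\Pi^{\epsilon}_{\epsilon^d,E_0+\delta,g}$: there the ``mechanical'' orbit is long and comes close to the singular point of the Jacobi metric, the hyperbolicity degenerates, and one must carefully balance the loss of hyperbolicity against the size $O(\epsilon^{\sigma})$ of the perturbation to fix the exponent $d$.
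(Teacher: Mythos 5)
Your outline captures the rough skeleton---autonomous truncation, genericity of minimizers, NHIM persistence, Mather-theoretic location of the Aubry set---but it has two structural errors that would make the argument break down as written.

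First, the autonomous truncation $H_0=\frac12\langle Ay,y\rangle-V(x)$ is a valid approximation of $G_{\epsilon}$ only in a \emph{bounded} $y$-region around the double resonance. The top cylinder in the theorem reaches energies of order $\epsilon^{\sigma-1/2}$, hence $|y|\sim\epsilon^{\sigma-1/2}\to\infty$; there the cubic and higher Taylor terms of $h(p''+\sqrt{\epsilon}\,\tilde y)$ are no longer $O(\epsilon^{\sigma})$, so $G_{\epsilon}$ is \emph{not} a small time-periodic perturbation of your single $H_0$, and your Maupertuis--Jacobi rescaling with one fixed $H_0$ does not apply at the high-energy end. The paper instead re-centers the expansion at a chain of points $p'_i$ along the resonant path, producing a family of local truncations $G_{i}=\Omega_iy_1+\frac12\langle Ay,y\rangle-V(x)$ with $\Omega_i\to\infty$, proves normal hyperbolicity \emph{uniformly} in $\Omega_i$ (Theorem 3.1) under a separate open-dense hypothesis on the averaged potential $[V]$ (the class $\mathfrak{V}_{\infty}$), and then glues the overlapping pieces by uniqueness of the NHIC (Section 4). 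Your proposal is missing both the uniformity-in-$\Omega_i$ analysis and the gluing, and it is missing the second genericity condition that the high-energy regime demands.

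Second, the mechanism you invoke at the low-energy end is the opposite of what actually happens. You say the hyperbolicity degenerates as $E\downarrow 0$, but the paper (Lemma 2.2 and Theorem 2.3) shows it gets \emph{stronger}: the Poincar\'e return map expands and contracts transversally by factors $\sim E^{\mp\lambda_2/\lambda_1}$. The real obstruction is the growing return time. To realize the normal hyperbolicity for $\Phi^{s}_{G_0}$ one must take $s\sim|\ln E|$, and over such an interval the $C^1$-difference between $\Phi^{s}_{G_0}$ and $\Phi^{s}_{G_{\epsilon,0}}$ accumulates like $\epsilon^{\sigma}e^{2As}\sim\epsilon^{\sigma-O(d)}$ (Lemma 2.4); this, together with the tangential stretching inside the cylinder of order $E^{\pm(1+\mu)}$, is what forces the constraint $d<\lambda_1\sigma/(12C)$ in Theorem 2.5. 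Balancing ``loss of hyperbolicity'' against $\epsilon^{\sigma}$ would derive the threshold from the wrong quantities. Similarly, for item (2), upper semi-continuity of Ma\~n\'e/Aubry sets is too soft to produce the sharp threshold $\alpha_{G_{\epsilon}}(c)>N\epsilon^{d}$; the paper's Propositions 5.3 and 5.4 rely on quantitative lower bounds for the strict convexity of $\alpha_{G_0}$ along $\Gamma_g$ and on the controlled drift of energy along Aubry orbits (Lemma 5.2), none of which are present in your argument.
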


Back to the original coordinates, this theorem implies the following. Along the resonant path $\Gamma'$, there are some invariant cylinders staying on the energy level $H^{-1}(E)$, which range over the extend from $\epsilon^{\sigma}$-away to $\epsilon^{(1+d)/2}$-close to the double resonant point. By a result in \cite{Lo}, the resonant path $\Gamma'$ is covered by disks $\{|p-p''_i|\le T(p''_i)\epsilon^{\sigma}\}$ where $\sigma=\frac 17$ and $T(p''_i)$ is the period of the double resonance, i.e. it is the smallest one among those integers $K\in\mathbb{Z}$ so that $K\partial h(p''_i)\in\mathbb{Z}^3$. We shall see later, for a generic potential $V$, only finitely many points $\{p''_i\}$ (independent of $\epsilon$) have to be treated as strong double resonance. It follows from Theorem \ref{mainresult} that

\begin{theo}\label{consequence}
Given a resonant path $\Gamma'\subset h^{-1}(E)$ and a generic potential $V$, there are small numbers $\epsilon_0,d>0$ so that for every $\epsilon\in(0,\epsilon_0)$, the whole path $\Gamma'$ is covered by the NHICs of the Hamiltonian flow $\Phi_H^t$, except for the $\sqrt{\epsilon}^{1+d}$-neighborhood of finitely many strong double resonant points, in the sense that the Aubry set lies in the cylinders if its rotation vector is $\sqrt{\epsilon}^{1+d}$-away from those points. The number of the strong double resonant points is independent of $\epsilon$.
\end{theo}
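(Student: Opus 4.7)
The plan is to pull Theorem \ref{mainresult} back to the original $(p,q)$-variables around each double resonance, invoke Lochak's covering of $\Gamma'$, and concatenate the resulting NHICs with those produced by a single-resonance normal form between successive strong double resonance points.

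First I would, around each double resonance point $p''$, reverse the composition of $\Phi_{\epsilon F}$, the homogenization $p-p''=\sqrt{\epsilon}\,\tilde y$, the symplectic change $\mathfrak M=(k'',k',k_3)$, and the reduction to $G_{\epsilon}$. Under this composition the finite chain of cylinders produced by Theorem \ref{mainresult} pulls back to a chain of NHICs of the flow $\Phi_H^{t}$ lying on $H^{-1}(E)$. The innermost end corresponds to $|p-p''|\asymp\sqrt{\epsilon}^{\,1+d}$: the energy scale $\alpha_{G_{\epsilon}}\sim\epsilon^{d}$ forces $|y|\sim\epsilon^{d/2}$ in the rescaled variables and hence $|p-p''|=\sqrt{\epsilon}\,|y|\sim\epsilon^{(1+d)/2}$; the outer end sits at distance of order $\epsilon^{\sigma}$ from $p''$, once the free parameter $\sigma\in(0,\frac12)$ in the size of $\Omega_{\epsilon}$ is chosen to match the Lochak covering.

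Next I would invoke the Lochak covering of $\Gamma'\cap h^{-1}(E)$ by disks $\{|p-p''_i|\le T(p''_i)\epsilon^{1/7}\}$ cited as \cite{Lo}, and split the double resonance points into strong and weak according to the size of the transverse resonance vector $k''_i$. At each weak $p''_i$, where $|k''_i|$ exceeds a $V$-dependent threshold $K_0$, the Fourier coefficients of $P$ containing $k''_i$ are so small, thanks to the $C^{\kappa}$ regularity, that the single-resonance normal form (averaging only the $\langle k',q\rangle$-dependent harmonics and absorbing $\langle k'',q\rangle$ into the remainder) leaves a perturbation for which NHICs persist uniformly across the entire Lochak disk, by a simplified version of the argument in the proof of Theorem \ref{mainresult}. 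Only the strong points, those with $|k''_i|\le K_0$, require the full double-resonance normal form; on the compact portion of $\Gamma'\cap h^{-1}(E)$ of interest, the equations $\langle k'',\partial h(p)\rangle=0$ with $|k''|\le K_0$ and $k''\not\parallel k'$ cut out finitely many candidates, at each of which the generic assumption on $V$ ensures that the normal-form potential $-Z(p'',\cdot)$ lies in the open-dense set $\mathfrak V$ of Theorem \ref{mainresult}. Since only finitely many open-dense conditions are combined, the overall genericity condition on $V$ is again open-dense.

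Finally I would glue. The single-resonance NHICs cover $\Gamma'$ outside the strong-resonance disks of radius $\sim\epsilon^{\sigma}$, while the pullbacks of Theorem \ref{mainresult} cover each such disk down to $\sqrt{\epsilon}^{\,1+d}$-close to $p''_i$. On the annular overlap around each strong $p''_i$, both constructions supply NHICs carrying Aubry sets for the same cohomology classes $c\in\mathscr{L}_{\beta_{G_{\epsilon}}}(\nu g)$ with $\epsilon$-independent normal hyperbolicity rates; uniqueness of the local centre manifold under such rates then forces the two to coincide on the overlap, and the pieces assemble into one global NHIC over $\Gamma'\setminus\bigcup_i\{|p-p''_i|<\sqrt{\epsilon}^{\,1+d}\}$. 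The Aubry-set localisation claim is inherited directly from statement (2) of Theorem \ref{mainresult} inside the strong-resonance regions and from the analogous single-resonance statement outside. The main obstacle I anticipate is this matching step: identifying the NHIC from the single-resonance normal form on the outer annulus with the outermost cylinder of Theorem \ref{mainresult} requires a careful comparison of two distinct coordinate normal forms on a common domain and an appeal to uniqueness of the centre manifold under the prescribed rates. By comparison, the finiteness of the strong double resonance set and its independence of $\epsilon$ follow routinely from the genericity assumption on $V$ together with compactness of the portion of $\Gamma'\cap h^{-1}(E)$ under consideration.
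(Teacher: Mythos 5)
Your overall strategy is the right one and matches the structure of the paper's argument: pull Theorem \ref{mainresult} back through the normal form to the original variables around each strong double resonance, cover $\Gamma'$ by Lochak disks, classify double resonances into strong and weak according to the size of $|k''|$, handle the weak ones by a single-resonance normal form, and close by genericity plus compactness. The Fourier-decay remark you make ("thanks to the $C^{\kappa}$ regularity") is indeed the germ of the paper's quantitative criterion $\|k''\|^{r-2}\ge d(k')d_1(\lambda)^{-1}\|P\|_{C^r}$, and your finiteness argument for the strong set is correct.

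There is, however, a genuine gap in the weak-resonance part that you do not address. When you drop $\langle k'',q\rangle$ into the remainder, the persistence of the single-resonance NHIC through a weak Lochak disk rests on the NHIC of the truncated system $\langle Ay,y\rangle-Z_{k'}(p,x_2)$, and the normal hyperbolicity rate of that cylinder is governed by the second derivative $\lambda=\partial^2_{x_2}Z_{k'}(p,x_2^*(p))$ at the minimum. The threshold $K_0$ for ``weak'' is therefore $\lambda$-dependent, and because double resonant points form a \emph{dense} subset of the compact curve $\Gamma'$, you need $\lambda$ bounded away from zero \emph{simultaneously at all of them}. This is not a routine consequence of one-point genericity: degeneracy of the minimum of $Z_{k'}(p,\cdot)$ at even one $p\in\Gamma'$ destroys the uniformity of your threshold. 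The paper handles this by invoking Theorem \ref{singlecylinder}, a parametric genericity statement guaranteeing that along a Lipschitz family $F_{\zeta}$ the minimizer is nondegenerate simultaneously for all $\zeta$, for $V$ in an open-dense set. Without this (or an equivalent parametric transversality argument) your threshold $K_0$ cannot be chosen independently of position on $\Gamma'$, and the set of ``strong'' points you cannot control could be infinite. A secondary remark: the matching between the single-resonance cylinders and the outer ends of the double-resonance cylinders, which you flag as the main obstacle, is already handled in Sections 3--4 of the paper (the transition Theorem \ref{LmHighEnergy} shows that for large $\Omega_i$ the double-resonance picture degenerates to the single-resonance one, and the cylinders for adjacent disks are identified by uniqueness of the normally hyperbolic cylinder), so while your appeal to uniqueness of the center manifold is the right instinct, it is not where the new difficulty lies.
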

The result in \cite{CZ1} plays important role in this paper. It is for the minimal periodic orbit of Tonelli Lagrangian of two degrees of freedom. Let $L$ be a Tonelli Lagrangian and let $\mathfrak{M}(L)$ be the set of Borel probability measures on $T\mathbb{T}^2$, which are invariant for the Lagrange flow $\phi_L^t$ produced by $L$. Each $\mu\in\mathfrak{M}(L)$  is associated with a rotation vector $\rho(\mu)\in H_1(\mathbb{T}^2,\mathbb{R})$ s.t. for every closed 1-form $\eta$ on $\mathbb{T}^2$ one has
$$
\langle[\eta],\rho(\mu)\rangle=\int\eta d\mu.
$$
Let $\mathfrak{M}_{\omega}(L)=\{\mu\in\mathfrak{M}(L):\rho(\mu)=\omega\}$, an invariant measure $\mu$ is called minimal with the rotation vector $\omega$ if
$$
\int Ld\mu=\inf_{\nu\in\mathfrak{M}_{\omega}(L)}\int Ld\nu.
$$
A rotation vector $\omega\in H_1(\mathbb{T}^2,\mathbb{R})$ is called resonant if there exists a non-zero integer vector $k\in\mathbb{Z}^2$ such that $\langle\omega,k\rangle=0$. For two-dimensional torus, it uniquely determines an irreducible element $g\in H_1(\mathbb{T}^2,\mathbb{Z})$ and a positive number $\lambda>0$ such that $\omega=\lambda g$ if $\omega$ is resonant. Each orbit in the support of minimal measure $\mu$ is periodic if and only if $\rho(\mu)$ is resonant. Let $E=\alpha(\mathscr{L}_{\beta_{L}}(\lambda g))$, such periodic orbit is called $(E,g)$-{\it minimal}. The following result has been proved (Theorem 2.1 of \cite{CZ1}):
\begin{theo}\label{mainth}
Given a class $g\in H_1(\mathbb{T}^2,\mathbb{Z})$ and two positive numbers $E''>E'>0$, there exists an open-dense set $\mathfrak{V}\subset C^r(\mathbb{T}^2,\mathbb{R})$ with $r\ge5$ such that for each $V\in\mathfrak{V}$, it holds simultaneously for all $E\in[E',E'']$ that every $(E,g)$-minimal periodic orbit of $L+V$ is hyperbolic. Indeed, except for finitely many $E_i\in[E',E'']$, there is only one $(E,g)$-minimal orbit for $E\ne E_i$ and there are two $(E,g)$-minimal orbits for $E=E_i$. Therefore, these $(E,g)$-minimal periodic orbits make up finitely many pieces of NHICs.
\end{theo}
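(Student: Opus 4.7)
My plan is to recast the $(E,g)$-minimal periodic orbit problem as a question about shortest closed geodesics in a one-parameter family of Riemannian metrics on $\mathbb{T}^2$, and then perform two compatible transversality arguments on the potential $V$. For the Tonelli Lagrangian $L+V$ at Lagrange energy $E$, the Maupertuis--Jacobi principle produces a smooth metric $g_{E,V}$ on $\mathbb{T}^2$ whose unit-speed closed geodesics in homology class $g$ are exactly the time-reparametrizations of $(E,g)$-minimal orbits. Existence of a shortest such geodesic is classical (Hedlund, Morse), and the length $\ell(V,E):=\min\{\mathrm{length}_{g_{E,V}}(\gamma):[\gamma]=g\}$ is continuous and weakly concave in $E$. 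A priori length bounds over the compact interval $[E',E'']$ reduce the collection of candidate minimizers to a precompact family covering finitely many free-homotopy classes.

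Because $\mathbb{T}^2$ is a surface, the monodromy of any closed geodesic is a $2\times 2$ symplectic matrix, lying in one of the strata hyperbolic, parabolic, or elliptic. A length-minimizer has zero Morse index (no interior conjugate points), which rules out the interior of the elliptic stratum and forces its monodromy into the closure of the hyperbolic stratum. Hence proving hyperbolicity of every minimizer reduces to excluding parabolic monodromies, a codimension-one condition in $\mathrm{Sp}(2,\mathbb{R})$.

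The core of the argument is a two-stage construction of $\mathfrak{V}$. First, a Sard--Smale argument on $(V,E)\in C^r(\mathbb{T}^2,\mathbb{R})\times[E',E'']$ shows that for $V$ in an open-dense set the length functions $E\mapsto \mathrm{length}_{g_{E,V}}(\gamma_j)$ of any two distinct candidate minimizers cross transversally, so that coincidence energies in $[E',E'']$ form a finite set $\{E_i\}$ at each of which exactly two orbits tie. Second, with this coincidence structure now rigid under small perturbations, an additional $C^r$-small perturbation of $V$ supported \emph{off} the (finitely many) projected minimizer curves pushes each parabolic monodromy into the hyperbolic stratum without reviving ties. Openness of the resulting $\mathfrak{V}$ follows from continuous dependence of hyperbolic monodromies and of transverse length crossings; density is immediate from the construction.

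The final step is assembly: on each subinterval $(E_{i-1},E_i)$ the unique hyperbolic $(E,g)$-minimizer $\gamma_E$ depends $C^{r-2}$-smoothly on $E$ by the implicit function theorem applied to the fixed-energy Euler--Lagrange system, so $\bigcup_E \gamma_E$ is a smooth invariant cylinder; uniform hyperbolicity over the closed subinterval upgrades to normal hyperbolicity, producing the finitely many NHICs in the statement. I expect the main obstacle to be the compatibility of the two transversality stages: a perturbation killing a parabolic monodromy could in principle recreate a length coincidence. The resolution is strict ordering of the two perturbations, exploiting the fact that the second one is supported disjointly from all currently competing minimizers, so it leaves the lengths of the finite candidate family essentially untouched while acting on the normal linearization to shift the monodromy trace off $\pm 2$.
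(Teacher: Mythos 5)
The theorem is quoted by the paper from \cite{CZ1} without a proof, and the argument there (which one can see mirrored in the paper's own use of the result, e.g.\ the proof of Theorem~\ref{LmHighEnergy}) is a barrier-function argument: one shows that the minimizer of the auxiliary function $F(\cdot,E,g)$ from (\ref{functionofaction}) has a non-degenerate minimum, invokes the semi-concavity of the barrier $B_E=u^- - u^+$ to bound $B_E$ above by a quadratic, and derives a contradiction unless the periodic orbit is hyperbolic with a quantitatively uniform rate. Your Maupertuis/closed-geodesic route is a genuinely different strategy, and conceptually attractive because it converts the fixed-energy problem into a clean Riemannian minimization and reduces hyperbolicity to excluding parabolic monodromies. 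But as written it has three serious gaps. First, the object you want to perturb away from is not a finite collection of curves: as $E$ ranges over $[E',E'']$ the minimizers sweep out a two-dimensional region of $\mathbb{T}^2$, so ``supported off the (finitely many) projected minimizer curves'' is not a coherent restriction, and in any case a perturbation of $V$ that vanishes on a neighborhood of the orbit $\gamma_{E_0}$ leaves both $\gamma_{E_0}$ and its linearized Poincar\'e map untouched --- the monodromy is driven by $\partial^2 V$ restricted to $\gamma_{E_0}$, which your bump function does not see. So the second-stage perturbation, as described, cannot shift a parabolic trace off $\pm 2$ while leaving the orbit invariant. Second, the Sard--Smale step only yields that, for generic $V$, the set of energies $E$ at which the minimizer is parabolic has measure zero in $[E',E'']$; you need it \emph{empty}. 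Passing from measure zero to empty is precisely the hard point in any ``simultaneous for a continuum of parameters'' genericity statement and requires showing the parabolic locus has codimension at least two in $(V,E)$ (e.g.\ by exploiting that minimality already forces $|\mathrm{tr}|\ge 2$, so the bad set sits at a tangential boundary and a generic $V$ should detach it); your proposal asserts the conclusion without this dimension count. Third, openness of $\mathfrak{V}$ does not follow merely from ``continuous dependence of hyperbolic monodromies'': you need a uniform hyperbolicity lower bound over the whole interval of energies before a $C^r$-small change of $V$ can be absorbed, and producing such a bound is exactly what the barrier-function approach is designed to deliver and your scheme does not. A secondary caveat: the Jacobi-metric reduction is clean only for mechanical $L$; CZ1's statement is for Tonelli $L$, which would require the Finsler (free-time action) variant, although in this paper's application $L_{G_0}$ is indeed mechanical so this is not fatal here.
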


\section{NHIC around double resonant point}
\setcounter{equation}{0}
The main result in this section is Theorem \ref{cylinderforepsilon}. It verifies that a NHIC for the map $\Phi_{G_{\epsilon}}^{2\pi}$ extends from $\epsilon^{d}$-neighborhood of the double resonant point to a place which is of order $O(1)$-away from the double resonant point.

As the first step we need to find the explicit expression of $G_{\epsilon}$. We do not try to find the one which is valid for the whole region $\Omega_{\epsilon}$. Instead we are satisfied with getting a local expression when it is restricted on $\{|p-p'_i|\le O(\sqrt{\epsilon})\}$ where $p'_i\in\Gamma'\cap\{|p-p''|\le D\epsilon^{\sigma}\}$. Along the path $\Gamma'\cap\{|p-p''|\le\epsilon^{\sigma}\}$ we choose points $\{p'_i\}$ such that $p'_0=p''$, $\partial_1 h(p'_i)=Ki\sqrt{\epsilon}$, where $K>0$ is an integer, independent of $\epsilon$. Let $\omega_{3,i}=\partial_3h(p'_i)$, we introduce coordinate rescaling and translation
\begin{equation}\label{energylevel}
\Big(y,\frac{\sqrt{\epsilon}}{\omega_{3,i}}I\Big)=\frac{1}{\sqrt{\epsilon}}(p-p'_i),\qquad \theta=\frac{\sqrt{\epsilon}}{\omega_{3,i}}x_3.
\end{equation}
Let $Ki=\Omega_i$, we expand $\tilde G_{\epsilon}$ in $O(\sqrt{\epsilon})$ neighborhood of $p'_i$ and get a local expression
\begin{equation}\label{local}
\begin{aligned}
\tilde G_{\epsilon}=&I+\Omega_iy_1+\frac 12\Big\langle\tilde A_i\Big(y,\frac{\sqrt{\epsilon}}{\omega_{3,i}} I\Big),\Big(y,\frac{\sqrt{\epsilon}}{\omega_{3,i}}I\Big)\Big\rangle\\
&-V(x)+\sqrt{\epsilon}\tilde R_h\Big(y,\frac{\sqrt{\epsilon}}{\omega_{3,i}} I\Big)+\sqrt{\epsilon}\tilde R_{\epsilon}\Big(x,\frac{\omega_{3,i}}{\sqrt{\epsilon}}\theta, \Big(\sqrt{\epsilon}y,\frac{\epsilon}{\omega_{3,i}}I\Big)+p'_i\Big)
\end{aligned}
\end{equation}
where $\tilde A_i=\frac{\partial^2h}{\partial p^2}(p'_i)$ and term $\tilde R_h$ represents the following
$$
\frac 1{\sqrt{\epsilon}^{3}}\left[h\Big(p'_i+\Big(\sqrt{\epsilon}y,\frac{\epsilon}{\omega_{3,i}} I\Big)\Big)-\Big[h(p'_i)+I+\Omega_iy_1+\frac 12\Big\langle\tilde A_i\Big(y,\frac{\sqrt{\epsilon}}{\omega_{3,i}} I\Big),\Big(y,\frac{\sqrt{\epsilon}}{\omega_{3,i}}I\Big)\Big\rangle\Big]\right].
$$
For $|p'_i|\le O(\epsilon^{\sigma})$ and $|y|\le O(1)$ and $|I|\le O(\frac{\omega_3}{\sqrt{\epsilon}})$ both $\sqrt{\epsilon}\tilde R_h$ and $\sqrt{\epsilon}\tilde R_{\epsilon}$ are bounded by a quantity of order $O(\epsilon^{\sigma})$ in $C^{\ell-2}$-topology. From the expression of $\tilde G_{\epsilon}$ in (\ref{local}) we get a local solution of the equation (cf. Equation \ref{reduction})
\begin{equation}\label{reduc}
\tilde G_{\epsilon}\Big(x,\frac{\omega_{3,i}}{\sqrt{\epsilon}}\theta,y,-\frac{\sqrt{\epsilon}}{\omega_{3,i}} G_{\epsilon,i}(x,y,\theta)\Big)=0,
\end{equation}
which takes the form of $G_{\epsilon,i}(x,y,\theta)=G_i(x,y)+\epsilon^{\sigma}R_{\epsilon,i}(x,y,\theta)$ where
\begin{equation}\label{2dHmil}
G_{i}(x,y)=\Omega_iy_1+\frac 12\langle Ay,y\rangle-V(x).
\end{equation}
where $A$ is obtained from $\tilde A_0$ by eliminating the third row and the third column. At first view, the matrix $A$ should come from $\tilde A_i$ in the same way. However, we use the property
$|p'_i-p''|\le O(\epsilon^{\sigma})$ so that we can put the difference term into the remainder.

In this section, we study the case when $i=0$, i.e. the system is restricted in $K\sqrt{\epsilon}$-neighborhood of the double resonant point. If we ignore the small term $\epsilon^{\sigma}R_{\epsilon,0}$ in the equation (\ref{2dHmil}), the truncated system $G_0$ has two degrees of freedom only. Let $L_{G_0}$ be the Lagrangian determined by the truncated system, we get periodic orbit with rotation vector $\lambda g$ by searching for the minimizer $\gamma(\cdot,E,g,x)$ of the Lagrange action
\begin{equation}\label{functionofaction}
F(x,E,g)=\inf_{\stackrel {\gamma(0)=\gamma(\frac {2\pi}{\lambda})=x}{\scriptscriptstyle [\gamma]=g}}\int_{0}^{\frac {2\pi}{\nu}}L_{G_0}(\gamma(t),\dot\gamma(t))dt,
\end{equation}
where $g\in H_1(\mathbb{T}^2,\mathbb{Z})$ is irreducible, $E=\alpha(\mathscr{L}_{\beta_L}(\lambda g))$. If $F(\cdot,E,g)$ reaches its minimum at $x^*$, then $d\gamma(\cdot,E,g,x^*)$ is the minimal periodic orbit we are looking for \cite{CZ1}.

The minimizer $\gamma(\cdot,E,g,x^*)$ produces a periodic orbit $z_{E,g}(t)=(x_{E,g}(t),y_{E,g}(t))$ of $\Phi^{t}_{G_0}$, where $x_{E,g}(t)=\gamma(t,E,g,x^*)$. As the Hamiltonian is autonomous, the orbit $z_{E,g}(t)$ stays in certain energy level $G^{-1}_0(E)$. When the energy $E$ decreases, $\lambda$ also decrease. We assume $\min V=0$, then there are two possibilities:

(1), $\lambda\downarrow\lambda_0>0$ as $E\downarrow 0$. In this case, certain periodic orbit $z^*(t)\subset G^{-1}_0(0)$ such that $z_{E,g}(t)\to z^*(t)$. It is possible, we have an example. Let
$$
L=\frac 12\dot x_1^2+\frac {1}2\dot x_2^2+V(x)
$$
where $V$ satisfies the conditions: $x=0$ is the minimal point of $V$ only; there exist two numbers $d>d'>0$ such that for any closed curve $\gamma$: $[0,1]\to\mathbb{T}^2$ passing through the origin with $[\gamma]\ne 0$ one has
$$
\int_0^1V(\gamma(s))ds\ge d;
$$
$V=d'+(x_2-a)^2$ when it is restricted a neighborhood of circle $x_2=a$. For $g=(1,0)$, $\lambda\downarrow \sqrt{2d'}$ as $E\downarrow0$, $z^*(t)=(\frac t{\sqrt{2d'}},a)$. No problem of double resonance in this case.

(2), $\lambda\downarrow 0$ as $E\downarrow 0$. It is typical that $V$ attains its minimum at one point which correspond a fixed point of the Hamiltonian flow $\Phi_{G_0}^t$. As the period $2\lambda^{-1}\pi$ approaches infinity, the orbit $z_{E,g}(t)$ approaches homoclinic orbit(s) as $E$ approaches zero. It is possible that there are two irreducible classes $g_1,g_2\in H_1(\mathbb{T}^2,\mathbb{Z})$ and two non-negative integers $k_1,k_2$ such that $g=k_1g_1+k_2g_2$. It is a difficult part of the problem of double resonance, $\{z_{E,g}\}_{E>0}$ makes up a cylinder which takes homoclinic orbits as it boundary, which can not survive any small perturbation. In this section, we are going to study how close some invariant cylinder of $\Phi^{2\pi}_{G_{\epsilon,0}}$ can extend to.

\subsection{Hyperbolicity of minimal periodic orbit around double resonant point} At the double resonant point we have $p'=p''$ and $\omega'_1=0$. In this case,
\begin{equation}\label{Hamilton2}
G_{\epsilon,0}(x,y,\theta)=\frac 12\langle Ay,y\rangle-V(x)+\epsilon^{\sigma}R_{\epsilon,0}(x,y,\theta).
\end{equation}
The Lagrangian determined by $G_0$ takes the form
$$
L_{G_0}=\frac 12\langle A^{-1}\dot x,\dot x\rangle+V(x).
$$
For Hamiltonian system $G_0$, the minimal point of $V$ determines a stationary solution which corresponds to a minimal measure of the Lagrangian $L_{G_0}$. Up to a translation of coordinates $x\to x_0$, it is open-dense condition that

({\bf H1}): {\it $V$ attains its maximum at $x=0$ only, the Hessian matrix of $V$ at $x=0$ is positive definite. All eigenvalues of the matrix
$$
\left (\begin{matrix}0 & A\\
\partial^2_xV & 0
\end{matrix}\right )
$$
are different: $-\lambda_2<-\lambda_1<0<\lambda_1<\lambda_2$}.

If we denote by $\Lambda^+_i=(\Lambda_{xi},\Lambda_{yi})$ the eigenvector corresponding to the eigenvalue $\lambda_i$, where $\Lambda_{xi}$ and $\Lambda_{yi}$ are for the $x$- and $y$-coordinate respectively, then the eigenvector for $-\lambda_i$ will be  $\Lambda^-_i=(\Lambda_{xi},-\Lambda_{yi})$.

By the assumption ({\bf H1}), the fixed point $z=(x,y)=0$ has its stable manifold $W^+$ and its unstable manifold $W^-$. They intersect each other along homoclinic orbit. As each homoclinic orbit entirely stays in the stable as well as in the unstable manifolds, the intersection can not be transversal in the standard definition, but in the sense that
$$
T_xW^-\oplus T_xW^+=T_xG_0^{-1}(0)
$$
holds for all $x$ along minimal homoclinic curve. Without danger of confusion, we also call the intersection transversal.

Treat it as a closed curve, a homoclinic orbit $(\gamma(t),\dot\gamma(t))$ is associated with a homological class $[\gamma]=g\in H_1(\mathbb{T}^2,\mathbb{Z})$. A homoclinic orbit $(\gamma,\dot\gamma)$ is called {\it minimal} if $$
\int_{-\infty}^{\infty}L_{G_0}(\gamma(t),\dot\gamma(t))dt= \inf_{[\zeta]=[\gamma]}\int_{-\infty}^{\infty} L_{G_0} (\zeta(t),\dot\zeta(t))dt.
$$
Although there are infinitely many homoclinic orbits \cite{Z2,CC}, it is generic that there is only one minimal homoclinic orbit for each class in $g\in H_1(\mathbb{T}^2,\mathbb{Z})$. As there are countably many homological classes only, the following hypothesis is also generic:

({\bf H2}): {\it The stable manifold intersects the unstable manifold transversally along each minimal homoclinic orbit. These minimal homoclinic orbits approach the fixed point along the direction $\Lambda_1$: $\dot\gamma(t)/\|\dot\gamma(t)\| \to\Lambda_{x1}$ as $t\to\pm\infty$.}

Once fixing the homological class $g$, we denote the periodic curve $x_{E}(t)=x_{E,g}(t)$ which determines a periodic orbit $z_{E}=(x_{E},y_{E})$ in the phase space. As it stays in the energy level set $ G^{-1}_0(E)$, let us show how the period $T_{E}$ is related to the energy $E$.
\begin{lem}
Assume the hypothesis $(${\bf H2}$)$. For $g=k_1g_1+k_2g_2$ and suitably small $E>0$, the period $T_{E}$ of the orbit $z_{E}$ is related to the energy $E$ through the formula
\begin{equation}\label{period}
T_{E}=T(E,g)=\tau_{E,g}-\frac 1{\lambda_1}(k_1+k_2)\ln E
\end{equation}
where $\tau_{E,g}$ is bounded as $E\downarrow 0$.
\end{lem}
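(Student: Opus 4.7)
The plan is to split $z_E$ into segments inside and outside a fixed small ball $B_r$ around the hyperbolic fixed point $0$, then estimate each type of segment separately. By hypothesis (H2) and a standard $\lambda$-lemma argument, the minimal $(E,g)$-periodic orbit with $g = k_1 g_1 + k_2 g_2$ converges, in the Hausdorff topology as $E \downarrow 0$, to the concatenation of $k_1$ copies of the minimal $g_1$-homoclinic and $k_2$ copies of the minimal $g_2$-homoclinic. Hence for sufficiently small $E$, $z_E \cap B_r$ consists of exactly $k_1+k_2$ connected \emph{passage} arcs and $z_E \setminus B_r$ of the complementary \emph{transit} arcs. On each transit arc the Hamiltonian vector field is bounded below in a compact set away from $0$, so its traversal time is uniformly bounded and converges to the finite time the limiting homoclinic needs to move from $\partial B_r$ back to $\partial B_r$; the sum of these $k_1+k_2$ transit times is absorbed into $\tau_{E,g}$.

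The bulk of the work is estimating a single passage. Using a symplectic Moser--Birkhoff normal form at $0$, I would bring $G_0$ to the form $\lambda_1\xi_1\eta_1 + \lambda_2\xi_2\eta_2 + R(\xi,\eta)$ with $R = O((|\xi||\eta|)^2)$, so that each $I_i := \xi_i\eta_i$ is an approximate integral along orbits crossing $B_r$. Parametrizing the passage so that the orbit enters at $|\eta_1|\asymp r$ and exits at $|\xi_1|\asymp r$, hypothesis (H2) (the approach direction is $\Lambda_1$, not $\Lambda_2$) gives $|I_2/I_1|\to 0$ along $z_E$, which combined with energy conservation $E = \lambda_1 I_1 + \lambda_2 I_2 + O(|I|^2)$ pins down $I_1 = E/\lambda_1 + o(E)$. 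Integrating $\dot\xi_1 = \lambda_1 \xi_1(1+O(|I|))$ from $\xi_1(0) = I_1/|\eta_1(0)| \asymp E/(r\lambda_1)$ up to $\xi_1 = r$ yields passage time $-\lambda_1^{-1}\ln E + O(1)$, and summing over the $k_1+k_2$ passages produces the logarithmic term in (\ref{period}), with the $O(1)$ residue folded into $\tau_{E,g}$.

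The main technical obstacle is justifying the normal form in the presence of possible resonances $n_1\lambda_1 = n_2\lambda_2$ between the hyperbolic eigenvalues; only finite-order non-resonance is needed to bring $G_0$ into the required pre-integrable form, and if such a resonance occurs one can instead reduce to the two-dimensional energy-level dynamics on $G_0^{-1}(E)$ and run the same logarithmic-time integration using the linearization's products $\xi_i\eta_i$ as approximate integrals modulo a controllable perturbation. A secondary point is to verify that the $O(1)$ errors accumulated over the $k_1+k_2$ passages stay uniformly bounded in $E$; this follows because (H2) makes each passage an isolated transverse crossing, along which the normal-form coordinates and the initial/final boundary data depend continuously on $E$ all the way down to $E=0$, so $\tau_{E,g}$ is not only bounded but in fact has a finite limit as $E \downarrow 0$.
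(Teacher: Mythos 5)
Your overall architecture coincides with the paper's: decompose $z_E$ into transit arcs outside a small ball $B_\delta$ around the hyperbolic fixed point (each of uniformly bounded time) and $k_1+k_2$ passage arcs through $B_\delta$, then estimate a single passage time from a local normalization near $0$ plus the energy constraint $G_0=E$. Where you diverge is in the local normalization. The paper only diagonalizes the quadratic part symplectically, writing $G_0 = \frac{1}{2}(y_1^2-\lambda_1^2 x_1^2) + \frac{1}{2}(y_2^2 - \lambda_2^2 x_2^2) + P_3$ with $P_3 = O(|(x,y)|^3)$, solves the flow by variation of constants, and uses Grobman--Hartman on $\partial B_\delta$ together with (H2) (which bounds the first coordinate on $\partial B_\delta$ away from zero) to obtain two-sided bounds on the constants $b_i^\pm$; substituting these into the constraint $G_0=E$ gives $E \asymp \delta^2 e^{-\lambda_1|t^-_{E,i}-t^+_{E,i}|}$ directly. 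You instead bring $G_0$ to a Moser-type integrable normal form $\lambda_1\xi_1\eta_1 + \lambda_2\xi_2\eta_2 + R$ with $I_i=\xi_i\eta_i$ approximate invariants, then read off the passage time from $I_1 = E/\lambda_1 + o(E)$.

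Two steps in your route are genuine gaps. First, the reduction to a normal form whose tail depends only on $I_1,I_2$ requires $\lambda_1,\lambda_2$ to be rationally independent; hypothesis (H1) only requires the eigenvalues to be distinct, so a resonance $p\lambda_1 = q\lambda_2$ is allowed and introduces cross-resonant monomials (e.g.\ $\eta_1^2\xi_2$ and $\xi_1^2\eta_2$ when $\lambda_2=2\lambda_1$) that are not functions of $I_1,I_2$ and do contribute to $\dot I_1$. You would need to show these do not corrupt $I_1 = E/\lambda_1 + o(E)$; your fallback to ``two-dimensional energy-level dynamics'' is not developed. Second, $G_0$ is only $C^{\kappa-2}$ with $\kappa\ge 6$, not analytic, so only a finite-order normal form is available and each elimination step costs derivatives; after truncation you must control the cumulative drift of $I_1$ along the orbit over a passage time $\sim -\lambda_1^{-1}\ln E$ that diverges, and this is precisely the delicate estimate that the paper's variation-of-constants trick settles without invoking any resonance elimination. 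By contrast, the paper's argument is completely indifferent to whether $\lambda_2/\lambda_1$ is rational and needs nothing beyond the linear symplectic diagonalization of the Hessian. A smaller point: you assert that $\tau_{E,g}$ has a finite limit as $E\downarrow 0$; the lemma asserts, and the paper proves, only that it is bounded, and a finite limit does not obviously follow from the paper's two-sided estimate on $\tau_{E,i}$.
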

\begin{proof}
By the condition, there are two homoclinic orbits $d\gamma_1(t),d\gamma_2(t)$ such that $[\gamma_1]=g_1$, $[\gamma_2]=g_2$ and the periodic orbit $z_{E}(t)$ approaches them as $E\downarrow 0$. By the hypotheses $(${\bf H1,H2}$)$, these minimal homoclinic orbits approach the fixed point $z=0$ along the direction $\Lambda_1^{\pm}$.

Let $B_{\delta}$ be a sphere centered at $z=0$ with small radius $\delta>0$. Since $z_{E}$ approaches the homoclinic orbits, it passes through the ball if $E>0$ is small. Denote by $t^+_{E,i}$ the time when $z_{E}$ enters the ball, $t^-_{E,i}$ the subsequent time when $z_{E}$ leaves the ball, namely, $z_{E}(t)\in B_{\delta}$ for $t\in [t^+_{E,i},t^-_{E,i}]$ and $z_{E}(t)\notin B_{\delta}$ for $t\in(t^-_{E,i},t^+_{E,i+1})$. Since $g=k_1g_1+k_2g_2$, we have
$$
t^+_{E,1}<t^-_{E,1}<t^+_{E,2}<\cdots<t^+_{E,k_1+k_2}<t^-_{E,k_1+k_2}<t^+_{E,k_1+k_2+1}= t^+_{E,1}+T_{E}.
$$
We use the notation $z_{E}(t)=(x_{E}(t),y_{E}(t))$. As the minimal homoclinic orbits approach the fixed point $z=0$ along the direction $\Lambda_1^{\pm}$.
\begin{equation}\label{regularenergyeq1}
\Big\|\frac{\dot x_{E}(t_{E,i}^{\pm})}{\|\dot x_{E}(t_{E,i}^{\pm})\|}-\frac{\Lambda_{x1}}{\|\Lambda_{x1}\|} \Big\|<\frac 14
\end{equation}
holds if $E>0,\delta>0$ are suitably small.

In a suitably small neighborhood of $z=0$, we use a Birkhoff normal form
$$
G_0=\frac 12(y_1^2-\lambda_1^2x_1^2)+\frac 12(y_2^2-\lambda_2^2x_2^2)+P_3(x,y)
$$
where $P_3(x,y)=O(|(x,y)|^3)$. In such coordinates, the eigenvector for the eigenvalue $\pm\lambda_1$ is $\Lambda_1^{\pm}=(1,0,\pm\lambda_1,0)$ and that for the eigenvalue $\pm\lambda_2$ reads $\Lambda_2^{\pm}=(0,1,0,\pm\lambda_2)$.

By the method of variation of constants, we obtain the solution of the Hamilton equation generated by $G_0$
\begin{equation}\label{flateq5}
\begin{aligned}
x_i(t)=&e^{-\lambda_it}(b_{i}^{-}+F_i^-)+e^{\lambda_it}(b_{i}^{+}+F_i^+), \\
y_i(t)=&-\lambda_ie^{-\lambda_it}(b_{i}^{-}+F_i^-)+\lambda_ie^{\lambda_it}(b_{i}^{+}+F_i^+),
\end{aligned}
\end{equation}
where $b_i^{\pm}$ are constants determined by boundary condition and
\begin{align*}
F_i^-=&\frac{1}{2\lambda_i} \int_0^te^{\lambda_is}(\lambda_i\partial_{y_i}P_3+\partial_{x_i}P_3)(x(s),y(s))ds, \\
F_i^+=&\frac{1}{2\lambda_i} \int_0^te^{-\lambda_is}(\lambda_i\partial_{y_i}P_3-\partial_{x_i}P_3) (x(s),y(s))ds.
\end{align*}
Substituting $(x,y)$ with the formula (\ref{flateq5}) into $G_0$ we obtain a constraint condition for the constants $b_i^{\pm}$:
\begin{equation}\label{flateq6}
G_0(x(t),y(t))=-2(\lambda_1^2b_1^-b_1^++\lambda_2^2b_2^-b_2^+)+P_3((b^+_i+b^-_i),\lambda_i(b^+_i-b^-_i)).
\end{equation}
If $(x(\pm T),y(\pm T))\in\partial B_{\delta}$, we obtain from the theorem of Grobman-Hartman that
\begin{equation}\label{flateq7.1}
\begin{aligned}
x_i(-T)=&b_i^-e^{\lambda_iT}+b_i^+e^{-\lambda_iT}+o(\delta),\\
x_i(T)=&b_i^-e^{-\lambda_iT}+b_i^+e^{\lambda_iT}+o(\delta).
\end{aligned}
\end{equation}
As Formula (\ref{regularenergyeq1}) holds for $(x_{E},y_{E})$, the first component of $x_{E}(t)=(x_{E,1}(t),x_{E,2}(t))$ satisfies
\begin{equation}\label{flateq7}
|x_{E,1}(t^{\pm}_{E,i})|\ge\frac{\delta}{2\sqrt{1+\lambda_1^2}}, \qquad i=1,\cdots,k_1+k_2.
\end{equation}
Let $2T=t^-_{E,i}-t^+_{E,i}$. The time translation, $t^+_{E,i}\to -T$ induces $t^-_{E,i}\to T$. For sufficiently large $T>0$, it deduces from the equation (\ref{flateq7.1}) and the assumption (\ref{flateq7}) that
$$
\frac{\delta}{3\sqrt{1+\lambda_1^2}} e^{-\lambda_1T}\le |b_1^{\pm}|\le 2e^{-\lambda_1T},\qquad |b_2^{\pm}|\le 2\delta e^{-\lambda_2T},
$$
and
$$
b_1^-b_1^+<0,\qquad |P_3((b^+_j+b^-_j),\lambda_j(b^+_j-b^-_j))|\le Ce^{-3\lambda_1T},
$$
where the constant $C$ depends only on the function $P_3$. So, for suitably small ${\delta}>0$  and sufficiently large $|t_{E,i}^--t_{E,i}^+|$,  we obtain from (\ref{flateq6}) that
\begin{align*}
E\ge&\frac{2\lambda_1^2{\delta}^2}{9(1+\lambda_1^2)} e^{-\lambda_1|t_{E,i}^--t_{E,i}^+|}-8 \lambda_2^2{\delta}^2e^{-\lambda_2|t_{E,i}^--t_{E,i}^+|}-Ce^{-3\lambda_1|t_{E,i}^--t_{E,i}^+|/2}\notag\\
\ge&\frac{\lambda_1^2{\delta}^2}{9(1+\lambda_1^2)}e^{-\lambda_1|t_{E,i}^--t_{E,i}^+|}
\end{align*}
The quantity $|t_{E,i}^--t_{E,i}^+|$ becomes sufficiently large if $E>0$ is sufficiently small.
On the other hand, $E$ is obviously upper bounded by
\begin{align*}
E
\le&8\lambda_1^2{\delta}^2e^{-\lambda_1|t_{E,i}^--t_{E,i}^+|}+8 \lambda_2^2{\delta}^2e^{-\lambda_2|t_{E,i}^--t_{E,i}^+|}+Ce^{-3\lambda_1|t_{E,i}^--t_{E,i}^+|/2}\notag\\
\le&9\lambda_1^2{\delta}^2e^{-\lambda_1|t_{E,i}^--t_{E,i}^+|}.
\end{align*}
Therefore, we find the dependence of speed on the energy
\begin{equation}\label{regularenergyeq2}
|t_{E,i}^--t_{E,i}^+|=\frac 1{\lambda_1}|\ln E|-\frac 2{\lambda_1}|\ln \delta|+\tau_{E,i}
\end{equation}
where $\tau_{E,i}$ is uniformly bounded for each $i\le k_1+k_2$:
$$
\frac 1{\lambda_1}\Big(2\ln\lambda_1+\ln \frac{1}{9(1+\lambda_1^2)}\Big)\le\tau_{E,i}\le \frac 1{\lambda_1}(2\ln\lambda_1+3\ln 3).
$$
For $t\in (t^-_{E,i},t^{+}_{E,i+1})$, the point $z_{E}(t)$ does not fall into the ball $B_{\delta}$. So, the quantity $t^{+}_{E,i+1}-t^-_{E,i}$ is uniformly bounded as $E\downarrow 0$. Set
$$
\tau_{E,g}=\sum_{i=1}^{k_1+k_2}\tau_{E,i}+(t^{+}_{E,i+1}-t^-_{E,i}),
$$
we obtain the formula (\ref{period}).
\end{proof}

We are going to study the hyperbolicity of the periodic orbit $z_{E}(t)$. As the Hamilton flow $\Phi_{G_0}^t$ preserves the energy, we take a two-dimensional section $\Sigma_E\subset G_0^{-1}(E)$, which is transversal to the periodic orbit $z_{E}(t)$ at $z_{E,0}$ in the sense that
$$
T_{z_{E,0}}G_0^{-1}(E)=\mathrm{span}\{J\nabla G_0(z_{E,0}),T\Sigma_E\}.
$$
The Hamiltonian flow produces a Poincar\'e map, for which $z_{E,0}$ is periodic point (the orbit may intersect the section at several points). We study the hyperbolicity of periodic point for the Poincar\'e map. If a periodic orbit is associated with the homological class $g$ and it stays in the energy level of $E$, we call it $(E,g)$-periodic orbit.

\begin{lem}\label{cylinderlem1}
We assume the hypotheses \text{\rm ({\bf H1})} and \text{\rm ({\bf H2})}. Given a class $g\in H_1(\mathbb{T}^2,\mathbb{Z})$, we assume that, as $E \downarrow 0$, there is $(E,g)$-periodic orbit $z_{E}(t)$ which approaches two minimal homoclinic orbits $d\gamma_1$ and $d\gamma_2$ so that $g=k_1[\gamma_1]+k_2[\gamma_2]$. Then, there exists small $E'>0$ such that for each $E\in (0,E']$, there exists a two-dimensional disk $\Sigma_E\subset G_0^{-1}(E)$ which intersects the orbit $z_{E}(t)$ transversally. Restricted on the section, the Hamiltonian flow $\Phi_{G_0}^t$ induces a Poincar\'e return map $\Phi_{E}$: $\Sigma_E\to \Sigma_E$, and there exists some $\lambda>1,C>1$ independent of $E\le E'$ such that
$$
\|D\Phi_{E}(z_{E,0})v^-\|\ge C E^{-\lambda}\|v^-\|,\qquad \forall\ v^-\in T_{z_{E,0}}W^-_E;
$$
$$
\|D\Phi_{E}(z_{E,0})v^+\|\le C^{-1}E^{\lambda}\|v^-\|,\qquad \forall\ v^+\in T_{z_{E,0}}W^+_E,
$$
where $z_{E,0}$ is the point where the periodic orbit intersects $\Sigma_{E}$, $W^{\pm}_E$ denotes the stable $($unstable$)$ manifold of the periodic orbit.
\end{lem}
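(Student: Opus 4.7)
The plan is to factor the Poincar\'e return map as an ordered composition of $k_1+k_2$ ``ball-passage'' pieces (through $B_\delta$) interleaved with $k_1+k_2$ ``transit'' pieces along the homoclinic tubes, and then to show that each ball-passage multiplies an unstable tangent vector by a factor of order $E^{-1}$ while each transit contributes a factor that is $O(1)$ uniformly in $E$. Under the assumption $g=k_1[\gamma_1]+k_2[\gamma_2]$ with $k_1,k_2\ge 1$, this yields expansion of order $E^{-(k_1+k_2)}$, which gives the claimed bound for any $\lambda<k_1+k_2$ (in particular $\lambda>1$).

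First I would pick the section $\Sigma_E\subset G_0^{-1}(E)$ transverse to $z_E$ at some point $z_{E,0}$ lying on a transit arc well outside $B_\delta$, and insert intermediate sections $\Sigma_{E,i}^\pm$ at the crossing times $t_{E,i}^\pm$. Writing
\[
\Phi_E=T_{k_1+k_2}\circ B_{k_1+k_2}\circ\cdots\circ T_1\circ B_1,
\]
where $B_i$ is the passage from $\Sigma_{E,i}^+$ to $\Sigma_{E,i}^-$ through $B_\delta$ and $T_i$ is the transit from $\Sigma_{E,i}^-$ to $\Sigma_{E,i+1}^+$, I treat the two kinds of factors separately. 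On each transit arc, $z_E$ converges uniformly as $E\downarrow 0$ to a compact piece of one of the homoclinic orbits $d\gamma_j$; the variational equation along this compact piece has bounded coefficients, so $\|DT_i\|$ and $\|DT_i^{-1}\|$ are bounded uniformly in $E$ and $i$.

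For each ball-passage, the strategy is to exploit the Birkhoff normal form $G_0=\tfrac12(y_1^2-\lambda_1^2x_1^2)+\tfrac12(y_2^2-\lambda_2^2x_2^2)+P_3$ already used in the previous lemma, in which the $\pm\lambda_1$-eigendirections are $\Lambda_1^\pm=(1,0,\pm\lambda_1,0)$. Differentiating the variation-of-constants identity (\ref{flateq5}) with respect to the initial data shows that any tangent vector whose component along $\Lambda_1^+$ is bounded below at time $t_{E,i}^+$ is stretched by a factor $e^{\lambda_1(t_{E,i}^--t_{E,i}^+)}(1+o(1))$, which equals $E^{-1+o(1)}$ thanks to (\ref{regularenergyeq2}); the cubic corrections carried by $P_3$ and the (weaker and faster-decaying) $\lambda_2$-branch are absorbed in the $o(1)$. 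The inclination lemma applied at the hyperbolic fixed point, together with the transversality hypothesis (\textbf{H2}), forces the unit tangent to $W^-_E$ at entry to $B_\delta$ to align with $\Lambda_1^+$ up to an angle $O(\delta)+o_E(1)$, so its $\Lambda_1^+$-component is bounded below uniformly. Multiplying the $k_1+k_2$ ball-passage estimates together with the bounded transit factors gives $\|D\Phi_E(z_{E,0})v^-\|\ge C E^{-(k_1+k_2)+o(1)}\|v^-\|$.

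The contraction estimate on $W^+_E$ is obtained by running the identical argument in reversed time, exchanging the roles of $\Lambda_1^+$ and $\Lambda_1^-$. The main obstacle, in my view, is the uniform-in-$E$ control of the alignment between $TW^\pm_E$ and the linear eigendirections $\Lambda_1^\pm$ at each crossing section $\Sigma_{E,i}^\pm$: the stretching rate $E^{-1}$ per passage is available only in the $\Lambda_1^+$ direction, so I must rule out the possibility that $TW^-_E$ degenerates toward the weaker $\Lambda_2^+$-direction. This is precisely where hypothesis (\textbf{H2}) is needed, since it identifies the homoclinic tangent direction at $z=0$ with $\Lambda_1^\pm$ and propagates this identification, via a quantitative $\lambda$-lemma, to the tangent planes of $W^\pm_E$ for all sufficiently small $E$.
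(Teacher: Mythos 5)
There is a genuine gap — and it is not a technical one but a misidentification of the direction in which the Poincar\'e map expands. Near the fixed point $z=0$, the direction $\Lambda_1^{\pm}=(1,0,\pm\lambda_1,0)$ is (to leading order) the \emph{flow direction}: since $\dot z\approx (y_1,y_2,\lambda_1^2x_1,\lambda_2^2x_2)$, a point $z\approx b\Lambda_1^{\pm}$ has $\dot z\approx \pm\lambda_1 z$, and because hypothesis \textbf{(H2)} forces the minimal homoclinics to enter and exit along $\Lambda_1^{\pm}$, the velocity of $z_E$ inside $B_\delta$ is nearly parallel to $\Lambda_1^{\pm}$. But the Poincar\'e return map is defined on a codimension-one section \emph{transverse to the flow inside the energy level}; the tangent space of that section near the $\Lambda_1$-axis is spanned by $(0,1,0,0)$ and $(0,0,0,1)$, i.e.\ by the $\Lambda_2$-plane. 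The paper computes explicitly that $T_{z^-_\delta}(W^-\cap\Sigma^-_{0,\delta})$ is spanned by a vector of the form $(0,\pm 1,y_{1,\delta},\pm\lambda_2+y_{2,\delta})$, which is a perturbation of $\Lambda_2^{\pm}=(0,1,0,\pm\lambda_2)$, not of $\Lambda_1^{+}$. Your ``unit tangent to $W^-_E$ at entry to $B_\delta$ aligns with $\Lambda_1^{+}$'' is therefore not the relevant vector: that direction is quotiented out by the section, and the vector $v^-$ to which $D\Phi_E$ is applied lies essentially along $\Lambda_2^{+}$.

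This is also where the sign of the hierarchy gets reversed in your writeup: you call the $\lambda_2$-branch ``weaker and faster-decaying'', but \textbf{(H1)} orders the eigenvalues as $-\lambda_2<-\lambda_1<0<\lambda_1<\lambda_2$, so $\Lambda_2^{\pm}$ are the \emph{strong} stable/unstable directions. The mechanism behind $\lambda>1$ in the lemma is precisely the mismatch between these two rates: the time spent near the fixed point per passage is governed by the slow rate $\lambda_1$ (because the orbit enters and exits along $\Lambda_1$), namely $T\approx\lambda_1^{-1}|\ln(\delta^2/E)|$ from (\ref{regularenergyeq2}), while a tangent vector transverse to the flow lives in the $\Lambda_2$-plane and is stretched at rate $\lambda_2$ during that time. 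The per-passage expansion factor is therefore $e^{\lambda_2 T}\approx(\delta^2/E)^{\lambda_2/\lambda_1}$, as in the paper's (\ref{cylindereq5})--(\ref{cylindereq7}), giving $\lambda\approx\lambda_2/\lambda_1>1$ already after a \emph{single} ball passage. Your ansatz $e^{\lambda_1 T}\approx E^{-1}$ per passage would give $\lambda\approx 1$, not $>1$, and the appeal to $k_1+k_2\ge 2$ to push the exponent above $1$ is both unnecessary and fragile (it breaks whenever only one homoclinic is involved). Your decomposition of $\Phi_E$ into ball passages and transits, and your treatment of the transit pieces as uniformly bounded, do match the paper's strategy; the part that needs to be redone is the ball passage: the inclination/$\lambda$-lemma is indeed the right tool, but it should be used to show that $T(W^{\mp}_E\cap\Sigma^{\mp}_{E,\delta})$ stays $\varepsilon$-parallel to $\Lambda_2^{\mp}$ (not $\Lambda_1^{\pm}$), and the variational estimate should separate the $(x_1,y_1)$- and $(x_2,y_2)$-components as in (\ref{cylindereq5}) rather than project onto $\Lambda_1^{+}$.
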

\begin{proof}
To study the dynamics around the minimal homoclinic orbits $d\gamma_1(t),d\gamma_2(t)$, we use new canonical coordinates $(x,y)$ such that, restricted in a small neighborhood of $z=0$, one has the form
$$
G_0=\frac 12(y_1^2-\lambda_1^2x_1^2)+\frac 12(y_2^2-\lambda_2^2x_2^2)+P_3(x,y)
$$
where $P_3(x,y)=O(\|x,y\|^3)$. In such coordinates, we use $z_{\ell}=(x_{\ell},y_{\ell})$ to denote the homoclinic orbit $d\gamma_{\ell}$ ($\ell=1,2$). We can assume $x_{\ell,1}(t)\downarrow 0$ as $t\to -\infty$, $x_{\ell,1}(t)\uparrow 0$ as $t\to \infty$ and $\dot x_{\ell}(t)/ \|\dot x_{\ell}(t)\|\to(1,0)$ as $t\to\pm\infty$. Here the notation is taken as granted: $x_{\ell}=(x_{\ell,1},x_{\ell,2})$. We choose 2-dimensional disk lying in $G_0^{-1}(E)$
$$
\Sigma^{\mp}_{E,\delta}=\{(x,y)\in\mathbb{R}^4:\|(x,y)\|\le d,G_0(x,y)=E, x_1=\pm\delta\}.
$$
Because of the special form of $G_0$, one has
$$
\Sigma^{\mp}_{0,\delta}=\{x_1=\pm\delta, y_1^2+y_2^2-\lambda_2^2x_2^2 =\lambda_1^2\delta^2 -2P_3(\pm\delta,x_2,y),\|(x,y)\|\le d\}.
$$
Let $W^-$ ($W^+$) denote the unstable (stable) manifold of the fixed point which entirely stays in the energy level set $G_0^{-1}(0)$. If $P_3=0$, the tangent vector of $W^-\cap\Sigma^-_{0,\delta}$ has the form $(0,\pm 1,0,\pm\lambda_2)$. So, the tangent vector of $W^-\cap\Sigma^-_{0,\delta}$ takes the form
\begin{equation*}\label{tangentvector}
v_{\delta}^-=(v_{x_1},v_{x_2},v_{y_1},v_{y_2})=(0,\pm 1,y_{1,\delta},\pm\lambda_2+y_{2,\delta})\in T_{z^-_{\delta}}(W^-\cap\Sigma^-_{0,\delta})
\end{equation*}
where both $y_{1,\delta}$ and $y_{2,\delta}$ are small.

Let $T^{\pm}_{\delta,\ell}$ be the time when the homoclinic orbit $z_\ell(t)$ passes through $\Sigma^{\pm}_{0,\delta}$. Since $\partial_{y_1}G_0>0$ holds at the point $z_{\ell}(t)\cap\{x_1=\pm\delta\}$, both homoclinic orbits $z_1(t)$ and $z_2(t)$ approach the fixed point in the same direction, the section $\Sigma^{\pm}_{0,\delta}$ intersects these two homoclinic orbits transversally. Let $z^{\pm}_{\delta,\ell}$ denote the intersection point. In a small neighborhood of the point $B_{\varepsilon}(z^-_{\delta,\ell})$, one obtains a map $\Psi_{0,\delta}$: $\Sigma^-_{0,\delta}\cap B_{\varepsilon}(z^-_{\delta,\ell})\to \Sigma^+_{0,\delta}$  in following way, starting from a point $z$ in this neighborhood, there is a unique orbit which moves along $z_{\ell}(t)$ and comes to a point $\Psi_{0,\delta}(z)\in \Sigma^+_{0,\delta}$ after a time approximately equal to $T_{\delta,\ell}^+-T_{\delta,\ell}^-$.

Let us fix small $D>0$. There exists $C_0>1$ (depending on $D$) such that
$$
C_0^{-1}\le \|D\Psi_{0,D}(z^-_{D,\ell})|_{T(W^-\cap\Sigma^-_{0,D})}\|, \|D\Psi_{0,D}^{-1}(z^+_{D,\ell})|_{T(W^+\cap\Sigma^+_{0,D})}\|\le C_0
$$
holds for both $\ell=i$ and $\ell=i+1$. Clearly, one has $C_0\to\infty$ as $D\to 0$.

As the homoclinic curves approach to the origin in the direction of $(1,0)$ in $x$-space, for small $\delta\ll D$, there exists a constant $\mu_1>0$ such that $\mu_1\downarrow 0$ as $D\to 0$ and
\begin{equation}\label{time}
\frac {1}{\lambda_1+\mu_1}\ln\Big(\frac{D}{\delta}\Big)\le T^{-}_{D,\ell}-T^{-}_{\delta,\ell}, T^+_{\delta, \ell}-T^+_{D,\ell}\le \frac {1}{\lambda_1-\mu_1}\ln\Big(\frac{D}{\delta}\Big).
\end{equation}
The Hamiltonian flow $\Phi^t_{G_0}$ defines a map $\Psi^-_{0,\delta,D}$: $\Sigma^-_{0,\delta}\to\Sigma^-_{0,D}$ and a map $\Psi^+_{0,\delta,D}$: $\Sigma^+_{0,D}\to\Sigma^+_{0,\delta}$: emanating from a point in $\Sigma^-_{0,\delta}$ ($\Sigma^+_{0,D}$) there exists a unique orbit which arrives $\Sigma^-_{0,D}$ ($\Sigma^+_{0,\delta}$) after a time bounded by the last formula.

Restricted in the ball $B_D$, let us consider the variational equation of the flow $\Psi_{G_0}^s$ along the homoclinic orbit $z_j(t)$. It follows from the normal form of the homogenized Hamiltonian $G_0$ that the tangent vector $(\Delta x,\Delta y)=(\Delta x_1,\Delta x_2,\Delta y_1,\Delta y_2)$ satisfies the variational equation
\begin{equation}\label{cylindereq4}
\begin{aligned}
\Delta\dot  x_i&=\Delta y_i+\sum_{j=1}^2\Big(\frac{\partial^2P}{\partial x_j\partial y_i}\Delta x_j+ \frac{\partial^2P}{\partial y_j\partial y_i}\Delta y_j\Big), \\
\Delta\dot  y_i&=\lambda_i^2\Delta x_i-\sum_{j=1}^2\Big(\frac{\partial^2P}{\partial x_j\partial x_i}\Delta x_j+ \frac{\partial^2P}{\partial y_j\partial x_i}\Delta y_j\Big), \ \ \ i=1,2.
\end{aligned}
\end{equation}
Clearly, $|\partial^2 P(z_{\ell}(t))|\le C_1\|z_{\ell}(t)\|$ with $C_1>0$ if $\|z_{\ell}(t)\|$ is small. Along the homoclinic orbit $\|z_{\ell}(t)\|$ is of the same order as $\|x_{\ell}(t)\|$ if $\|x_{\ell}(t)\|$ small. Since the homoclinic orbit approaches to the fixed point in the direction of $(\dot x,\dot y)=(1,0,\lambda_1^2,0)$, one has
$$
De^{-(\lambda_1+\mu_1)(t-T^+_{D,\ell})}\le\|x(t)|_{[T^+_{D,\ell},\infty)}\|\le De^{-(\lambda_1-\mu_1)(t-T^+_{D,\ell})}.
$$
For the initial value $\Delta z(T^+_{D,\ell})=(\Delta x(T^+_{D,\ell}),\Delta y(T^+_{D,\ell}))$ satisfying the condition
$$
|\langle\Delta z(T^+_{D,\ell}),v^-_{\delta}\rangle|\ge 2/3\|\Delta z(T^+_{D,\ell})\|\|v^-_{\delta}\|
$$
$(v^-_{\delta}=(0,\pm 1,y_{1,\delta},\pm\lambda_2+y_{2,\delta}))$ one obtains from the hyperbolicity that
$$
C_2^{-1}\|\Delta z(T^+_{D,\ell})\|e^{(\lambda_2-\mu_1)(T^+_{\delta,\ell}-T^+_{D,\ell})}\le\|\Delta z(T^+_{\delta,\ell})\|\le C_2\|\Delta z(T^+_{D,\ell})\|e^{(\lambda_2+\mu_1)(T^+_{\delta,\ell}-T^+_{D,\ell})}
$$
holds for some constant $C_2>1$ depending on $\lambda_i$ as well as on $P$. Thus, for each vector $v\in T_{z_D^+}\Sigma^+_{0,D}$ nearly parallel to $T_{z_D^+}(W^-\cap\Sigma^+_{0,D})$ in the sense that $|\langle v,v'\rangle|\ge\frac 23\|v\|\|v'\|$ holds for $v'\in T_{z_D^+}(W^-\cap\Sigma^+_{0,D})$ we obtain from the last two formulae and (\ref{time}) that
$$
C_2^{-1}\Big(\frac D{\delta}\Big)^{\frac{\lambda_2}{\lambda_1}-\mu_2}\le
\lim_{\|v\|\to 0}\frac{\|D\Psi^+_{0,\delta,D}(z^+_{D,\ell})v\|}{\|v\|}\le C_2\Big(\frac D{\delta}\Big)^{\frac{\lambda_2}{\lambda_1}+\mu_2}.
$$
Similarly, one has
$$
C_3^{-1}\Big(\frac D{\delta}\Big)^{\frac{\lambda_2}{\lambda_1}-\mu_2}\le \|D\Psi^-_{0,\delta,D}(z^-_{\delta,\ell})|_{T_{z^-_{\delta}}(W^-\cap\Sigma^-_{0,\delta})}\|\le C_3\Big(\frac D{\delta}\Big)^{\frac{\lambda_2}{\lambda_1}+\mu_2},
$$
where $C_3>1$ also depends on $\lambda_i$ as well as on $P$, $\mu_2>0$ and $\mu_2\to 0$ as $D\to 0$.

By the construction, the 2-dimensional disk $\Sigma^{-}_{0,\delta}$ intersects the unstable manifold $W^-$ along a curve. Let $\Gamma^{-}_{\delta,\ell}\subset W^{-}\cap\Sigma^{-}_{0,\delta}$ be a very short segment of the curve, passing through the point $z^{-}_{\delta,\ell}$. Pick up a point $z^*_\ell$ on the homoclinic orbit $z_\ell$ far away from the fixed point and take a 2-dimensional disk $\Sigma^*_\ell\subset G_0^{-1}(0)$ containing the point $z^*_\ell$ and transversal to the flow $\Phi^t_{G_0}$ in the sense that $T_{z^*_\ell}G_0^{-1}(0)=\text{\rm span}(T_{z^*_\ell} \Sigma_\ell,J\nabla G_0(z^*_\ell))$. The Hamiltonian flow $\Phi^t_{G_0}$ maps a point of $\Gamma^{-}_{\delta,\ell}$ to this disk provided it is close to $z^-_\ell$. In this way, one obtains a map $\Psi^{-,*}_{\delta,\ell}$: $\Sigma^-_{0,\delta}\to\Sigma^*_\ell$. Let $\Gamma^{-,*}_{\delta,\ell}= \Psi^{-,*} _{\delta,\ell} \Gamma^{-}_{\delta,\ell}$. According to the assumption ({\bf H2}), one has $T_{z^*_{\ell}}G_0^{-1}(0)=\text{\rm span}(T_{z^*_{\ell}}W^+,T_{z^*_{\delta,\ell}}W^-)$.  Thus, one also has $T_{z^*_{\ell}}G_0^{-1}(0)=\text{\rm span}(T_{z^*_{\ell}}W^+,T_{z^*_{\ell}} \Gamma^{-,*}_{\ell})$. It follows from the $\lambda$-lemma that $\Psi_{0,\delta}(\Gamma^-_{\delta,\ell})$ keeps $C^1$-close to $W^-\cap\Sigma^+_{0,\delta}$ at the point $z^+_{\delta,\ell}$ and $\Psi^{-1}_{0,\delta}(\Gamma^+_{\delta,\ell})$ keeps $C^1$-close to $W^+\cap\Sigma^-_{0,\delta}$ at the point $z^-_{\delta,\ell}$ provided $\delta>0$ is sufficiently small. As $\Psi_{0,\delta}=\Psi^-_{0,\delta,D}\circ\Psi_{0,D}\circ\Psi^+_{0,\delta,D}$, one obtains
$$
C_4^{-1}\left(\frac D{\delta}\right)^{2(\frac {\lambda_2}{\lambda_1}-\mu_2)}\le \|D\Psi_{0,\delta}(z^-_{\delta})|_{T_{z^-_{\delta}}(W^-\cap\Sigma^-_{0,\delta})}\|
\le C_4\left(\frac D{\delta}\right)^{2(\frac {\lambda_2}{\lambda_1}+\mu_2)},
$$
and
$$
C_4^{-1}\left(\frac D{\delta}\right)^{2(\frac {\lambda_2}{\lambda_1}-\mu_2)}\le \|D\Psi_{0,\delta}^{-1} (z^+_{\delta}) |_{T_{z^+_{\delta}}(W^+\cap\Sigma^+_{0,\delta})}\|
\le C_4\left(\frac D{\delta}\right) ^{2(\frac {\lambda_2}{\lambda_1}+\mu_2)},
$$
where $C_4=C_0C_2C_3>1$. See the figure below.

\begin{figure}[htp]
  \centering
  \includegraphics[width=6.5cm,height=6.7cm]{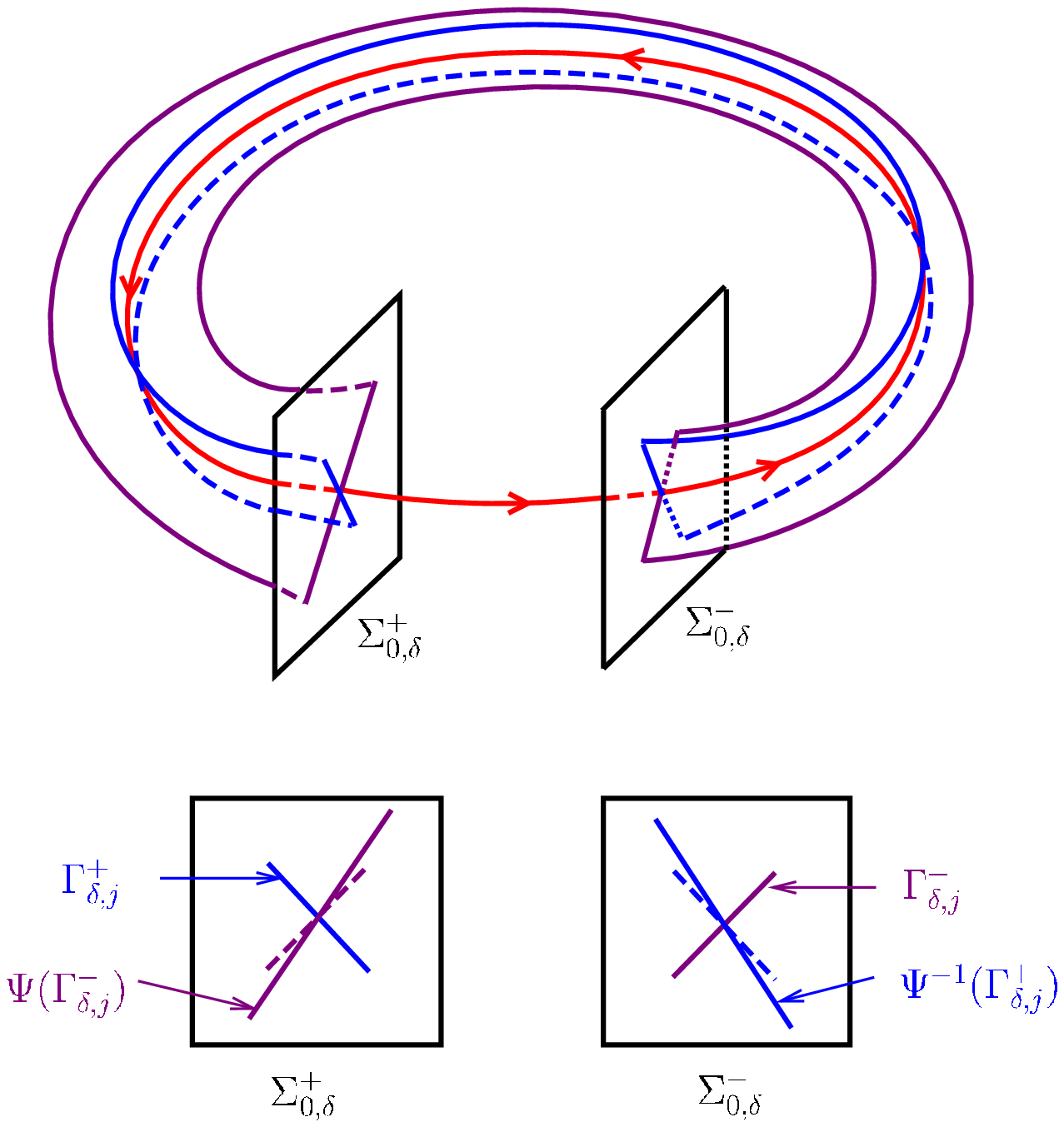}
  \label{fig7}
\end{figure}

Recall the definition, $\Sigma^{\pm}_{E,\delta}$ is a two-dimensional disk lying in the energy level set $G_0^{-1}(E)$. For $E>0$ sufficiently small, $\Sigma^{\pm}_{E,\delta}$ is $C^{r-1}$-close to $\Sigma^{\pm}_{0,\delta}$ respectively. Let $z_{E}(t)=(x_{E}(t),y_{E}(t))$ be the minimal periodic orbit staying in the energy level set $G_0^{-1}(E)$, it approaches to the homoclinic orbit as $E$ decreases to zero. Thus, for sufficiently small $E>0$, it passes through the section $\Sigma^{-}_{E,\delta}$ as well as $\Sigma^{+}_{E,\delta}$ $k_1+k_2$ times for one period. We number these points as $z^{\pm}_{E,k}$ ($k=1,2,\cdots k_1+k_2$) by the role that emanating from a point $z^-_{E,k}$, the orbit reaches to the point $z^+_{E,k+1}$ after time $\Delta t^-_{E,k}$, then to the point $z^-_{E,k+1}$ and so on. Note that $\Delta t^-_{E,k}$ remains bounded uniformly for any $E>0$.  Restricted on small neighborhoods of these points, denoted by $B_d(\bar z^{\pm}_{E,k})$, the flow $\Phi_{G_0}^t$ defines a local diffeomorphism $\Psi_{E,\delta}$: $\Sigma^{-}_{E,\delta}\supset B_d(\bar z^{-}_{E,k})\to \Sigma^{+}_{E,\delta}$. Because of the smooth dependence of ODE solutions on initial data, a small $\varepsilon>0$ exists such that, for the vector $v^{\pm}$ $\varepsilon$-parallel to $T_{z^{\pm}_{\delta}}(W^{\pm}\cap\Sigma^{\pm}_{0,\delta})$ in the sense that $|\langle v^{\pm},v^{*\pm}\rangle|\ge (1-\varepsilon) \|v^{\pm}\|\|v^{*\pm}\|$ holds for some $v^{*\pm}\in T_{z^{\pm}_{\delta}}(W^{\pm}\cap\Sigma^{\pm}_{0,\delta})$, we obtain from the hyperbolicity of $\Psi_{0,\delta}$ (see the formulae above the figure) that
$$
C_5^{-1}\left(\frac D{\delta}\right)^{2(\frac {\lambda_2}{\lambda_1}-\mu_3)}\le \frac {\|D\Psi_{E,\delta}(z^-_{E,k})v^-\|}{\|v^-\|}
\le C_5\left(\frac D{\delta}\right)^{2(\frac {\lambda_2}{\lambda_1}+\mu_3)},
$$
and
$$
C_5^{-1}\left(\frac D{\delta}\right)^{2(\frac {\lambda_2}{\lambda_1}-\mu_3)}\le \frac {\|D\Psi_{E,\delta}^{-1}(z^+_{E,k})v^+\|}{\|v^+\|}
\le C_5\left(\frac D{\delta}\right) ^{2(\frac {\lambda_2}{\lambda_1}+\mu_3)}
$$
where $C_5\ge C_4>1$, $0<\mu_3\to 0$ as $D\to 0$. If the vector $v^-$ is chosen  $\varepsilon$-parallel to $T_{z^{-}_{\delta}}(W^{-}\cap\Sigma^{-}_{0,\delta})$ then the vector $D\Psi_{E,\delta}(z^-_{E,k})v^-$ is $\varepsilon$-parallel to $T_{z^{+}_{\delta}}(W^{+}\cap\Sigma^{+}_{0,\delta})$.

For $E>0$, the Hamiltonian flow $\Phi_{G_0}^t$ defines local diffeomorphism $\Psi^+_{E,\delta,\delta}$: $\Sigma^{+}_{E,\delta}\supset B_d(\bar z^{+}_{E,k})\to\Sigma^{-}_{E,\delta}$. To make sure $\Psi^+_{E,\delta,\delta}(B_d(\bar z^{+}_{E,k}))\subset\Sigma^{-}_{E,\delta}$ one has $d\to 0$ as $E\to 0$. According to Formula (\ref{regularenergyeq2}), starting from $\Sigma^{+}_{E,\delta}$, the periodic orbit comes to $\Sigma^{-}_{E,\delta}$ after a time approximately equal to
$$
T=\frac 1{\lambda_1}\Big|\ln\Big(\frac{\delta^2}{E}\Big)\Big|+\tau_{\delta}
$$
in which $\tau_{\delta}$ is uniformly bounded as $\delta\to 0$. Given a vector $v$, we use $v_i$ denote the $(x_i,y_i)$-component. For a vector $v^+$ $\varepsilon$-parallel to $T_{z^+_{0,\delta}}(W^-\cap\Sigma^+_{0,\delta})$, there is $C>0$ such that $\|v^+_2\|\ge C\|v^+_1\|$. From Eq.(\ref{cylindereq4}) one obtains
\begin{equation}\label{cylindereq5}
\begin{aligned}
\|v^+_2\|e^{(\lambda_2-\mu)T}\le&\|D\Psi^+_{E,\delta,\delta}(z^+_{E,k})v^+_2\|\le\|v^+_2\|e^{(\lambda_2+ \mu)T},\\
\|v^+_1\|e^{(\lambda_1-\mu)T}\le&\|D\Psi^+_{E,\delta,\delta}(z^+_{E,k})v^+_1\|\le\|v^+_1\|e^{(\lambda_1+ \mu)T}
\end{aligned}
\end{equation}
where $0<\mu\to 0$ as $\delta\to0$, $\lambda_2>\lambda_1>0$. It follows that the vector $D\Psi^+_{E,\delta,\delta}(z^+_{E,k})v^+$ is $\varepsilon$-parallel to $T_{z^-_{0,\delta}}(W^-\cap\Sigma^-_{\delta})$ and
$$
C_6^{-1}\Big(\frac {\delta^2}E\Big)^{\frac{\lambda_2}{\lambda_1}-\mu_4}
\le\frac{\|D\Psi^+_{E,\delta,\delta}(z^+_{E,k})v^+\|}{\|v^+\|}
\le C_6\Big(\frac {\delta^2}E\Big)^{\frac{\lambda_2}{\lambda_1}+\mu_4}
$$
where $C_6>1$ and $\mu_4\downarrow 0$ as $\delta\downarrow 0$. Similarly, for a vector $v^-$ $\varepsilon$-parallel to $T_{z^-_{0,\delta}}(W^+\cap\Sigma^-_{0,\delta})$, one sees that the vector $D{\Psi^+_{E,\delta,\delta}(z^-_{E,j})}^{-1}v^-$ is $\varepsilon$-parallel to $T_{z^-_{0,\delta}}(W^-\cap\Sigma^-_{0,\delta})$ and
$$
C_6^{-1}\Big(\frac {\delta^2}E\Big)^{\frac{\lambda_2}{\lambda_1}-\mu_4}
\le\frac{\|D{\Psi^+_{E,\delta,\delta}}^{-1}(z^-_{E,k})v^-\|}{\|v^-\|}
\le C_6\Big(\frac {\delta^2}E\Big)^{\frac{\lambda_2}{\lambda_1}+\mu_4}.
$$

The composition of these two maps makes up a Poin\'care map $\Phi_{E,\delta}=\Psi^+_{E,\delta,\delta}\circ\Psi_{E,\delta}$, it maps a small neighborhood of the point $z^-_{E,k}$ in $\Sigma^{-}_{E,\delta}$ to a small neighborhood of the point $z^-_{E,k+1}$ in $\Sigma^{-}_{E,\delta}$. For a vector $v^-$ $\varepsilon$-parallel to $T_{z^-_{0,\delta}}(W^-\cap\Sigma^-_{0,\delta})$ the vector $D\Phi_{E,\delta}(z^-_{E,k})v^-$ is still $\varepsilon$-parallel to $T_{z^-_{0,\delta}}(W^-\cap\Sigma^-_{0,\delta})$ and
\begin{equation}\label{cylindereq6}
\Lambda^{-1}\left(\frac {D^2}{E}\right)^{\frac{\lambda_2}{\lambda_1}-\mu_5}
\le\frac {\|D\Phi_{E,\delta}(z^-_{E,k})v^-\|}{\|v^-\|}
\le \Lambda\left(\frac{D^2}{E}\right)^{\frac {\lambda_2}{\lambda_1}+\mu_5},
\end{equation}
and for a vector $v^+$ $\varepsilon$-parallel to $T_{z^-_{0,\delta}}(W^+\cap\Sigma^-_{0,\delta})$ the vector $D\Phi_{E,\delta}^{-1}(z^-_{E,k})v^+$ is still $\varepsilon$-parallel to $T_{z^-_{0,\delta}}(W^+\cap\Sigma^-_{0,\delta})$ and
\begin{equation}\label{cylindereq7}
\Lambda^{-1}\left(\frac {D^2}{E}\right)^{\frac{\lambda_2}{\lambda_1}-\mu_5}
\le\frac{\|D\Phi_{E,\delta}^{-1}(z^-_{E,k})v^+\|}{\|v^+\|}
\le \Lambda\left(\frac {D^2}{E}\right)^{\frac{\lambda_2}{\lambda_1}+\mu_5}
\end{equation}
holds for each $k$, where $\Lambda\ge C_5C_6>1$, $0<\mu_5\to 0$ as $D\to 0$. Therefore, each point $z^-_{E,k}$ is a hyperbolic fixed point for the map $\Phi^{k_i+k_{i+1}}_{E,\delta}$, $\{z^-_{E,k}:k=1,\cdots,k_i+k_{i+1}\}$ is a hyperbolic orbit of $\Phi_{E,\delta}$. It will be proved in \cite{C15} that these points are uniquely ordered, $k_i+k_{i+1}$ is the minimal period. We complete the proof.
\end{proof}
\begin{cor}
The $(E,g)$-minimal periodic orbit in the energy level $G_0^{-1}(E)$ with $E\le E'$ has a continuation of hyperbolic periodic orbits which approach to that two homoclinic orbits $d\gamma_1$ and $d\gamma_2$. They make up an invariant cylinder which takes the homoclinic orbits as its boundary.
\end{cor}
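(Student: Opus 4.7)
The plan is to assemble the corollary directly from Lemma \ref{cylinderlem1} together with the variational construction of the $(E,g)$-minimal periodic orbit $z_E$ and a standard persistence/continuity argument, parametrising the family by the energy $E\in(0,E']$.

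First I would record the pointwise ingredients that are already in hand. For each $E\in(0,E']$ the minimization of the action functional $F(\cdot,E,g)$ in (\ref{functionofaction}) yields at least one $(E,g)$-minimal periodic orbit $z_E(t)\subset G_0^{-1}(E)$, and by Lemma \ref{cylinderlem1} the corresponding fixed point $z_{E,0}$ of the Poincar\'e map $\Phi_E$ on the transverse section $\Sigma_E\subset G_0^{-1}(E)$ is hyperbolic, with splitting bounds $\|D\Phi_E(z_{E,0})v^-\|\ge CE^{-\lambda}\|v^-\|$ and $\|D\Phi_E(z_{E,0})v^+\|\le C^{-1}E^{\lambda}\|v^+\|$ uniform in $E\in(0,E']$. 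In particular $z_{E,0}$ is a non-degenerate fixed point of $\Phi_E$ since none of its multipliers lies on the unit circle.

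Next I would establish the $C^1$-parameter dependence on $E$. The sections $\Sigma_E$ depend smoothly on $E$ (they are defined by $G_0=E$, $x_1=\pm\delta$, in the normal-form coordinates), so the Poincar\'e maps $\Phi_E:\Sigma_E\to\Sigma_E$ fit together into a smooth family. Because $z_{E,0}$ is a hyperbolic fixed point, $\mathrm{id}-D\Phi_E(z_{E,0})$ is invertible, so the implicit function theorem gives a smooth map $E\mapsto z_{E,0}$ on $(0,E']$. Saturating under the Hamiltonian flow yields a smooth family of periodic orbits $\{z_E(t)\}_{E\in(0,E']}$, and uniqueness of the minimiser (which, as the authors point out, is to be proved in the companion reference) can be replaced here by uniqueness of the hyperbolic continuation inside a small neighbourhood.

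Then I would form the cylinder $\Pi=\bigcup_{E\in(0,E']}\mathrm{Orbit}(z_E)$. Each orbit is a topological circle; smoothness of the parametrisation $E\mapsto z_E$ makes $\Pi$ a $C^1$-embedded 2-manifold, and it is invariant under $\Phi^t_{G_0}$ because each constituent orbit is. Topologically $\Pi$ is a cylinder with one circular boundary at $E=E'$ (the periodic orbit $z_{E'}$). For the other boundary I would use the limiting behaviour established in the proof of (\ref{period}): as $E\downarrow0$ the orbit $z_E$ spends longer and longer in each of $k_1+k_2$ short passes near $0$ in between near-heteroclinic excursions whose uniform $C^0$-limit is the concatenation of $k_1$ copies of $d\gamma_1$ and $k_2$ copies of $d\gamma_2$. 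Hence the ``bottom'' boundary of $\Pi$ is exactly the union of these two minimal homoclinic loops taken with the prescribed multiplicities, which is precisely the statement of the corollary.

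The main obstacle, which is the reason the preceding lemma occupies most of the section, is the hyperbolicity estimate near $E=0$; once that is in place, the present corollary is a standard packaging argument combining implicit function theorem (for the smooth dependence of $z_E$ on $E$) with the $C^0$-convergence $z_E\to d\gamma_1^{k_1}\ast d\gamma_2^{k_2}$ as $E\downarrow0$ that was proved along the way to (\ref{period}). The only subtlety I would be careful about is that the limiting ``boundary'' is not a single circle but a figure-eight-like union of the two homoclinic loops, so $\Pi$ is a topological cylinder whose lower edge is pinched, consistent with the language used in the statement.
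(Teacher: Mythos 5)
Your proposal is correct and follows essentially the same route as the paper: it rests on the hyperbolicity from Lemma \ref{cylinderlem1}, continues the periodic orbit in $E$ via the implicit function theorem, and identifies the lower boundary of the resulting cylinder with the two minimal homoclinic loops using the $E\downarrow 0$ asymptotics established in the proof of the period formula (\ref{period}). You also explicitly note, as the paper does, that the minimality of $z_E$ is not needed for the continuation --- only the hyperbolicity, which is gained purely from proximity to the homoclinics --- and your added discussion of the pinched/figure-eight boundary is a harmless elaboration of what the paper leaves implicit.
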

\begin{proof}
According to Lemma \ref{cylinderlem1}, the hyperbolicity of $(E,g)$-minimal orbit becomes very strong when $E\downarrow 0$. Such hyperbolic property is gained if the periodic orbit approaches the homoclinic orbits, the minimal property is not used. By the theorem of implicit function, this $(E,g)$-minimal orbit has a continuation of hyperbolic periodic orbits arbitrarily close to the homoclinic orbits $d\gamma_1$ and $d\gamma_2$.
\end{proof}

Let $E'_1=h(p'_1)$. As we increase the energy from $E'$ to $E'_1$, it follows from Theorem \ref{mainth} that there are finitely many $E_i\in [E',E'_1]$ only such that for $E\in [E',E'_1]\backslash\{E_i\}$, the energy level $G_0^{-1}(E)$ contains only one $(E,g)$-minimal orbit and $G_0^{-1}(E_i)$ contains two minimal periodic orbits. We call these $\{E_i\}$ bifurcation points. Therefore, these hyperbolic orbits make up finitely many pieces of invariant cylinder, normally hyperbolic for the time-$2\pi$-map $\Phi_{G_0}^{2\pi}$, produced by the Hamiltonian flow $\Phi_{G_0}^t$.

In the next step, we are going to study if these cylinders can survive the map $\Phi_{G_{\epsilon}}$, induced by the flow $\Phi_{ G_{\epsilon}}^t$, where $G_{\epsilon}$, defined in \ref{Hamilton2}, is a small time-periodic perturbation of $G_0$.

\subsection{Invariant splitting of the tangent bundle: near double resonance}
As shown in the last section, there is a cylinder made up by periodic orbits $(x_{E}(t),y_{E}(t))$ of $\Phi_{G_0}^t$ which extends from the energy level $G_0^{-1}(E')$ to the homoclinic orbits, denoted by
$$
\Pi_{0,E',g}=\{(x_{E}(t),y_{E}(t)):[x_{E}]=g,E\in (0,E'],t\in\mathbb{R}\}.
$$
Let $T(E)$ denote the period of the periodic orbit in $G_0^{-1}(E)$, for any $0<a<b\le E'$ one has
$$
\int_{\Pi_{a,b,g}}\omega=\int_{a}^{b}\int_{0}^{T(E)}dE\wedge dt>0.
$$
The cylinder might be slant and crumpled, we want to know how the symplectic area is related to the usual area of the cylinder. Since the cylinder is made up by periodic orbits, if we denote the intersection point of the orbit $z_E(t)$ with the section $x_1=\delta$ by $(\delta,y_1(E),x_2(E),y_2(E))$, then $(x_2(E),y_2(E))$ is a fixed point of the Poincar\'e return map $\Phi_{E,\delta}$, i.e. $\Phi_{E,\delta}(x_2(E),y_2(E))=(x_2(E),y_2(E))$.
So we have
\begin{equation}\label{cylindereq8}
\Big(\frac{\partial^2\Phi_{E,\delta}}{\partial x_2\partial y_2}-\mathrm{id}\Big)\Big(\frac{\partial x_2}{\partial y_1},\frac{\partial y_2}{\partial y_1}\Big)^t=\frac{\partial\Phi_{E,\delta}}{\partial y_1}.
\end{equation}
To study the quantity $\frac{\partial\Phi_{E,\delta}}{\partial y_1}$, let us recall the picture of Figure \ref{fig7}. Emanating from a point $(\delta,y_1,x_2,y_2)\in G_0^{-1}(E)$ the orbit reach a point $z$ in the section $\{x_1=-\delta\}$ after a time $\tau(E,\delta)$. Let $z^*\in\{x_1=-\delta\}$ be the point corresponding to $(\delta,y^*_1,x_2,y_2)\in \bar G^{-1}(E^*)$, obtained in the same way. Since $\tau(E,\delta)$ remains bounded as $E\downarrow 0$, the difference of the $(x_2,y_2)$-coordinate of $z$ and $z^*$ is bounded by $d_0|y_1-y^*_1|$ where $d_0$ depends on $\tau(E,\delta)$.  Let $(\Delta x, \Delta y)$ be the solution of the variational equation (\ref{cylindereq4}) along the $(E,g)$-minimal periodic solution $(x_{E}(t),y_{E}(t))$, let $t_0<t_1$ be the time such that $x_{E,1}(t_0)=-\delta$ and $x_{E,1}(t_1)=\delta$ if we use the notation $x_E=(x_{E,1},x_{E,2})$, the quantity $t_1-t_0$ is bounded by (\ref{regularenergyeq2}). In virtue of the formula (\ref{cylindereq5}), one obtains
\begin{align*}
\|(\Delta x, \Delta y)(t_1)\|&\le C_7e^{(\lambda_2+\mu)T}\|(\Delta x, \Delta y)(t_0)\|\\
&\le C_8E^{-\frac{\lambda_2}{\lambda_1}-\mu_6}\|(\Delta x, \Delta y)(t_0)\|
\end{align*}
where $0<\mu_6\to 0$ as $\delta\to 0$. It implies that
$$
\Big\|\frac{\partial\Phi_{E,\delta}}{\partial y_1}\Big\|\le C_8E^{-\frac{\lambda_2}{\lambda_1}-\mu_6}.
$$
Since Hamiltonian flow preserves the symplectic structure, the matrix $\frac{\partial\Phi_{E,\delta}}{\partial (x_2,p_2)}$ is area-preserving. One eigenvalue is large, lower bounded by (\ref{cylindereq6}), another one is small. Let $\zeta_1$ be the eigenvector for the large eigenvalue and $\zeta_2$ be the eigenvector for the small one, then we have a decomposition of
$$
\frac{(\Delta x, \Delta y)(t_1)}{\|(\Delta x, \Delta y)(t_1)\|}=a_1\zeta_1+a_2\zeta_2.
$$
Because of the hyperbolic structure (see (\ref{cylindereq4}), (\ref{cylindereq6}) and (\ref{cylindereq7})), if
$$
\|(\Delta x, \Delta y)(t_1)\|\ge C_9\|(\Delta x, \Delta y)(t_0)\|,
$$
the projection of vector $(\Delta x, \Delta y)(t_1)$ to $\zeta_1$ is not small in the sense that $|a_1|\ge\frac 12$. Where $C_9>0$ is suitably large, but independent of small $E>0$. Therefore, we obtain from Equation (\ref{cylindereq8}) that
\begin{equation*}\label{cylindereq9}
\Big\|\frac{\partial x_2}{\partial y_1}\Big\|,\Big\|\frac{\partial y_2}{\partial y_1}\Big\|\le C_{10}E^{-2\mu_6}.
\end{equation*}
It provides a lower bound of the symplectic area $\omega$ with respect to the usual area $S$ of the cylinder $\Pi_{a,b,g}$
\begin{equation}\label{cylindereq10}
|\omega|\ge C_{11}E^{2\mu_6}|S|.
\end{equation}

To study the invariant splitting of the tangent bundle over the cylinder $\Pi_{0,E',g}$, for $E>0$ we define
\begin{equation*}\label{passtime}
T_{E}=\frac 2{\lambda_1}|\ln E|.
\end{equation*}
\begin{theo}\label{cylinderthm1}
With the hypotheses \text{\rm ({\bf H1}), ({\bf H2})} and $0<E_d\le E'$, the invariant cylinder $\Pi_{E_d,E',g}$ is normally hyperbolic for the map $\Phi_{G_0}^s$, where $s\ge T_{E_d}$. The tangent bundle of $\mathbb{T}^2$ over $\Pi_{E_d,E',g}$ admits the invariant splitting:
$$
T_zM=T_zN^+\oplus T_z\Pi_{E_d,E',g}\oplus T_zN^-
$$
some $\Lambda_1\ge 1$, $\Lambda_2\ge 1$ and small $E>0$ exist such that  $\lambda_2/\lambda_1-\nu>1+\nu$
\begin{equation}\label{cylindereq11}
\begin{aligned}
\Lambda_1^{-1}E_d^{1+\nu}<\frac {\|D\Phi^s_{ G_0}(z)v\|}{\|v\|}&<\Lambda_1E_d^{-1-\nu},\qquad \forall\ v\in T_z\Pi_{E_d,E',g},\\
\frac{\|D\Phi_{ G_0}^{s}(z)v\|}{\|v\|}&\le \Lambda_2 E_d^{\frac{\lambda_2}{\lambda_1}-\nu},\qquad \forall\ v\in T_zN^+,\\
\frac{\|D\Phi_{ G_0}^{s}(z)v\|}{\|v\|}&\ge\Lambda_2^{-1} E_d^{-\frac{\lambda_2}{\lambda_1}+\nu}, \qquad \forall\ v\in T_zN^-.
\end{aligned}
\end{equation}
\end{theo}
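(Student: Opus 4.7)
The plan is to construct the invariant splitting from three natural geometric objects, then to read off the three growth rates in (\ref{cylindereq11}) by bootstrapping the discrete estimates already available. For $N^+$ and $N^-$, at a periodic-orbit point on the cylinder I take the 1-dimensional stable and unstable subbundles of the orbit $z_E(t)$ inside its 3-dimensional energy level $G_0^{-1}(E)$ (produced by Lemma \ref{cylinderlem1}), extended to every point of the cylinder by pushing forward under $\Phi_{G_0}^t$. The subbundle $T\Pi_{E_d,E',g}$ is the tangent bundle of the cylinder itself. Invariance of all three pieces under $\Phi_{G_0}^s$ is automatic since $\Pi_{E_d,E',g}$ is a flow-invariant surface and the stable/unstable manifolds of each periodic orbit are flow-invariant; transversality of the three pieces follows from the hyperbolicity of Lemma \ref{cylinderlem1} combined with the cylinder being a smooth continuation of periodic orbits.

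For the transverse rates I would compare the continuous-time flow to the discrete Poincar\'e return map. The return time per iteration of $\Phi_{E,\delta}$ is $T(E)/(k_1+k_2) \approx \lambda_1^{-1}|\ln E|$ by the period formula (\ref{period}), so a flow time $s \geq T_{E_d} = \tfrac{2}{\lambda_1}|\ln E_d|$ corresponds to at least one full loop of the return map plus a uniformly bounded remainder. Composing the per-iteration estimates (\ref{cylindereq6})--(\ref{cylindereq7}) over the appropriate number of iterations, and converting back from the Poincar\'e section to the ambient flow through the bounded-distortion maps $\Psi^{\pm}_{E,\delta,\delta}$, yields $\|D\Phi_{G_0}^s|_{T_zN^+}\| \leq \Lambda_2 E_d^{\lambda_2/\lambda_1 - \nu}$ and $\|D\Phi_{G_0}^s|_{T_zN^-}\| \geq \Lambda_2^{-1} E_d^{-\lambda_2/\lambda_1 + \nu}$, with $\nu$ absorbing both the $\mu_5$ from Lemma \ref{cylinderlem1} and the bounded losses in the time conversion.

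For the tangential rate on $T\Pi_{E_d,E',g}$ I would split a vector $v$ into its component along the flow direction $J\nabla G_0(z)$, which $\Phi_{G_0}^s$ maps to $J\nabla G_0(\Phi_{G_0}^s z)$ of norm $O(1)$ uniformly on the cylinder, and its transverse-within-cylinder component, essentially a multiple of $\partial_E z_E(t)$. The latter is precisely what (\ref{cylindereq9}) controls, and together with the flow preserving the symplectic area it furnishes a sub-polynomial two-sided bound of the form $E_d^{\pm 2\mu_6}$. Enlarging $\nu$ to absorb $2\mu_6$ plus the $O(1)$ oscillation of the flow-direction vector yields $\Lambda_1^{-1}E_d^{1+\nu} \leq \|D\Phi_{G_0}^s|_{T_z\Pi}\| \leq \Lambda_1 E_d^{-1-\nu}$, which is much milder than the hyperbolic rates.

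The main obstacle is the tangential estimate. A priori the cylinder could fold so sharply near the homoclinic boundary that its tangent plane almost coincides with $T_zN^-$, in which case the tangential growth would inherit the full hyperbolic rate and normal hyperbolicity would collapse. The lower bound (\ref{cylindereq10})--itself a consequence of the Poincar\'e map being area-preserving on each section together with the return-map expansion (\ref{cylindereq6})--is exactly what rules this out: the cylinder cannot crumple by more than a factor $E_d^{-O(\mu_6)}$, with $\mu_6 \to 0$ as $\delta \downarrow 0$. Once this rigidity is in hand, the spectral gap $\lambda_2/\lambda_1 - \nu > 1+\nu$, which holds whenever $\lambda_2 > \lambda_1$ and $\nu$ is chosen small, delivers normal hyperbolicity of $\Pi_{E_d,E',g}$ in the standard sense.
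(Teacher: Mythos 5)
Your overall strategy mirrors the paper's: $N^{\pm}$ are taken as the stable/unstable directions of the periodic orbits, the normal rates come from composing the return-map estimates (\ref{cylindereq6})--(\ref{cylindereq7}) over a flow time of order $T_{E_d}$, and the tangential rate is controlled by the crumpling bound (\ref{cylindereq9})--(\ref{cylindereq10}) together with area preservation. Up to wording, the decomposition, the lemmas invoked, and the role of (\ref{cylindereq10}) in ruling out normal-hyperbolicity collapse are all the same as in the paper.

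However, there is a genuine gap in your tangential estimate. You assert that the flow-direction vector $J\nabla G_0(\Phi^s_{G_0}z)$ has norm $O(1)$ uniformly on the cylinder, so that the flow direction contributes only a bounded factor. This is false. Along the periodic orbit $z_E(t)$, the closest approach to the hyperbolic fixed point is at distance $\sim\sqrt{E}$ (from (\ref{flateq5}), (\ref{flateq6}) and the estimates $|b_1^{\pm}|\sim\sqrt{E}/\delta$), and near the origin $|J\nabla G_0(z)|\sim|z|$, so the speed along the orbit ranges over roughly $[c\sqrt{E},\,O(1)]$. Hence the dilation $\|D\Phi^s_{G_0}(z)\cdot J\nabla G_0(z)\|/\|J\nabla G_0(z)\| = |J\nabla G_0(\Phi^s z)|/|J\nabla G_0(z)|$ can be as large as $O(E_d^{-1/2})$ and as small as $O(E_d^{1/2})$; the paper bounds it, more crudely but still polynomially in $E$, by $E^{\pm(1+\mu_7)}$ via the arc-length elongation estimate in the $\delta$-ball together with the passage time $\lambda_1^{-1}|\ln(\delta^{-2}E)|$. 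This is precisely the source of the exponent $1+\nu$ in the first line of (\ref{cylindereq11}). With your inputs — flow direction $O(1)$ and $\partial_E$ direction $E_d^{\pm 2\mu_6}$ — the conclusion would be the much stronger (and false) bound $E_d^{\pm 2\mu_6}$ on the tangential rate, and the phrase ``enlarging $\nu$ to absorb $\ldots$ the $O(1)$ oscillation'' does not explain where the order-$E_d^{\pm 1}$ factor comes from. The fix is to replace the $O(1)$ claim by the correct estimate of the variation of the arc-length element (i.e., of the speed along $z_E$) through the $\delta$-neighborhood, exactly as the paper does; then the crumpling bound and the symplectic-area argument produce the stated $E_d^{\pm(1+\nu)}$.
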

\begin{proof}
The cylinder $\Pi_{0,E',g}$ is a 2-dimensional symplectic sub-manifold, invariant for the Hamiltonian flow $\Phi^s_{ G_0}$.  However, it is not clear whether this cylinder admits the invariant splitting so that Formula (\ref{cylindereq11}) holds for the time-$2\pi$-map $\Phi_{ G_0}=\Phi^t_{ G_0}|_{t=2\pi}$. It is possible that
\begin{align*}
m(D\Phi_{ G_0}|_{T\Pi_{E_i,E_{i+1},g}})=&\inf\{|D\Phi_{ G_0}v|:v\in T\Pi_{E_i,E_{i+1},g}, |v|=1\}<1,\\
&\|D\Phi_{ G_0}|_{T\Pi_{E_i,E_{i+1},g}}\|>1,
\end{align*}
and we do not know the norm of $D\Phi_{ G_0}$ when it acts on the normal bundle.

From Formulae (\ref{cylindereq6}) and (\ref{cylindereq7}), one sees that the smaller the energy reaches, the stronger hyperbolicity the map $\Phi_{E,\delta}$ obtains. The strong hyperbolicity is obtained by passing through small neighborhood of the fixed point. However, on the other hand, the smaller the energy decreases, the longer the return time becomes.

Let $\Delta t_{E,k}$ denote the time interval such that, starting from $z^-_{E,k}$, the periodic orbit comes to $z^-_{E,k+1}$ after time $\Delta t_{E,k}$, then $\Delta t_{E,k}=\tau_{E,k}-\frac 1{\lambda_1}\ln E$
where $\tau_{E,k}$ is uniformly bounded (see Formula (\ref{regularenergyeq2})). For small $E>0$, emanating from any point $z$ on the minimal periodic orbit $z_{E}(s)$ and after a time $T_E$ of (\ref{passtime}), $\Phi_{ G_0}^s(z)$ passes through a neighborhood of the fixed point at least once. Therefore, the map $\Phi_{ G_0}^s|_{s\ge T_{E}}$ obtains strong hyperbolicity on normal bundle such as (\ref{cylindereq6}) and (\ref{cylindereq7}).

To see how the map $D\Phi^s_{G_0}$ acts on the tangent bundle, let us study how it elongates or shortens small arc of the periodic orbit $z_{E}(t)$. To pass through $\delta$-neighborhood of the origin along the orbit $z_E(t)$, it needs a time approximately equal to $|\lambda_1^{-1}\ln \delta^{-2}E|$. Restricted in $\delta$-neighborhood of the origin, there exists small $\mu_7>0$ such that
$$
|x(0)|e^{-(\lambda_1+\mu_7)t}\le|x(t)|\le |x(0)|e^{(\lambda_1+\mu_7)t},
$$
Therefore, the variation of the length of short arc is between $O(E^{1+\mu_7})$ and $O(E^{-1-\mu_7})$. Because of the relation between the symplectic area $\omega$ and the usual area $S$ of the cylinder, provided by the formula (\ref{cylindereq10}), the variation of $\|D\Phi^s_{ G_0}\|$, restricted on the tangent bundle of the cylinder, is between $O(E^{1+\mu_7+2\mu_6})$ and $O(E^{-1-\mu_7-2\mu_6})$, where we use the property that Hamiltonian flow preserves the symplectic structure. Due to periodicity, this lower and upper bound is independent of $s$. Therefore, the theorem is proved.
\end{proof}

For cylinder $\Pi_{E_i,E_{i+1},g}$ with $E_i\ge E'$, the normal hyperbolicity is obvious.
\begin{theo}
For $E'\le E_i<E_{i+1}\le E'_1$ and typical $V$, there exists $s_0>0$ depending on $E'$, $E'_1$ and $V$, such that the tangent bundle over the invariant cylinder $\Pi_{E_i,E_{i+1},g}$ admits $D\Phi^s_{ G_0}$-invariant splitting
$$
T_zM=T_zN^+\oplus T_z\Pi_{E_i,E_{i+1},g}\oplus T_zN^-
$$
some $\Lambda_2>\Lambda_1\ge 1$ such that the following hold for $s\ge s_0$
\begin{equation}\label{cylindereq12}
\begin{aligned}
\Lambda_1^{-1}<\frac {\|D\Phi^s_{ G_0}(z)v\|}{\|v\|}&<\Lambda_1,\qquad \forall\ v\in T_z\Pi_{E_i,E_{i+1},g},\\
\frac{\|D\Phi_{ G_0}^{s}(z)v\|}{\|v\|}&\le \Lambda_2,\qquad \forall\ v\in T_zN^+,\\
\frac{\|D\Phi_{ G_0}^{s}(z)v\|}{\|v\|}&\ge\Lambda_2^{-1}, \qquad \forall\ v\in T_zN^-.
\end{aligned}
\end{equation}
\end{theo}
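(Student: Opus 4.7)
The plan is to derive this statement as a direct corollary of Theorem \ref{mainth} combined with compactness. First, I would apply Theorem \ref{mainth} to $L_{G_0}$ on the compact interval $[E',E'_1]$ with the given class $g$: for $V$ in an open-dense set $\mathfrak{V}\subset C^r(\mathbb{T}^2,\mathbb{R})$, there are only finitely many bifurcation energies $\{E_i\}\subset[E',E'_1]$, and for every $E\in[E_i,E_{i+1}]$ there is a unique $(E,g)$-minimal periodic orbit $z_E(t)$ which is hyperbolic. Each piece
\[
\Pi_{E_i,E_{i+1},g}=\{z_E(t):E\in[E_i,E_{i+1}],\ t\in\mathbb{R}\}
\]
is then a compact $2$-dimensional invariant submanifold swept out by a smooth one-parameter family of hyperbolic periodic orbits, with period $T(E)$ continuous and uniformly bounded on $[E_i,E_{i+1}]$.

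For the splitting, the stable/unstable manifold theorem applied to each hyperbolic periodic orbit produces $D\Phi^s_{G_0}$-invariant stable and unstable subbundles $T_zN^+$ and $T_zN^-$ transverse to $T_z\Pi_{E_i,E_{i+1},g}$, together with $E$-dependent constants bounding $\|D\Phi^s_{G_0}|_{TN^+}\|$ from above and $\|D\Phi^s_{G_0}v\|/\|v\|$ from below for $v\in T_zN^-$. Because the periodic orbits depend smoothly on $E$ along each branch and $[E_i,E_{i+1}]$ is compact, these constants can be taken uniform in $E$; the continuous dependence extends to the endpoints, where the orbits themselves remain hyperbolic by Theorem \ref{mainth}. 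Choosing $s_0$ large enough then yields the two normal-bundle inequalities of (\ref{cylindereq12}) with a single constant $\Lambda_2$.

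For the tangential estimate, I would decompose $v\in T_z\Pi_{E_i,E_{i+1},g}$ as $v=aJ\nabla G_0(z)+b\xi_E(z)$, where $\xi_E$ is any smooth section of $T\Pi_{E_i,E_{i+1},g}$ transverse to the flow, for instance obtained from the parametrisation $(E,t)\mapsto z_E(t)$ by differentiating in $E$. Because $\Phi^s_{G_0}$ preserves energy and carries the flow direction at $z$ to the flow direction at $\Phi^s_{G_0}(z)$, $D\Phi^s_{G_0}|_{T\Pi}$ acts in the basis $(J\nabla G_0,\xi_E)$ as an upper-triangular matrix whose diagonal entries are ratios of $|J\nabla G_0|$ and of $|\xi_E|$ at two basepoints of the compact cylinder. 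Compactness yields uniform upper and lower bounds on these ratios, producing the first line of (\ref{cylindereq12}) with some $\Lambda_1\ge 1$; enlarging if necessary arranges $\Lambda_2>\Lambda_1$.

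The main obstacle I foresee is verifying that the splitting and its constants extend uniformly to the bifurcation endpoints $E_i$ and $E_{i+1}$, where two minimal periodic orbits coexist on the same energy level and the smooth family degenerates. Theorem \ref{mainth} guarantees that each such limit orbit is itself hyperbolic, so along any single branch the stable and unstable bundles, together with their hyperbolicity rates, depend continuously on $E$ up to the endpoint; uniformity on the closed interval $[E_i,E_{i+1}]$ then follows from compactness, and no genuine analytic difficulty beyond classical persistence of normal hyperbolicity arises.
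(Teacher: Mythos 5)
Your proposal follows essentially the same route as the paper: both proofs feed Theorem~\ref{mainth} into a compactness argument on $[E_i,E_{i+1}]$ to obtain uniform hyperbolicity along the family of $(E,g)$-minimal orbits, and both extract the splitting and rates from the Poincar\'e picture of a hyperbolic periodic orbit. The details differ slightly. For the normal bounds the paper works directly with the eigenvalues $\lambda_{1,E},\lambda_{2,E}$ of the Poincar\'e return map $\Phi_E$, picks a uniform $\lambda>1$ on the compact interval, and then writes down explicit constants $\Lambda_2=\lambda\bigl([\Lambda_1/\lambda]+2\bigr)$ and $s_i=\max_E T_E\bigl([\Lambda_1/\lambda]+2\bigr)$ to guarantee $\Lambda_2>\Lambda_1$; you invoke the stable/unstable manifold theorem and the same compactness, which achieves the same thing but leaves the final choice of $\Lambda_2>\Lambda_1$ to an unspecified enlargement. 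For the tangential bound the paper argues via preservation of the symplectic form restricted to the cylinder together with a lower bound of that area element by the Riemannian area, whereas you work with the explicit $(J\nabla G_0,\xi_E)$ basis and an upper-triangular structure.

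One point deserves a warning, though it applies to the paper's own write-up as well: neither the symplectic-area argument nor your upper-triangular argument actually controls the off-diagonal (shear) term in $D\Phi^s_{G_0}|_{T\Pi}$. For a cylinder foliated by periodic orbits with period $T(E)$, differentiating $\Phi^{T(E)}(z_E(0))=z_E(0)$ in $E$ shows that $D\Phi^{T(E)}$ sends $\partial_E z_E$ to $\partial_E z_E - T'(E)\,X(z_E)$, so the monodromy shears and the tangential operator norm grows linearly in the number of periods whenever $T'(E)\ne0$. Bounded diagonal entries of an upper-triangular matrix (your argument) or a bounded determinant (the paper's symplectic-area argument) do not by themselves bound the operator norm. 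What the normal-hyperbolicity statement really needs is that the tangential growth is subexponential (here, linear) while the normal rates are uniformly exponential; the standard fix is to state the tangential bound as $\Lambda_1(1+Cs)$, or to fix $s=s_0$. Since this is an informality inherited from the source, it is not a defect specific to your proposal, but it is the one step in the tangential estimate that a careful version of either proof must handle honestly.
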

\begin{proof} The cylinder is a symplectic sub-manifold, made up by minimal periodic orbits. Therefore, some $\Lambda_1\ge 1$ exists such that
$$
\Lambda^{-1}\|v\|\le\|\Phi^s_{ G_0}(z_E(t))v\|\le\Lambda\|v\|
$$
holds for any $s>0$ if $v$ is a vector tangent to $z_E$ at $z_E(t)$. Since the Hamiltonian flow preserves the symplectic form $\omega$, restricted on the cylinder which is an area element. Clearly, $|\omega|$ is lower bounded by usual area element $|S|$. It follows that the last formula holds for any vector tangent to the cylinder at $z_E(t)$. It verifies the first formula in (\ref{cylindereq11}). Let $\Sigma_{E,z}\subset G_0^{-1}(E)$ be a two-dimensional disk, transversally intersects the periodic orbit $z_E(t)$. The flow $\Phi_{ G_0}^t$ defines a Poincar\'e return map $\Phi_{E}$, the fixed point corresponds to the periodic orbit. Let $\lambda_{1,E}$ and $\lambda_{2,E}$ be the eigenvalues of the matrix $D\Phi_{E}$, it depends on the energy $E$. According to Theorem \ref{mainth}, each of these orbits is hyperbolic, namely, some $\lambda>1$ exists such that
$$
\min\{|\lambda_{1,E}|,|\lambda_{2,E}|\}\le\lambda^{-1}<\lambda\le\max\{|\lambda_{1,E}|,|\lambda_{2,E}|\},\qquad \forall\ E\in[E_i,E_{i+1}].
$$
Let $\Lambda_2=\lambda([\frac{\Lambda_1}{\lambda}]+2)$, then $\Lambda_2>\Lambda_1$. Let $T_E$ be the period of the orbit $z_E(t)$ and set
\begin{equation}\label{returntime}
s_i=\max_{E\in[E_i,E_{i+1}]}T_E\Big(\Big[\frac{\Lambda_1}{\lambda}\Big]+2\Big),
\end{equation}
the second and the third formulae in (\ref{cylindereq11}) holds for $\Phi_{ G_0}^s$ with $s\ge s_i$.
\end{proof}

The cylinder $\Pi_{0,E',g}$ may extend to the energy level $ G_0^{-1}(E_1+\Delta)$, where $\Pi_{E',E_1,g}$ is made up by $(E,g)$-minimal orbits, but Formula (\ref{cylindereq12}) applies to $\Pi_{E',E_1+\Delta,g}$. One can see that the whole cylinder $\Pi_{E_d,E_1+\Delta,g}$ is normal hyperbolic for $D\Phi_{ G_0}^s$ for $s\ge\max\{T_{E_d},s'\}$ where $s'$ is defined so that (\ref{cylindereq12}) holds for $\Pi_{E',E_1+\Delta,g}$ (cf. (\ref{returntime})).

\subsection{Bifurcation point}
Let $E_i<E_{i+1}$ be two adjacent bifurcation points, then each $G_0^{-1}(E)$ contains only one $(E,g)$-minimal orbit for $E\in(E_i,E_{i+1})$, denoted by $z_E$. Let $z^-_{E_i}=\lim_{E\downarrow E_i}z_E$, $z^+_{E_{i+1}}=\lim_{E\uparrow E_{i+1}}z_E$. These orbits make up an invariant cylinder
$$
\Pi_{E_i,E_{i+1},g}=\{(x_{E}(t),y_{E}(t)):[x_{E}]=g,E\in [E_i,E_{i+1}],t\in\mathbb{R}\}.
$$
By definition, at the  bifurcation point $E_i$, there are two minimal periodic orbits in typical case, denoted by $z_{E_i}^+(t)$ and $z_{E_i}^-(t)$ in the energy level $ G_0^{-1}(E_i)$. The orbit $z_{E_i}^+(t)$ makes up the upper boundary of $\Pi_{E_{i-1},E_i,g}$ and the orbit $z_{E_i}^-(t)$ makes up the lower boundary of $\Pi_{E_{i},E_{i+1},g}$.
Because of the implicit function theorem, there is continuation of hyperbolic periodic orbits which extends from $z_{E_i}^{+}(t)$ to higher energy, denoted by $z_E^+(t)$, also hyperbolic orbits extending from $z_{E_i}^{-}(t)$ to lower energy, denoted by $z_E^-(t)$. Those $z_{E}^{\pm}(t)$ are not in the Mather set unless $E=E_i$, the action along these orbits reaches local minimum instead of global minimum. In this way, we have the cylinder $\Pi_{E_{i-1}-\Delta,E_i+\Delta,g}$ which ranges from the energy level $ G_0^{-1}(E_{i-1}-\Delta)$ to the energy level $ G_0^{-1}(E_i+\Delta)$, as well as the cylinder $\Pi_{E_{i}-\Delta,E_{i+1}+\Delta,g}$ which ranges from the energy level $ G_0^{-1}(E_{i}-\Delta)$ to the energy level $ G_0^{-1}(E_{i+1}+\Delta)$. The normally hyperbolic invariant splitting (\ref{cylindereq12}) applies to the extended cylinders $\Pi_{E_{i-1}-\Delta,E_i+\Delta,g}$.

By the definition we have $\frac{\partial F}{\partial E}(x^+_{E_i}(0),E_i)\ge\frac{\partial F}{\partial E}(x^-_{E_i}(0),E_i)$. It is obviously a generic condition that

({\bf H3}). $\frac{\partial F}{\partial E}(x^+_{E_i}(0),E_i)>\frac{\partial F}{\partial E}(x^-_{E_i}(0),E_i)$.

\subsection{Persistence of NHICs: near double resonance}
We apply the theorem of normally hyperbolic manifold \cite{HPS} to obtain NHIC for the Hamiltonian $G_{\epsilon,0}$ of (\ref{Hamilton2}). We need the following preliminary lemma.

\begin{lem}\label{cylinderlem2}
Let the equation $\dot z=F_{\epsilon}(z,t)$ be a small perturbation of $\dot z=F_0(z,t)$, let $\Phi_{\epsilon}^t$ and $\Phi_0^t$ denote the flow determined by these two equations respectively. Then
$$
\|\Phi_{\epsilon}^t-\Phi_{0}^t\|_{C^1}\le \frac BA(1-e^{-At})e^{2At}
$$
where $A=\max_{t,\lambda=\epsilon,0}\|F_{\lambda}(\cdot,t)\|_{C^2}$ and $B=\max_t\|(F_{\epsilon}-F_0)(\cdot,t)\|_{C^1}$.
\end{lem}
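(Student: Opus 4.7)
The plan is to treat this as a standard Gronwall-type estimate for the flow and its derivative, done in two stages: first the $C^0$ distance of the flows, then the derivative $D\Phi_\lambda^t$ via the variational equation. The hypotheses are set up precisely so that $A$ controls both $\|F_\lambda\|_{C^1}$ and $\|DF_\lambda\|_{C^1}$ (hence $\|D^2 F_\lambda\|_{C^0}$), while $B$ controls the zeroth and first order discrepancy between $F_\epsilon$ and $F_0$.

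For the $C^0$ bound I would write the integral form
\[
\Phi_\epsilon^t(z)-\Phi_0^t(z)=\int_0^t\bigl[F_\epsilon(\Phi_\epsilon^s(z),s)-F_0(\Phi_0^s(z),s)\bigr]\,ds
\]
and split the integrand as $F_\epsilon(\Phi_\epsilon^s)-F_\epsilon(\Phi_0^s)$ plus $F_\epsilon(\Phi_0^s)-F_0(\Phi_0^s)$, the former bounded by $A\|\Phi_\epsilon^s-\Phi_0^s\|$ and the latter by $B$. Gronwall's inequality then yields
\[
\|\Phi_\epsilon^t(z)-\Phi_0^t(z)\|\le \tfrac{B}{A}(e^{At}-1).
\]

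For the $C^1$ part, let $M_\lambda(t)=D\Phi_\lambda^t(z)$. These matrices satisfy the variational equations $\dot M_\lambda=DF_\lambda(\Phi_\lambda^t,t)M_\lambda$ with $M_\lambda(0)=I$, from which Gronwall gives $\|M_\lambda(t)\|\le e^{At}$. Subtracting the two variational equations, I rewrite
\[
\dot M_\epsilon-\dot M_0=DF_\epsilon(\Phi_\epsilon^t)(M_\epsilon-M_0)+\bigl[DF_\epsilon(\Phi_\epsilon^t)-DF_0(\Phi_0^t)\bigr]M_0,
\]
and split the bracket into $DF_\epsilon(\Phi_\epsilon^t)-DF_\epsilon(\Phi_0^t)$ (bounded by $A\|\Phi_\epsilon^t-\Phi_0^t\|$ using $\|D^2F_\epsilon\|\le A$) plus $DF_\epsilon(\Phi_0^t)-DF_0(\Phi_0^t)$ (bounded by $B$). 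Inserting the $C^0$ bound just obtained and the estimate $\|M_0\|\le e^{At}$, the forcing term is bounded by $(B+B(e^{At}-1))e^{At}=Be^{2At}$, and a second application of Gronwall produces the stated bound $\tfrac{B}{A}(1-e^{-At})e^{2At}$ once the integrating factor $e^{-At}$ is absorbed.

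The computation is entirely routine; the only slightly delicate point is to bookkeep the factors so that the final inequality matches exactly the asymmetric form $\tfrac{B}{A}(1-e^{-At})e^{2At}$ rather than the naive $\tfrac{B}{A}(e^{2At}-1)e^{At}$ one might first write. Both bounds are of the same order, so the exact shape comes from performing the Gronwall step on $e^{-At}(M_\epsilon-M_0)$ (i.e.\ using the integrating factor) rather than on $M_\epsilon-M_0$ directly. No other subtlety is expected; this lemma is purely a quantitative continuous dependence estimate and is invoked later to compare the perturbed flow $\Phi_{G_\epsilon}^t$ with the unperturbed $\Phi_{G_0}^t$ when applying the persistence theorem for normally hyperbolic invariant manifolds.
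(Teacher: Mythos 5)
Your proposal is correct and follows essentially the same two-stage Gronwall argument as the paper: a $C^0$ bound $\|\Phi^t_\epsilon-\Phi^t_0\|\le\frac BA(e^{At}-1)$, then a second Gronwall estimate on the difference of the variational flows with forcing term $Be^{2At}$, arriving at $\frac BA(1-e^{-At})e^{2At}$. The paper phrases the first stage via a mean-value expansion of $F_\epsilon$ and the second via a substitution $\Delta\delta z=y+\frac BA\|v\|e^{2At}$ rather than the integrating-factor notation, but the decomposition of the forcing term into a second-derivative contribution and a first-derivative discrepancy is identical to yours.
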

\begin{proof} Let $z_{\lambda}(t)$ denote the solution of the equations $\dot z=F_{\lambda}(z,t)$ for $\lambda =\epsilon,0$ respectively, and $z_{\epsilon}(0)=z(0)$. Let $\Delta z(t)=z_{\epsilon}(t)-z(t)$, then $\Delta z(0)=0$ and
$$
\Delta\dot z=\partial_zF_{\epsilon}((\nu z+(1-\nu)z_{\epsilon})(t),t)\Delta z+(F_{\epsilon}-F_0)(z(t),t)
$$
where $\nu=\nu(t)\in [0,1]$. Therefore, one has
$$
\|\Delta\dot z\|\le\max\|\partial_zF_{\epsilon}\|\|\Delta z\|+\max\|F_{\epsilon}-F_0\|.
$$
Let $\Delta z=y-\frac BA$, we have $\dot y\le Ay$. It follows from Gronwell's inequality that
$$
\|\Delta z(t)\|\le\frac BA(e^{At}-1).
$$

Along the orbit $z_{\lambda}(t)$, the differential of the flow $\Phi^t_{\lambda}$  obviously satisfies the equation
$$
\frac d{dt}D\Phi^t_{\lambda}=\partial_zF_{\lambda}(z_{\lambda}(t),t)D\Phi^t_{\lambda},\qquad \lambda=\epsilon, 0.
$$
Therefore, for each tangent vector $v$ attached to $z_{\lambda}(0)$ one has
$$
\|D\Phi^t_{\lambda}v\|\le\|v\|e^{At}.
$$

To study the differential of $\Phi_{\epsilon}^t-\Phi_{0}^t$, let us consider the equation of secondary variation. Let $\delta z_{\lambda}$ be the solution of the variational equation $\delta\dot z_{\lambda}=\partial_zF_{\lambda}(z_{\lambda}(t),t)\delta z_{\lambda}$ for $\lambda=\epsilon,0$ respectively, where $z_{\lambda}(t)$ solves the equation $\dot z_{\lambda}= F_{\lambda}(z_{\lambda},t)$ and $z_{\epsilon}(0)=z(0)$.  To measure the size $\Delta\delta z=\delta z_{\epsilon}-\delta z$ with the condition $z_{\epsilon}(0)=z(0)$, we make use of the relations such as $v=\delta z_{\epsilon}(0)=\delta z(0)$, $\|\delta z(t)\|\le\|v\|e^{At}$ and find that
\begin{align*}
\Big\|\frac {d(\Delta\delta z)}{dt}\Big\|\le &\max\|\partial _zF_{\epsilon}\|\|\Delta\delta z\| +\max\|\partial^2_zF\|\|\Delta z(t)\|\|\delta z(t)\| \\
&+\max\|\partial_z(F_{\epsilon}-F)\|\|\delta z(t)\| \\
\le &A\Delta\delta z+B\|v\|e^{2At}.
\end{align*}
Let $\Delta\delta z=y+\frac BA\|v\|e^{2At}$, we have $\dot y\le Ay$. Using Gronwell's inequality again, one obtains an upper bound of the variation of the differential
$$
\|\Delta\delta z(t)\|\le\frac BA\|v\|(1-e^{-At})e^{2At}.
$$
Note that $v$ represents initial tangent vector, it completes the proof.
\end{proof}
Let us apply this lemma to study the invariant cylinders of the Hamiltonian $G_{\epsilon,0}$. A sub-manifold $N$ is called overflowing invariant for a flow $\Phi^s$ if, for each $z\in\mathrm{int}N$, the orbit $\Phi^s(z)$ either stays in $N$ forever, or by passing through $\partial N$ to leave. We use $\Phi^{s,s_0}$ to denote the map from the time $s_0$-section to the time $s$-section. A sub-manifold $N'$ is called a $\delta$-deformation of another sub-manifold $N$ if $d_H(N,N')\le\delta$, where $d_H$ denotes Hausdorff distance.
\begin{theo}\label{cylinderforepsilon}
In the extended phase space $T^*\mathbb{T}^2\times\sqrt{\epsilon}\mathbb{T}$, the Hamiltonian flow $\Phi^{s}_{G_{\epsilon,0}}$ admits overflowing invariant cylinders $\tilde\Pi_{\epsilon^d,E_1+\Delta-\epsilon^d,g}$, $\tilde\Pi_{E_i-\Delta+\epsilon^d,E_{i+1}+\Delta-\epsilon^d,g}$ which are the $\epsilon^{\sigma}$-deformation of the cylinder $\Pi_{\epsilon^d,E_1+\Delta-\epsilon^d,g} \times\sqrt{\epsilon}\mathbb{T}$, $\Pi_{E_i-\Delta+\epsilon^d,E_{i+1}+\Delta-\epsilon^d,g} \times\sqrt{\epsilon}\mathbb{T}$ respectively, if
\begin{equation}\label{d>0}
0<d<\min\Big\{\frac{\lambda_1}{12\max_x\sqrt{\|A\|^2+\|\partial^2V\|^2}},\frac 14\Big\}\sigma
\end{equation}
and $\epsilon\ge 0$ is sufficiently small. The cylinder $\tilde\Pi_{\epsilon^d,E_1+\Delta-\epsilon^d,g}$ admits normally hyperbolic invariant splitting of $($\ref{cylindereq11}$)$ for the map $\Phi^{s,s_0}_{G_{\epsilon}}$ with $s-s_0=\frac 2{\lambda_1}|\ln\epsilon^{3d}|$; the cylinder $\tilde\Pi_{E_i-\Delta+\epsilon^d,E_{i+1}+\Delta-\epsilon^d,g}$ admits normally hyperbolic invariant splitting of $($\ref{cylindereq12}$)$ for $\Phi^{s,s_0}_{G_{\epsilon}}$ where $s-s_0$ is given by $($\ref{returntime}$)$, independent of $\epsilon$.
\end{theo}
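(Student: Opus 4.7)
The plan is to view $G_{\epsilon,0}$ as a small time-periodic perturbation of the autonomous Hamiltonian $G_0$ studied above and to apply the Hirsch--Pugh--Shub persistence theorem to each piece of NHIC established for $G_0$. The quantitative ingredient is Lemma \ref{cylinderlem2}: writing the perturbation as $\epsilon^\sigma R_{\epsilon,0}$, the associated vector-field correction satisfies $B:=\|F_\epsilon-F_0\|_{C^1}=O(\epsilon^\sigma)$ while $K:=\max\sqrt{\|A\|^2+\|\partial^2V\|^2}$ bounds the $C^2$-norm of both vector fields (up to lower-order terms coming from $\epsilon^\sigma R_{\epsilon,0}$). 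Hence for any time interval of length $t$,
$$
\|\Phi_{G_{\epsilon,0}}^{s_0+t,s_0}-\Phi_{G_0}^{s_0+t,s_0}\|_{C^1}\le C\,B\,e^{2Kt}.
$$

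For the \emph{outer} cylinders $\Pi_{E_i-\Delta+\epsilon^d,E_{i+1}+\Delta-\epsilon^d,g}$ with $E_i\ge E'$, the return time $t=s-s_0$ prescribed by (\ref{returntime}) is independent of $\epsilon$, so the two flows differ in $C^1$ by $O(\epsilon^\sigma)$. Since the hyperbolic rates in (\ref{cylindereq12}) are uniform in $\epsilon$, HPS applies directly and produces an invariant cylinder $\tilde\Pi$ that is $O(\epsilon^\sigma)$-close to the unperturbed one, with hyperbolic splitting obeying (\ref{cylindereq12}) (with slightly perturbed constants). Overflowing invariance is enforced by trimming $\epsilon^d$ off each boundary energy, which exceeds the $C^0$ deviation of the cylinder.

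The \emph{inner} cylinder $\tilde\Pi_{\epsilon^d,E_1+\Delta-\epsilon^d,g}$ is the delicate case: we must work at the time scale $t=s-s_0=\frac{2}{\lambda_1}|\ln\epsilon^{3d}|=\frac{6d}{\lambda_1}|\ln\epsilon|$, chosen so that each orbit of $\Phi_{G_0}^{t}$ on the cylinder visits a small neighborhood of the fixed point several times and accumulates the hyperbolicity recorded in (\ref{cylindereq11}). For this $t$, Lemma \ref{cylinderlem2} yields
$$
\|\Phi_{G_{\epsilon,0}}^{s,s_0}-\Phi_{G_0}^{s,s_0}\|_{C^1}\le C\,\epsilon^{\sigma-12Kd/\lambda_1},
$$
and the first clause of (\ref{d>0}), namely $d<\frac{\lambda_1}{12K}\sigma$, makes this exponent strictly positive, so the $C^1$ perturbation is a positive power of $\epsilon$. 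At the worst boundary $E=\epsilon^d$, the unperturbed normal expansion in (\ref{cylindereq11}) grows like $\epsilon^{-d(\lambda_2/\lambda_1-\nu)}$ while tangential rates stay in $[\epsilon^{d(1+\nu)},\epsilon^{-d(1+\nu)}]$; using $\lambda_2/\lambda_1-\nu>1+\nu$, the dominance gap is a fixed \emph{negative} power of $\epsilon$, which dominates the positive-power $C^1$ perturbation just estimated. HPS then produces an overflowing invariant cylinder, $\epsilon^\sigma$-close to $\Pi_{\epsilon^d,E_1+\Delta-\epsilon^d,g}\times\sqrt{\epsilon}\mathbb{T}$, carrying the splitting (\ref{cylindereq11}) for $\Phi_{G_{\epsilon,0}}^{s,s_0}$. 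The second clause $d<\sigma/4$ in (\ref{d>0}) guarantees that the resulting $C^0$ deviation remains much smaller than the trimming $\epsilon^d$, so that overflowing invariance at $\{E=\epsilon^d\}$ genuinely holds.

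The principal obstacle, and the place where (\ref{d>0}) is forced, is the tension between the \emph{logarithmic} return time needed to accumulate hyperbolicity near the fixed point and the \emph{exponential} Gronwall amplification of the perturbation in Lemma \ref{cylinderlem2}. Once the numerology on $d$ is fixed, the remaining work is to verify the standard cone-field and spectral-gap hypotheses of HPS on each piece, and to trim the energy boundaries so that inward-invariance becomes overflowing invariance.
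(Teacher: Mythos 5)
Your proposal correctly identifies the two central ingredients — the Gronwall estimate of Lemma \ref{cylinderlem2} and persistence of normally hyperbolic manifolds — and the numerology on the first clause of (\ref{d>0}) matches the paper: taking $s-s_0=\tfrac{2}{\lambda_1}|\ln\epsilon^{3d}|$ gives $e^{A(s-s_0)}=\epsilon^{-6Ad/\lambda_1}$, so the $C^1$-distance between the two flows is $O(\epsilon^{\sigma-12C_{12}d/\lambda_1})$, which tends to $0$ exactly under $d<\lambda_1\sigma/(12C_{12})$. That much is right.

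The gap is in how you invoke the persistence theorem. The cylinder $\Pi_{E_d,E_1+\Delta,g}\times\sqrt{\epsilon}\mathbb{T}$ is a compact manifold \emph{with boundary}, and it is neither invariant nor overflowing-invariant for the perturbed flow $\Phi_{G_{\epsilon,0}}^{s,s_0}$: the time-dependent perturbation drives orbits across energy levels, so they can exit through the boundary. The HPS/BLZ machinery requires the unperturbed object to be either boundaryless, or overflowing (or inflowing) invariant, or one must first modify the vector field near the boundary. Saying that ``overflowing invariance is enforced by trimming $\epsilon^d$ off each boundary energy'' does not resolve this, because trimming produces a smaller boundary and does not make the smaller manifold invariant for $\Phi_{G_{\epsilon,0}}$ either; one cannot even state that ``HPS produces an overflowing invariant cylinder'' without first arranging a flow for which the original cylinder is invariant. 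The paper's essential device, which your argument omits, is the explicit modification $G_{\epsilon,0}\to G'_{\epsilon}$ of (\ref{modification}): cutoff functions $\rho_1,\rho_2$ turn off the perturbation on the energy shells $[\epsilon^{3d},\tfrac12\epsilon^d]$ and $[E_1+\Delta-\epsilon^d,E_1+\Delta]$, so that $G'_{\epsilon}=G_0$ near the boundary and the boundary circles of $\Pi_{E_d,E_1+\Delta,g}$ are \emph{exactly} preserved by $\Phi_{G'_{\epsilon}}$. The persistence theorem is applied to this modified flow, producing an invariant cylinder $\tilde\Pi_{E_d,E_1+\Delta,g}$, and only then does one restrict to the region $G_0\in[\epsilon^d,E_1+\Delta-\epsilon^d]$ where $G'_{\epsilon}\equiv G_{\epsilon,0}$ to obtain an \emph{overflowing} invariant cylinder for the true flow. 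That restriction is the actual content of ``trimming,'' and it is only meaningful after the modification step.

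Two smaller points. First, your explanation that $d<\sigma/4$ ``guarantees the $C^0$-deviation is much smaller than the trimming $\epsilon^d$'' does not line up with the estimate you wrote: the deviation you obtained is $\epsilon^{\sigma-12Kd/\lambda_1}$, and comparing this to $\epsilon^d$ gives a constraint involving $K$ and $\lambda_1$, not the clean $\sigma/4$. The second clause in (\ref{d>0}) is rather tied to keeping the cutoff term $\epsilon^\sigma\rho_iR_{\epsilon,0}$ small in a high enough norm: differentiating $\rho_1=\rho\bigl(2(G_0-\epsilon^{3d})/(\epsilon^d-2\epsilon^{3d})\bigr)$ costs a factor $\epsilon^{-d}$ per derivative, so one loses powers of $\epsilon^{-d}$ in the smoothness estimate of $G'_\epsilon-G_0$ that enters the NHM theorem. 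Second, you sketch a cone/spectral-gap comparison using $\lambda_2/\lambda_1-\nu>1+\nu$; that is consistent with Theorem \ref{cylinderthm1}, but the paper does not re-derive it here — it simply cites the theorem once the modified flow is $C^1$-close — so this part of your argument is extra work rather than a gap.
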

\begin{proof}
Considering $R_{\epsilon,0}$ as the function of $(x,y)$ and treating $\theta$ as parameter, we find that there exists some constant $C_{11}=\max_{\theta}\|R_{\epsilon,0}(\cdot,\theta)\|_{C^1}$ such that
$$
\max_{\theta}\|J\nabla G_0-J\nabla G_{\epsilon,0}\|_{C^1}\le C_{11}\epsilon^{\sigma}.
$$
Let $C_{12}=\max_x\sqrt{\|A\|^2+\|\partial^2V\|^2}$, for $s-s_0=\frac 2{\lambda_1}|\ln\epsilon^{3d}|$ one obtains from Lemma \ref{cylinderlem2} that
$$
\|\Phi^{s,s_0}_{ G_0}-\Phi^{s,s_0}_{G_{\epsilon,0}}\|_{C^1}\le \frac{C_{11}}{C_{12}}\epsilon^{\sigma-\frac {12C_{12}d}{\lambda_1}}.
$$
If the condition $0<d<\frac{\lambda_1\sigma}{12C_{12}}$ holds, then $\|\Phi^{s,s_0}_{ G_0} -\Phi^{s,s_0}_{G_{\epsilon,0}}\|_{C^1}\to 0$ as $\epsilon\to 0$. It allows one to apply the theorem of normally hyperbolic manifold to obtain the existence of invariant cylinder:

We consider a piece of hyperbolic cylinder $\Pi_{E_d,E_1+\Delta,g}\subset\Pi_{0,E_1+\Delta,g}$ with $E_d=\epsilon^{3d}$ for some small $d>0$. Since these hyperbolic properties are posed for the map $\Phi^{s,s_0}_{ G_0}$ with large $s-s_0$, one has to measure how large the quantity $\|\Phi_{ G_0}^{s,s_0}-\Phi_{G_{\epsilon,0}}^{s,s_0}\|$ will be. As $ G_0$ is autonomous, $\Phi_{ G_0}^{s,s_0}=\Phi_{ G_0}^{s-s_0}$.

Note that $\Pi_{E_d,E_1+\Delta,g}$ is a cylinder with boundary, normally hyperbolic and invariant for $\Phi^s_{ G_0}$, with $s=\frac 2{\lambda_1}|\ln \epsilon^{3d}|$, we do not expect that the whole cylinder survives small perturbation, it may lose some part close to the boundary. As the first step to measure to what range the cylinder survives, we modify the Hamiltonian $G_{\epsilon,0}$. Let $\rho$ be $C^2$-function such that $\rho(\mu)=1$ for $\mu\ge 1$ and $\rho(\mu)=0$ for $\mu\le 0$. Let $E_{1}^{\Delta}=E_1+\Delta$, we introduce
\begin{equation}\label{modification}
G'_{\epsilon}=\begin{cases}
G_0(x,y)+\epsilon^{\sigma}\rho_1(x,y)R_{\epsilon,0}(x,y,\theta), &\mathrm{if}\  G_0(x,y)\in[\epsilon^{3d},\frac 12\epsilon^d],\\
G_0(x,y)+\epsilon^{\sigma}\rho_2(x,y)R_{\epsilon,0}(x,y,\theta),&\mathrm{if}\  G_0(x,y)\in[E_{1}^{\Delta}-\epsilon^d,E_{1}^{\Delta}],\\
G_{\epsilon,0}(x,y,\theta), &\mathrm{elsewhere}
\end{cases}
\end{equation}
where $\rho_1(x,y)=\rho(2\frac{ G_0(x,y)-\epsilon^{3d}}{\epsilon^d-2\epsilon^{3d}})$, $\rho_2=1-\rho(\frac{ G_0(x,y)-(E_1+\Delta-\epsilon^d)}{\epsilon^d})$. So, $\|G'_{\epsilon,0}- G_0\|_{C^2}\ll1$ provided $d<\sigma$ and $\epsilon\ll1$. Clearly, the cylinder $\Pi_{E_d,E_1+\Delta,g}$ survives the perturbation $\Phi^{s,s_0}_{ G_0}\to\Phi^{s,s_0}_{G'_{\epsilon}}$ and the boundary of $\Pi_{E_d,E_1+\Delta,g}$ remains unchanged for $\Phi^{s,s_0}_{G'_{\epsilon}}$. The survived cylinder in the extended phase space $\mathbb{T}^2\times\mathbb{R}^2\times\sqrt{\epsilon}\mathbb{T}$ is denoted by $\tilde\Pi_{E_d,E_1+\Delta,g}$.

Restricted on the cylinder $\tilde\Pi_{E_d,E_1+\Delta,g}\cap\{(x,y,\theta): G_0(x,y)\in[\epsilon^d,E_1+\Delta-\epsilon^d]\}$ as well as $\tilde\Pi_{E_i-\Delta,E_{i+1}+\Delta,g}\cap \{(x,y,\theta): G_0(x,y)\in[E_i-\Delta+\epsilon^d,E_{i+1}+\Delta-\epsilon^d]\}$, we find $G_{\epsilon,0}=G'_{\epsilon}$. It implies the existence of overflowing invariant cylinder $\tilde\Pi_{\epsilon^d,E_1+\Delta-\epsilon^d,g}$ and $\tilde\Pi_{E_i-\Delta+\epsilon^d,E_{i+1}+\Delta-\epsilon^d,g}$ for $\Phi_{G_{\epsilon}}^s$. The normally hyperbolic invariant splitting is an application of the theorem of normally hyperbolic manifold.
\end{proof}

\section{Transition of NHIC from double to single resonance}
\setcounter{equation}{0}
Along the resonant path $\Gamma^*\cap\{|p-p''|\le\epsilon^{\sigma}\}$ we have chosen the points $\{p'_i\}$ such that $p'_0=p''$, $\partial_1 h(p'_i)=Ki\sqrt{\epsilon}$, where $K\in\mathbb{Z}$. There are as many as $2([K^{-1}\epsilon^{\sigma-\frac 12}]+1)$ such points. What we studied in the last section is about the disk which is centered at $p=p''$, the double resonant point. In this section, we consider the disks which are ``quite away from" the double resonance in the sense that $Ki\gg 1$. Let $Ki=\Omega_i$, we rewrite the Hamiltonian of (\ref{2dHmil}) $G_{\epsilon,i}(x,y,\theta)=G_{i}(x,y)+\epsilon^{\sigma}R_{\epsilon,i}(x,y,\theta)$ where
\begin{equation}\label{barGepsilon}
G_{i}(x,y)=\Omega_i y_1+\frac 12\langle Ay,y\rangle-V(x).
\end{equation}
We shall show that the NHICs in the such cube looks more and more like the NHICs in the case of single resonance when the quantity $\Omega_i$ approaches infinity. As the first step, let us consider the Hamiltonian $G_{i}$. Applying Theorem \ref{mainth} proved in \cite{CZ1}, we find that all $(E,g)$-minimal periodic orbits make up some pieces of normally hyperbolic invariant cylinders. However, it is not enough to study the persistence of these NHICs under the small perturbation $G_{i}\to G_{\epsilon,i}=G_{i}+\epsilon^{\sigma}R_{\epsilon,i}$, since the number of cubes approaches infinity as $\epsilon\to 0$. We need to show that, for generic $V$, the hyperbolicity is independent of the numbers.

For a function $V\in C^r(\mathbb{T}^2,\mathbb{R})$, we define
$$
[V](x_2)=\frac 1{2\pi}\int_0^{2\pi}V(x_1,x_2)dx_1.
$$
A set $\mathfrak{V}_{\infty}\subset C^r(\mathbb{T}^2,\mathbb{R})$ is defined so that for each $V\in\mathfrak{V}_{\infty}$, the function $[V]$ has a unique minimal point which is non-degenerate, i.e. $\frac{d^2}{dx_2^2}[V](x_2)>0$ if $x_2$ is the minimal point. Obviously, the set $\mathfrak{V}_{\infty}$ is open-dense in $C^r(\mathbb{T}^2,\mathbb{R})$ with $r\ge 2$.

To denote a cylinder for $G_i$ and for $G_{\epsilon,i}$ respectively, we add superscript $^i$ and $^{i,\epsilon}$ to the usual notation of cylinder $\Pi_{a,b,g}\to \Pi_{a,b,g}^i,\Pi_{a,b,g}^{i,\epsilon}$.

\begin{theo}\label{LmHighEnergy}
Given a potential $V\in\mathfrak{V}_{\infty}$ and a number $\bar K>1$, there exists a suitably large $\Omega^*>0$ such that for $\Omega_i\ge\Omega^*$, the Hamiltonian flow $\Phi^t_{G_{i}}$ of $($\ref{barGepsilon}$)$ admits an invariant cylinder $\Pi_{0,\bar K\Omega_i,g}^i$ made up by $(E,g)$-minimal orbits which lie on the energy level $G_{i}^{-1}(E\Omega_i)$ with $E\in[0,\bar K]$.

Moreover, the tangent bundle of $\mathbb{T}^2$ over $\Pi_{0,\bar K\Omega_i,g}^i$ admits the invariant splitting:
$$
T_zM=T_zN^+\oplus T_z\Pi_{0,\bar K\Omega_i,g}\oplus T_zN^-,
$$
some numbers $\Lambda>\lambda\ge 1$, and an integer $k\ge 1$ exist such that
\begin{equation}\label{wuqiongyuan}
\begin{aligned}
\lambda^{-1}\|v\|<\|D\Phi^{2k\pi}_{G_{i}}(z)v\|&<\lambda\|v\|,\qquad \forall\ v\in T_z\Pi_{0,\bar K\Omega_i,g}^i,\\
\|D\Phi_{G_{i}}^{2k\pi}(z)v\|&\le \Lambda^{-1}\|v\|,\qquad \forall\ v\in T_zN^+,\\
\|D\Phi_{G_{i}}^{2k\pi}(z)v\|&\ge\Lambda\|v\|, \qquad \forall\ v\in T_zN^-.
\end{aligned}
\end{equation}
holds for any large $\Omega_i\ge\Omega^*$.
\end{theo}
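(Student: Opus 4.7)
The plan is to reduce $G_i$ to a $1\tfrac{1}{2}$-degree-of-freedom system by using $x_1$ as time on the energy surface, and then to apply averaging in the large-$\Omega_i$ limit. Since $\partial_{y_1}G_i = \Omega_i + O(1)$ is strictly positive on $G_i^{-1}(E\Omega_i)$ for $E\in[0,\bar K]$ and $\Omega_i$ large, one solves $G_i(x,y)=E\Omega_i$ smoothly for $y_1 = -H_{\mathrm{red}}(x_1,x_2,y_2;E)$ and obtains
\[
H_{\mathrm{red}}(x_1,x_2,y_2;E) = -E + \Omega_i^{-1}\tilde H(x_1,x_2,y_2;E) + O(\Omega_i^{-2}),
\]
where $\tilde H = \tfrac12 A_{22}y_2^2 + A_{12}E\,y_2 - V(x_1,x_2) + \mathrm{const}(E)$. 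Treated with $x_1$ as time, the time-$2\pi$ Poincar\'e map $P_E$ of $H_{\mathrm{red}}$ has fixed points in bijection with the periodic orbits of $\Phi^t_{G_i}$ that realise the class $g$ on the level $G_i^{-1}(E\Omega_i)$.

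Averaging $\tilde H$ over the fast angle $x_1$ produces the autonomous Hamiltonian
\[
\bar H(x_2,y_2;E) = \tfrac12 A_{22}y_2^2 + A_{12}E\,y_2 - [V](x_2) + \mathrm{const}(E).
\]
By $V\in\mathfrak V_\infty$, $[V]$ has a unique non-degenerate minimum at some $x_2^*$. The critical point of $\bar H$ corresponding to that minimum, namely $(x_2^*,-A_{12}E/A_{22})$, has Hessian $\mathrm{diag}(-[V]''(x_2^*),A_{22})$ (indefinite), so it is a hyperbolic saddle of the flow of $\bar H$ with linearised eigenvalues $\pm\sqrt{A_{22}[V]''(x_2^*)}$, independent of both $\Omega_i$ and $E$. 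A single averaging normal-form step conjugates $H_{\mathrm{red}}$ to $-E + \Omega_i^{-1}\bar H + O(\Omega_i^{-2})$, and a uniform implicit-function argument produces, for each $E\in[0,\bar K]$ and all $\Omega_i$ above some $\Omega^*$, a hyperbolic fixed point $z_E$ of $P_E$ close to $(x_2^*,-A_{12}E/A_{22})$, depending smoothly on $E$. The associated periodic orbits of $\Phi_{G_i}^t$ then assemble into the cylinder $\Pi_{0,\bar K\Omega_i,g}^i$. That these orbits are $(E,g)$-minimal follows because $x_2\equiv x_2^*$ is the unique minimiser of the averaged action in the class $g$, which together with Theorem~\ref{mainth} rules out any competing minimiser in the true system.

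To obtain (\ref{wuqiongyuan}) I translate back to physical time. One $x_1$-period of $P_E$ corresponds to physical time $\Delta t = 2\pi/\Omega_i + O(\Omega_i^{-2})$, and the normal multipliers of $P_E$ at $z_E$ are $1\pm 2\pi\Omega_i^{-1}\sqrt{A_{22}[V]''(x_2^*)} + O(\Omega_i^{-2})$. Iterating $P_E$ over physical time $2k\pi$ therefore corresponds to $k\Omega_i\bigl(1+O(\Omega_i^{-1})\bigr)$ Poincar\'e steps, whence the normal multipliers of $\Phi_{G_i}^{2k\pi}$ at the orbit tend to $\exp(\pm 2k\pi\sqrt{A_{22}[V]''(x_2^*)})$ as $\Omega_i\to\infty$, a quantity independent of $\Omega_i$. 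Choosing a fixed integer $k$ with $2k\pi\sqrt{A_{22}[V]''(x_2^*)}>\log\Lambda$ yields the two normal estimates in (\ref{wuqiongyuan}). The tangential estimate is automatic: the cylinder is a two-dimensional symplectic submanifold foliated by periodic orbits, and preservation of the symplectic area restricted to it prevents expansion or contraction beyond a fixed factor over a bounded physical time.

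The principal obstacle is uniformity as $\Omega_i\to\infty$: the multipliers of a single Poincar\'e step tend to $1$, so the hyperbolicity must be accumulated over physical time and the averaging must preserve the hyperbolic gap. This demands, on the one hand, that the averaging remainder be $O(\Omega_i^{-2})$ after one normal-form step (so that accumulating $\Omega_i$ Poincar\'e iterations produces only an $O(\Omega_i^{-1})$ deviation from the averaged flow, vanishing in the limit), and on the other hand, $C^2$-uniform control of $z_E$ and of its stable/unstable directions as $E$ varies over $[0,\bar K]$. The open-dense hypothesis $V\in\mathfrak V_\infty$ enters precisely here: it ensures that the leading-order averaged saddle is non-degenerate and that no bifurcation can arise in the large-$\Omega_i$ regime.
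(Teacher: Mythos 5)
Your route --- reduce on the energy surface by using $x_1$ as time, then run a classical averaging normal form on the $1\tfrac12$-DOF Hamiltonian $H_{\mathrm{red}}$ --- is genuinely different from the paper's, which works entirely on the Lagrangian side. The paper introduces the action functional $F(x_2,\Omega_i,E)$ for the reduced Lagrangian, decomposes it as $F=F_0+\Omega_i^{-1}F_R$ with $F_0$ the averaged action, proves $F_R$ is uniformly $C^2$-bounded (Lemma \ref{flatlemma}, via integration by parts of the oscillatory term $\Omega_i(V-[V])$ and smooth dependence of the boundary-value problem), and shows the minimizer converges to the constant curve $x_2\equiv x_2^*$ (Proposition \ref{progamma}); non-degeneracy of the $F_0$-minimum then gives a uniform lower bound $\partial^2_{x_2}F\ge\mu$, which is converted to hyperbolicity of the periodic orbit by a barrier-function comparison. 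Your phase-space averaging is in some ways a cleaner way to read off the asymptotic rate: the normal multipliers of $\Phi^{2k\pi}_{G_i}$ tending to $\exp(\pm 2k\pi\sqrt{A_{22}[V]''(x_2^*)})$ is exactly the right limit and is the dynamical counterpart of the paper's $\partial^2_{x_2}F_0\ge 2\mu$ estimate; the eigenvalue and reduction computations are correct.

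There is, however, one genuine gap. You claim the orbit found by the implicit-function argument is $(E,g)$-minimal ``because $x_2\equiv x_2^*$ is the unique minimiser of the averaged action in the class $g$, which together with Theorem \ref{mainth} rules out any competing minimiser in the true system.'' Theorem \ref{mainth} is a genericity statement about a single, fixed Lagrangian and gives no control uniform in the parameter $\Omega_i\to\infty$; it cannot exclude a competing minimizer far from $x_2^*$ in the true system whose hyperbolicity degenerates as $\Omega_i$ grows. What you actually need is the statement Proposition \ref{progamma} proves: that the full action $F=F_0+\Omega_i^{-1}F_R$ is a uniformly small ($C^2$) perturbation of $F_0$, whose minimum at $x_2^*$ is unique and non-degenerate by $V\in\mathfrak V_\infty$, so that the true minimizer converges to $x_2^*$ with second-order estimates uniform in $E$ and $\Omega_i$. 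Replace the appeal to Theorem \ref{mainth} with this. A secondary issue, which you flag yourself as the ``principal obstacle'': the claim that an $O(\Omega_i^{-2})$ per-step averaging remainder accumulates to only $O(\Omega_i^{-1})$ deviation over $\Omega_i$ Poincar\'e iterations requires a Gronwall-type $C^1$ estimate on the composed map of the kind the paper proves in Lemma \ref{cylinderlem2}; derivative amplification across $\Omega_i$ iterations is not automatic, and the paper avoids the issue precisely by averaging the action functional rather than the vector field.
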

\begin{proof}
For large $\Omega_i$, the energy of the Hamiltonian $G_{i}$ ranges over from almost zero to order $O(\Omega_i)$ if $\|y\|\le C$, where $C=O(1)$ is independent of $\Omega_i$. Under the symplectic coordinate transformation
\begin{equation}\label{coordinatetransformation-linear}
(x_1,x_2,y_1,y_2)\to\Big(\frac{x_1}{\Omega_i},x_2,\Omega_i y_1, y_2\Big),
\end{equation}
the Hamiltonian $G_{i}$ turns out to be
\begin{equation}\label{barG}
G'_{i}=y_1+\frac 1{2\Omega^2}A_{11}y_1^2+\frac{A_{12}}{\Omega}y_1y_2+\frac 12A_{22}y_2^2-V\Big(\Omega_i x_1,x_2\Big).
\end{equation}
The equation $G'_{i}(x_1,y_1(x_1,x_2,y_2),y_2)=E\Omega_i$ is solved by the function
\begin{equation}\label{Hamoned}
\begin{aligned}
y_1=&\frac{\Omega_i^2}{A_{11}}\left[-\Big(1+\frac{A_{12}}{\Omega_i}y_2\Big)+\sqrt{\Big(1+\frac{A_{12}} {\Omega_i}y_2\Big)^2-\frac{A_{11}}{\Omega_i^2}(A_{22}y_2^2-2V-2E\Omega_i)}\right] \\
=&E\Omega_i-\frac {1}{2}A_{22}y_2^2-A_{12}Ey_2+V+\Omega_i^{-1}R_H,
\end{aligned}
\end{equation}
where $E$ ranges over an interval $[0,\bar K]$ where $\bar K$ is independent of $\Omega_i$, the remainder $\Omega_i^{-1}R_H$ is of order $O(\Omega_i^{-1})$.

Let $\tau=-x_1$ as the new ``time". The Hamiltonian $y_1$ produces a Lagrangian up to an additive constant
$$
L_1=\frac{1}{2A_{22}}\Big(\frac{dx_2}{d\tau}\Big)^2-\frac{A_{12}E}{A_{22}} \frac{dx_2}{d\tau}+V+\frac 1\Omega_i R_L,
$$
where $R_L$ is $C^{r}$-bounded in $(y_2,\tau,x_2)$ for any large $\Omega_i$. The minimal periodic orbit of type-$(\nu,0)$ for $\Phi^t_{G_{i}}$ is converted to be minimal periodic orbit of $\phi^{\tau}_{L_1}$.
As it was shown in \cite{CZ1}, the hyperbolicity of such minimal periodic orbit is uniquely determined by the nondegeneracy of the minimal point of the following function
$$
F(x_2,\Omega_i,E)=\inf_{\gamma(0)=\gamma(2\pi)=x_2}\int_{0}^{2\pi} L_1\Big(\dot\gamma(\tau),\gamma(\tau),\Omega_i\tau,E\Big)d\tau.
$$
Because of the condition $\gamma(0)=\gamma(2\pi)=x_2$, the term $\frac{A_{12}E}{A_{22}}\dot x_2$ does not contribute to $F$, so we can drop it. Let $\gamma_{\Omega_i,E}(\tau,x_2)$ be the minimizer of $F(x_2,\Omega_i,E)$, along which the action is equal to $F(x_2,\Omega_i,E)$. Then, $|\dot\gamma_{\Omega_i,E}(\tau,x_2)|$ is uniformly bounded for any large $\Omega_i$. As the system has one degree of freedom, $\frac{2\pi}{\Omega_i}$-periodical in $\tau$, the minimum of $F$ determines an $\frac{2\pi}{\Omega_i}$-periodic curve $\gamma^*_{\Omega_i,E}$. We shall see later that $|\dot\gamma^*_{\Omega_i,E}(\tau)|\to 0$ as $\Omega_i\to\infty$.

Although the Lagrangian $L_1$ depends on $\Omega_i$ in a singular way when $\Omega_i\to\infty$, the function $F$ appears regular in $\Omega_i^{-1}$ as $\Omega_i\to\infty$. To see it we decompose the action
$$
F(x_2,\Omega_i,E)=F_0(x_2,\Omega_i,E)+\frac 1{\Omega_i}F_R(x_2,\Omega_i,E)
$$
where
\begin{align*}
F_0=&\int_{0}^{2\pi}\Big(\frac {1}{2A_{22}}(\dot\gamma_{\Omega_i,E}(\tau,x_2))^2 +[V](\gamma_{\Omega_i,E}(\tau,x_2))\Big)d\tau,\\
F_R=&\int_0^{2\pi}\Omega_i(V-[V])(-\Omega_i\tau,\gamma_{\Omega_i,E}(\tau,x_2))d\tau\\ &+\int_0^{2\pi}R_L(\gamma_{\Omega_i,E}(\tau,x_2),\dot\gamma_{\Omega_i,E}(\tau,x_2),\Omega_i\tau)d\tau.
\end{align*}
\begin{lem}\label{flatlemma}
Assume the potential $V\in C^4(\mathbb{T}^2,\mathbb{R})$. Then, $F_R$ is uniformly bounded in $C^{2}$-topology as $\Omega_i\to\infty$ when $x_2$ is restricted in a small neighborhood $F^{-1}(\min F)$.
\end{lem}
\begin{proof}
As the first step, we show that $F_R$ is uniformly bounded in $C^{0}$-topology. Indeed, the first integral of $F_R$ is bounded in $C^0$-topology. We expand $V$ into a Fourier series
$$
V(-\Omega_i\tau,x_2)=[V](x_2)+\sum_{k\ne 0}V_k(x_2)e^{ik\Omega_i\tau}.
$$
With the periodic boundary condition $\gamma_{\Omega_i,E}(0,x_2)= \gamma_{\Omega_i,E}(2\pi,x_2)$, the condition that $V\in C^r$ $(r\ge 4)$ and doing integration by parts we obtain,
\begin{align*}
|\textrm{The first integrale of}\ F_R|=&\Big|\Omega_i\sum_{k\ne 0}\int_{0}^{2\pi}V_k(\gamma_{\Omega_i,E}(\tau,x_2))e^{i k\Omega_i\tau}d\tau\Big|\\
=&\Big|\sum_{i\ne 0}\frac{1}{ik}\int_{0}^{2\pi}\dot V_k(\gamma_{\Omega_i,E}(\tau,x_2)) \dot\gamma_{\Omega_i,E}(\tau,x_2) e^{ik\Omega_i\tau}d\tau\Big|\\
\le&\sum_{i\ne 0}\frac{1}{|k|}\int_{0}^{2\pi}|\dot V_k||\dot\gamma_{\Omega_i,E}|d\tau
\le B\sum_{i\ne 0}\frac{1}{|k|^r}
\end{align*}
where $B=\|V\|_{C^r}\max_{\tau}|\dot\gamma_{\Omega_i,E}(\tau,x_2)|$. As the curve $\gamma_{\Omega_i,E}$ is a minimizer, $|\dot\gamma_{\Omega_i,E}(\tau,x_2)|$ keeps uniformly bounded as $\Omega_i\to\infty$. The second integral of $F_R$ is obviously bounded in $C^0$-topology. It completes the proof of the first step.
\end{proof}
\begin{pro}\label{progamma}
Each minimizer of $F(\cdot,\Omega_i,E)$, $\gamma^*_{\Omega_i,E}(\cdot,x_2)$ approaches the constant solution in $C^1$-topology: as $\Omega_i\to\infty$ we have
$$
|\gamma^*_{\Omega_i,E}(\tau,x_2)-x_2|\to 0, \qquad |\dot\gamma^*_{\Omega_i,E}(\tau,x_2)|\to 0.
$$
\end{pro}
\begin{proof}
Without of losing generality, we assume $\min[V]=0$. If $\exists$ $d>0$ and certain $\tau_{\Omega_i}$ such that $|\dot\gamma^*_{\Omega_i,E}(\tau_{\Omega_i},x_2)|\ge d>0$ holds for any large $\Omega_i$. In this case, the action of $F_0$ along the curve $\gamma^*_{\Omega_i,E}(\tau,x_2)$ does not approaches zero as $\Omega_i\to\infty$. It is guaranteed by the property that $|\frac{d^2x_2}{d\tau^2}|$ is uniformly bounded for large $\Omega_i$. To verify it, we do direct calculation. From the Hamiltonian equation produced by (\ref{barG})
\begin{equation}\label{2dequation}
\begin{cases}
\dot x_1=1+\frac{A'_{11}}{\Omega_i^2}y_1+\frac{A'_{12}}{\Omega_i}y_2,\hskip 0.5 true cm &\dot y_1=\Omega_i\frac{\partial V}{\partial x_1},\\
\dot x_2=\frac{A'_{12}}{\Omega_i}y_1+A'_{22}y_2,  &\dot y_2=\frac{\partial V}{\partial x_2},
\end{cases}
\end{equation}
we obtain that
\begin{align*}
\frac{d^2x_2}{dx_1^2}=&\frac{d}{dt}\Big(\frac{\dot x_2}{\dot x_1}\Big)\dot x_1^{-1}
=\frac{d}{dt}\Big(\frac{\frac{A_{12}}{\Omega_i}y_1+A'_{22}y_2}{1+\frac{A'_{11}}{\Omega_i^2}y_1+\frac{A'_{12}} {\Omega_i}y_2} \Big)\dot x_1^{-1}\\
=&\frac{(A'_{12}\frac{\partial V}{\partial x_1}+A'_{22}\frac{\partial V}{\partial x_2}) -(\frac{A'_{11}}{\Omega_i}\frac{\partial V}{\partial x_1}+\frac{A'_{12}}{\Omega_i}\frac{\partial V}{\partial x_2})\frac{dx_2}{dx_1}} {(1+\frac{A'_{11}}{\Omega_i^2}y_1+\frac{A'_{12}}{\Omega_i}y_2)^2}.
\end{align*}
It shows that $|\frac{d^2x_2}{dx_1^2}|$ is bounded uniformly for any large $\Omega_i$. On the other hand, the action of $L_1$ along $x_2=x_2^*$ with $x_2^*\in[V]^{-1}(0)$ is bounded by the quantity $O(\frac 1{\Omega_i})$. It implies that $\gamma^*_{\Omega_i,E}$ is not a minimizer, the contradiction leads to the proof.
\end{proof}
\begin{proof}[Continued proof of Lemma \ref{flatlemma}]
To show the boundedness of $\partial_{x_2}^{\ell}F_R$ for $\ell=1,2$, let us study the dependence of $\gamma_{\Omega_i,E}(\tau,x_2)$ and $\dot\gamma_{\Omega_i,E}(\tau,x_2)$ on $x_2$. The Hamiltonian equation generated by (\ref{Hamoned}) is the following:
\begin{equation}\label{HamLag}
\begin{aligned}
\frac{dx_2}{d\tau}=&\Omega_i\frac{A'_{12}}{A'_{22}}\Big(1-\frac 1{\sqrt{\Delta}}\Big) +\frac{(A'^2_{12}-A'_{11}A'_{22})y_2} {A'_{22}\sqrt{\Delta}},\\
\frac{dy_2}{d\tau}=&-\frac{A'_{11}}{A'_{22}\sqrt{\Delta}}\frac{\partial V}{\partial x_2},
\end{aligned}
\end{equation}
where $\Delta=(1+\frac{A'_{12}} {\Omega_i}y_2)^2-\frac{A'_{11}}{\Omega_i^2}(A'_{22}y_2^2-2V-2E\Omega_i)$. Treating the term $\Omega_i(1-\sqrt{\Delta}^{-1})$ as a function $y_2$ and $V$, we see that it remains bounded in $C^2$-topology as $\Omega_i\to\infty$. Therefore, the right hand side of Equation (\ref{HamLag}) is smooth and bounded in $C^2$-topology for any large $\Omega_i$ and bounded $y_2$.

It has been proved in \cite{CZ1} that there is a small neighborhood of the minimal point of $F(\cdot,\Omega_i,E)$ such that the minimal curve $\gamma_{\Omega_i,E}(\cdot,x_2)$ is uniquely determined by $x_2$ if $x_2$ is in the neighborhood. It implies that boundary value problem $\{x_2(0)=x_2(2\pi)=x'_2\}$ of the equation (\ref{HamLag}) is well defined provided $x'_2$ is in the neighborhood, since Eq. \ref{HamLag} is equivalent to the Lagrange equation determined by $L_1$. Therefore, there is a smooth dependence of $y_2=y_2(x'_2)$ such that the solution of the initial value problem $\{x_2(0)=x'_2,y_2(0)=y_2(x'_2)\}$ is the same as the boundary value problem. Applying the theorem of the smooth dependence of solution of ODE on its initial value, we find the first and the second derivatives of $(\gamma_{\Omega_i,E}(\tau,x_2),\partial_{\tau x_2}L_1(\gamma_{\Omega_i,E}(\tau,x_2), \dot\gamma_{\Omega_i,E}(\tau,x_2), \tau)$ with respect to $x_2$ is smooth and bounded for any large $\Omega_i$. As $L_1$ is positive definite in $\dot x_2$, the first and the second derivatives of  $\dot\gamma_{\Omega_i,E}(\tau,x_2)$ is also bounded.

By direct calculations (doing integration by parts) we find:
\begin{equation*}\label{1daoshu}
\begin{aligned}
\frac{\partial F_R}{\partial x_2}=&-\sum_{k\ne 0}\frac{1}{ik}\int_0^{2\pi}\Big(\frac{d^2V_k}{dx_2^2}\frac{\partial\gamma_{\Omega_i,E}}{\partial x_2}\dot\gamma_{\Omega_i,E} +\frac{dV_k}{dx_2}\frac{\partial\dot\gamma_{\Omega_i,E}}{\partial x_2}\Big) e^{ik\Omega_i\tau}d\tau\\
&+\int_0^{2\pi}\Big(\frac{\partial R_L}{\partial\dot x}\frac{\partial\dot\gamma_{\Omega_i,E}}{\partial x_2} +\frac{\partial R_L}{\partial x}\frac{\partial\gamma_{\Omega_i,E}}{\partial x_2}\Big)d\tau,\\
\frac{\partial^2 F_R}{\partial x_2^2}=&-\sum_{k\ne 0}\frac{1}{ik} \int_0^{2\pi}\Big(\frac{d^3V_k}{dx_2^3}\Big(\frac{\partial\gamma_{\Omega_i,E}}{\partial x_2}\Big)^2\dot\gamma_{\Omega_i,E} +\frac{dV_k}{dx_2}\frac{\partial^2\dot\gamma_{\Omega_i,E}}{\partial x_2^2} \Big) e^{ik\Omega_i\tau}d\tau\\
&-\sum_{k\ne 0}\frac{1}{ik}\int_0^{2\pi}\frac{d^2V_k}{dx_2^2}\Big(2\frac{\partial\gamma_{\Omega_i,E}}{\partial x_2} \frac{\partial\dot\gamma_{\Omega_i,E}}{\partial x_2}+ \frac{\partial^2\gamma_{\Omega_i,E}}{\partial x_2^2}\dot\gamma_{\Omega_i,E}\Big)e^{ik\Omega_i\tau}d\tau\\
&+\int_0^{2\pi}\Big(\frac{\partial^2 R_L}{\partial\dot x^2}\Big(\frac{\partial\dot\gamma_{\Omega_i,E}}{\partial x_2} \Big)^2 +\frac{\partial^2 R_L}{\partial x^2}\Big(\frac{\partial\gamma_{\Omega_i,E}}{\partial x_2}\Big)^2\Big)d\tau\\
&+\int_0^{2\pi}\Big(\frac{\partial R_L}{\partial\dot x}\frac{\partial^2\dot\gamma_{\Omega_i,E}}{\partial x_2^2} +\frac{\partial R_L}{\partial x}\frac{\partial^2\gamma_{\Omega_i,E}}{\partial x_2^2} +2\frac{\partial^2 R_L}{\partial x\partial\dot x}\frac{\partial\dot\gamma_{\Omega_i,E}}{\partial x_2} \frac{\partial\gamma_{\Omega_i,E}}{\partial x_2}\Big)d\tau.
\end{aligned}
\end{equation*}
It follows from these Formulae that $F_R$ is bounded in $C^2$-topology if $V\in C^4$.
\end{proof}

Let us calculate the second derivative of $F_0$ with respect to $x_2$:
\begin{align*}
\frac{\partial^2F_0}{\partial x_2^2}=&\int_{0}^{2\pi}\Big (\frac {1}{A'_{22}}\Big(\frac{\partial\dot\gamma_{\Omega_i,E}}{\partial x_2}\Big)^2 +\frac{d^2}{dx_2^2}[V](\gamma^*_{\Omega_i,E}) \Big(\frac{\partial \gamma_{\Omega_i,E}}{\partial x_2}\Big)^2\Big)d\tau\\
+&\int_{0}^{2\pi}\Big (\frac {1}{A'_{22}}\dot\gamma_{\Omega_i,E}\frac{\partial^2\dot\gamma_{\Omega_i,E}}{\partial x_2^2} +\frac{d}{dx_2} [V](\gamma^*_{\Omega_i,E})\frac{\partial^2\gamma_{\Omega_i,E}}{\partial^2 x_2}\Big)d\tau.
\end{align*}
The second integral approaches zero as $\Omega_i\to\infty$ if $\gamma_{\Omega_i,E}=\gamma^*_{\Omega_i,E}$. Indeed, it follows from Proposition \ref{progamma} that $|\dot\gamma^*_{\Omega_i,E}(\tau)|\to 0$ and $|\gamma^*_{\Omega_i,E}(\tau)-x_2^*|\to 0$ as $\Omega_i\to\infty$, where $x^*_2$ is a minimal point of $[V]$. Therefore, $\frac{d[V]}{dx_2}(\gamma^*_{\Omega_i,E}(\tau))\to 0$ as $\Omega_i\to\infty$.  To estimate the first integral, we note that the minimizer $\gamma^*_{\Omega_i,E}(\tau)$ stays in a small neighborhood of the minimal point of $[V]$ provided $\Omega_i$ is sufficiently large. As $V\in\mathfrak{V}_{\infty}$, certain $d>0$ exists such that $\frac{d^2}{dx_2^2}[V](\gamma^*_{\Omega_i,E})\ge d$ holds for all $\tau\in[0,2\pi]$. The linearized variational equation of (\ref{HamLag}) with the the boundary condition $\frac{\partial \gamma_{\Omega_i,E}}{\partial x_2}(0)=\frac{\partial \gamma_{\Omega_i,E}}{\partial x_2}(2\pi)=1$ admits a unique solution
$$
\Big(\frac{\partial}{\partial x_2}\gamma_{\Omega_i,E}(\tau,x_2),\frac{\partial}{\partial x_2}\frac{\partial L_1}{\partial \dot x_2}\Big(\gamma_{\Omega_i,E}(\tau,x_2),\dot\gamma_{\Omega_i,E}(\tau,x_2),\tau\Big)\Big),
$$
and the right hand side of (\ref{HamLag}) is $C^2$-smooth and uniformly bounded for any large $\Omega_i$. Therefore, certain $T>0$ exists, uniformly lower bounded for any large $\Omega_i$, such that $\frac{\partial\gamma_{\Omega_i,E}}{\partial x_2}(\tau)>\frac 12$ for all $\tau\in[0,T]\cup[2\pi-T,2\pi]$. As the minimizer is $\frac{2\pi}{\Omega_i}$-periodic, we find $\frac{\partial\gamma_{\Omega_i,E}}{\partial x_2}(\tau)>\frac 12$ for all $\tau\in[0,2\pi]$ for large $\Omega_i$. These arguments lead to the conclusion that certain $\mu>0$ and suitably large $\Omega^*>0$ exist such that $\partial^2_{x_2}F_0(\gamma^*_{\Omega_i,E}(0),\Omega_i,E)\ge 2\mu$ if $\Omega_i\ge\Omega^*$. As the function of action $F$ is a $O(\frac 1{\Omega_i})$-perturbation of $F_0$ we have
$$
\frac{\partial^2}{\partial x_2^2}F(\gamma^*_{\Omega_i,E}(0),\Omega_i,E)\ge\mu, \qquad \Omega_i\ge\Omega^*.
$$
Let $x_2^*$ be the minimal point of $F(\cdot,\Omega_i,E)$. In this case we have
$$
F(x_2,\Omega_i,E)-F(x_2^*,\Omega_i,E)\ge\mu(x_2-x_2^*)^2,
$$
if $|x_2-x_2^*|$ is suitably small,  Let $B_E:=u^--u^+$ denote the barrier function where $u^\pm$ are the backward and forward weak KAM solutions, as it was shown in \cite{CZ1}, one has
$$
B_E(x_2)-B_E(x_2^*)\ge F(x_2,\Omega_i,E)-F(x_2^*,\Omega_i,E).
$$
As barrier function is semi-concave, there exists a number $C_L> 2\mu$ such that
$$
B_E(x_2)-B_E(x_2^*)\le C_L(x_2-x_2^*)^2.
$$
It follows that the hyperbolicity of the minimizer is not weaker than $\frac 1{\Lambda_1}= \sqrt{1-\frac{2\mu}{C_L}}$.
Let us assume the contrary, denote by $(\gamma^*_E(\tau),\dot\gamma^*_E(\tau))$ the minimal periodic orbit and denote by $(\gamma^{\pm}(\tau),\dot\gamma^{\pm}(\tau))$ the orbit such that $\gamma^-(0)=\gamma^+(0)$ and they asymptotically approaches to the orbit $(\gamma^*(\tau),\dot\gamma^*(\tau))$ as $\tau\to\pm\infty$, we then have
$$
|\gamma^*_E(\pm j)-\gamma^{\pm}(\pm j)|>\frac 1{\Lambda_1}|\gamma^*_E(\pm(j-1))-\gamma^{\pm}(\pm(j-1))|
$$
if $|\gamma^*_E(0)-\gamma^{\pm}(0)|$ is suitably small. The following computation leads to a contradiction:
\begin{align*}
C_L(\gamma^{\pm}(0)-\gamma^*_E(0))^2&\ge B_E(\gamma^{\pm}(0))-B_E(\gamma^*_E(0))\\
&\ge \sum_{j=1}^{\infty}\Big(F(\gamma^-(-j)-F(\gamma_E^*(0))\Big)+\Big(F(\gamma^+(j)-F(\gamma_E^*(0))\Big)\\
&> 2\mu\frac{(\gamma^{\pm}(0)-\gamma^*_E(0))^2}{1-\Lambda^2}=C_L(\gamma^{\pm}(0)-\gamma^*_E(0))^2.
\end{align*}
We observe a fact that such hyperbolicity holds for all $E\in[1,\bar K]$. Therefore, these minimal $(E,g)$-minimal periodic orbits make up a cylinder $\Pi_{\Omega_i, \Omega_i\bar K,g}$.

Let us go back to the coordinates before the transformation (\ref{coordinatetransformation-linear}). That the new coordinate $x_1$ goes around the circle $\mathbb{T}$ once amounts to that the old coordinate $x_1$ sweeps out an angle of $\Omega$. In the original coordinate system, we have $\frac{dx_1}{d\theta}=\Omega+O(1)$. Therefore, the hyperbolicity we obtain for $\tau=2\pi$-map is almost the same as the time $\theta=2\pi$-map determined by the Hamiltonian flow $\Phi_{G_i}^{\theta}$.

To investigate whether the tangent space of $\mathbb{T}^2$ over the cylinder admits an invariant splitting, we consider the tangent bundle of $\Pi_{\Omega_i, \Omega_i\bar K,g}^i$. The tangent space at a point $z\in\Pi_{\Omega_i,\Omega_i\bar K,g}$ is two dimensional, spanned by the a vector $v'_z$ tangent to the minimal orbit passing through this point and an orthogonal vector, denoted by $v''_z$. Because the map $\Phi_{G_{\Omega_i}}^{2\pi}$ preserves the symplectic structure, the cylinder $\Pi_{\Omega_i, \Omega_i\bar K,g}^i$ is an invariant symplectic sub-manifold made up by periodic orbits, there exists a number $\lambda\ge 1$ such that
$$
\lambda^{-1}\|v_z\|\le\|D\Phi_{G_{\Omega_i}}^{2k\pi}(z)v_z\|\le\lambda\|v_z\|
$$
holds for any $v_z\in\mathrm{Span}(v'_z,v''_z)$ and for any $k\in\mathbb{Z}$. Let $k=[\frac{\lambda}{\Lambda_1}]+1$, $\Lambda=\Lambda_1^k$, then the formula (\ref{wuqiongyuan}) holds. From Equation \ref{2dequation} we see that the number $\lambda$ is uniformly bounded for any large $\Omega_i$. This completes the proof of Theorem \ref{LmHighEnergy}.
\end{proof}

Applying the theorem of normally hyperbolic invariant manifold, we find that there exists $\epsilon_{i_0}>0$ such that for $\epsilon\le\epsilon_{i_0}$ the time-$2\pi$-map $\Phi_{G_{\epsilon,i}}^{2\pi}$ also admits a NHIC $\Pi_{0,\bar K\Omega_i,g}^{i,\epsilon}$ which is a small perturbation of $\Pi_{0,\bar K\Omega_i,g}^i$.

\section{Uniform hyperbolicity}
\setcounter{equation}{0}
Recall that along the resonant path $\Gamma^*\cap\{|p-p''|\le\epsilon^{\sigma}\}$ we choose points $\{p'_i\}$ so that $p'_0=p''$, $\partial_1 h(p'_i)=Ki\sqrt{\epsilon}$, where $K\in\mathbb{Z}^+$. Around the $K_1\sqrt{\epsilon}$-neighborhood of $p'_i$ ($K_1$ is independent of $\epsilon$), the Hamiltonian is resaled to the form of (\ref{2dHmil}):
$$
G_{\epsilon,i}(x,y,\theta)=\Omega_iy_1+\frac 12\langle Ay,y\rangle -V(x)+ \epsilon^{\sigma}R_{\epsilon,i}(x,y,\theta).
$$
Let us fix a potential $V\in\mathfrak{V}_{\infty}$. By the study in the last section, there is $i_0$ independent of $\epsilon$ such that, for each $i\ge i_0$, $\Phi_{G_{i}}^{\theta}$ admits a unique invariant cylinder $\Pi_{\Omega_i,\bar K\Omega_i,g}$.

We claim that for all $i\ge i_0$, each of these cylinders is just a part of large cylinder. To verify it, we consider two adjacent subscripts $i,i+1$, and denote by $(x_j,y_j,I_j,\theta_j)$ the coordinates used for $\tilde G_{\epsilon,j}$ with $j=i,i+1$. Because of the translation (\ref{energylevel}), we find
$$
\Big(y_i-y_{i+1},\frac{\sqrt{\epsilon}}{\omega_{3,i}}I_i-\frac{\sqrt{\epsilon}}{\omega_{3,i+1}}I_{i+1}\Big)=\frac 1{\sqrt{\epsilon}}(p'_{i}-p'_{i+1}).
$$
As $I_j=-G_{\epsilon,j}$ solves Equation \ref{reduc}, the energy levels of $G_{\epsilon,i}$ and $G_{\epsilon,i+1}$ match in the following way:
\begin{equation}\label{match}
G_{\epsilon,i}^{-1}\Big(\frac{\omega_{3,i}}{\omega_{3,i+1}}E+\frac {\omega_{3,i}}{\epsilon}(p'_{3,i+1}-p'_{3,i})\Big)=G_{\epsilon,i+1}^{-1}(E),
\end{equation}
where we use the notation $p'_i=(p'_{1,i},p'_{2,i},p'_{3,i})$ and so on.

By the choice of $p'_i$ we have following identities
$$
h(p'_i)=h(p'_{i+1}), \qquad \partial_2 h(p'_i)=\partial_2h(p'_{i+1})=0, \qquad \partial_1h(p'_i)=Ki\sqrt{\epsilon}.
$$
From these identities we obtain
\begin{equation}\label{estimate}
\begin{aligned}
\Big\langle\frac{\partial^2h}{\partial p^2}(p'_i),p'_{i+1}-p'_i\Big\rangle+O(\|p'_{i+1}-p'_i\|^2) =&(K\sqrt{\epsilon}, 0,\omega_{3,i+1}-\omega_{3,i})^t,\\
Ki\sqrt{\epsilon}(p'_{1,i+1}-p'_{1,i})+\omega_{3,i}(p'_{3,i+1}-p'_{3,i})=&O(\|p'_{i+1}-p'_i\|^2).
\end{aligned}
\end{equation}
It follows from the conditions $Ki\sqrt{\epsilon}\le O(\epsilon^{\sigma})$, $\|p'_{i+1}-p'_i\|\ll 1$ and the second formula of (\ref{estimate}) that $|p'_{3,i+1}-p'_{3,i}|\ll \|p'_{i+1}-p'_i\|$. It is also impossible that $|p'_{2,i+1}-p'_{2,i}|\gg\max\{|p'_{1,i+1}-p'_{1,i}|, |p'_{3,i+1}-p'_{3,i}|\}$ since the matrix $\frac{\partial^2h}{\partial p^2}(p'_i)$ is positive definite, it can not map a vector to someone almost orthogonal to itself, as shown in the first formula of (\ref{estimate}). Therefore, there exist positive constants $K_2, K_3>0$ independent of $\epsilon$ such that
$$
\|p'_{i+1}-p'_i\|\le K_2\sqrt{\epsilon}
$$
and
$$
|p'_{3,i+1}-p'_{3,i}|\le \frac 1{\omega_{3,i}}\Big|Ki\sqrt{\epsilon}(p'_{1,i+1}-p'_{1,i})+O(\|p'_{i+1}-p'_i\|^2)\Big|\le \frac{K_3}{\omega_{3,i}}i\epsilon.
$$
Recall the coordinate rescaling of (\ref{energylevel}). Since $\frac{\omega_{3,i}}{\omega_{3,i+1}}$ is close to 1, for $\bar K>3\frac{K_2}{\omega_{3,i}K}$ the energy level $G_{\epsilon,i}^{-1}(\bar K\Omega_i)$ is contained in the set where $G_{\epsilon,i+1}>\Omega_i$. This $\bar K$ is available if we choose suitable $K_1>0$.

Obviously, in the region where both $G_{\epsilon,i}$ and $G_{\epsilon,i+1}$ remains valid, there is a cylinder $\Pi_{0,\bar K\Omega_i,g}^{i,\epsilon}\cap\Pi_{0,\bar K\Omega_{i+1},g}^{i+1,\epsilon}$ as well.
It follows from the uniqueness of such invariant cylinder. Therefore, we get an NHIC which extends from the energy level $G_{\epsilon}^{-1}(E_{i_0}=\omega_3p'_{3,{i_0}}\epsilon^{-1})$ to the energy level $G_{\epsilon}^{-1}(E_{i_1}=\omega_3p'_{3,{i_1}}\epsilon^{-1})$ such that $|p''-p'_{i_1}|=D\epsilon^{\sigma-\frac 12}$. The tangent space of $\mathbb{T}^2$ over the whole $\Pi_{E_{i_0},E_{i_1},g}$ admits normally hyperbolic invariant splitting of (\ref{wuqiongyuan}). For $i<0$, the situation is the same, instead of considering the class $g$, we consider the class $-g$.

Back to the original coordinates, for the class $g$ as well as for $-g$, there is a NHIC which extends from $\Omega_{i_0}\sqrt{\epsilon}$-neighborhood of the double resonant point $p''$ to the border of the disk $\{|p-p''|\le D\epsilon^{\sigma}\}$.

Since $\|p'_{i+1}-p'_i\|\le K_2\sqrt{\epsilon}$, there are as many as $O([\epsilon^{\sigma-\frac 12}])$ points $\{p'_i\}$ along the resonant path $\Gamma'$. Around at most $2i_0+1$ points (the number is independent of $\epsilon$), the situation need to be handled in the way treated in \cite{CZ1}. For each of them, there is an open-dense set $\mathfrak{V}_i\subset C^r(\mathbb{T}^2,\mathbb{R})$. For each $V\in\mathfrak{V}_i$, the Hamiltonian flow $\Phi_{G_i}^{\theta}$ admits NHICs in the domain with certain normal hyperbolicity independent of $\epsilon$. Therefore, certain $\epsilon_i>0$ exists such that for each $\epsilon\le\epsilon_i$, the cylinders survive the time-periodic perturbation $\Phi_{G_i}^{\theta}\to\Phi_{G_ {\epsilon,i}}^{\theta}$. Note the Hamiltonian $G_{\epsilon,i}$ is a local expression of $G_{\epsilon}$.

Now the situation becomes clear. One cylinder extends from $G_{\epsilon}^{-1}(\epsilon^d)$ to $G_{\epsilon}^{-1}(E_0)$, another cylinder extends from $G_{\epsilon}^{-1}(E_{i_0})$ to $G_{\epsilon}^{-1}(D\epsilon^{\sigma-\frac 12})$. Between the energy level $G_{\epsilon}^{-1}(E_0)$ and $G_{\epsilon}^{-1}(E_{i_0})$ there are finitely many pieces of NHICs. Each energy level intersects these NHIC's along one or two circles. Let
$$
\mathfrak{V}=\Big(\bigcap_{|j|<i_0}\mathfrak{V}_j\Big)\cap\mathfrak{V}_{\infty},
$$
for each $V\in\mathfrak{V}$ we choose $\epsilon_V=\min\{\epsilon_0,\cdots,\epsilon_{\pm i_0},\}$. The first part of Theorem \ref{mainresult} is proved for $V\in\mathfrak{V}$ and $\epsilon\le\epsilon_V$.

\section{Aubry sets along resonant path: near double resonance}
\setcounter{equation}{0}
Since the NHICs obtained are overflowing, we need to identify whether the Aubry sets along resonant path remain in the cylinder.

An irreducible class $g\in H_1(\mathbb{T}^2,\mathbb{R})$ determines a channel of first cohomology classes
$$
\mathbb{C}_{g,G}=\bigcup_{\nu\in\mathbb{R}^+}\mathscr{L}_{\beta_{G}}(\nu g),\qquad \mathbb{C}_{E',E'_1,g,G}=\Big\{c\in\mathbb{C}_{g,G}: \alpha_{G}(c)\in[E',E'_1]\Big\}.
$$
\begin{theo}\label{AubrySet}
For the Hamiltonian $G_{\epsilon,0}$ of $($\ref{Hamilton2}$)$, a class $g\in H_1(\mathbb{T}^2,\mathbb{Z})$ and a large positive number $E'_1>0$, there exists a residual set $\mathfrak{V}\subset C^{r}(\mathbb{T}^2,\mathbb{R})$ $(r\ge 5)$. For each $V\in\mathfrak{V}$ there are numbers $\epsilon_0>0$ and $d>0$ such that for $\epsilon\le\epsilon_0$ it holds for each $c\in \mathbb{C}_{2\epsilon^d,E'_1,g}$ that the Aubry set $\tilde{\mathcal{A}}(c)$ of $G_{\epsilon,0}$ lies on invariant cylinder.
\end{theo}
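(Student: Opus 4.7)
The plan is to reduce Theorem \ref{AubrySet} to a combination of three ingredients that are already in place: the description of $(E,g)$-minimal periodic orbits for the truncated Hamiltonian $G_0$, the persistence of the cylinder established in Theorem \ref{cylinderforepsilon}, and the upper semi-continuity of the Aubry-set set-valued map under $C^2$-small perturbations of Tonelli Hamiltonians.

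First I would settle the unperturbed case. For the autonomous two-degree-of-freedom $G_0$ of (\ref{Hamilton2}), the channel $\mathbb{C}_{g,G_0}$ is naturally parametrised by the energy $E=\alpha_{G_0}(c)$; for $E\in[2\epsilon^d,E_1']$ Theorem \ref{mainth} together with Lemma \ref{cylinderlem1} and the hypothesis (\textbf{H2})--(\textbf{H3}) guarantee that, except at finitely many bifurcation values $E_i$, the Aubry set $\tilde{\mathcal{A}}_{G_0}(c)$ is a single hyperbolic $(E,g)$-minimal periodic orbit, while at $E=E_i$ it is a disjoint union of two such orbits. In every case $\tilde{\mathcal{A}}_{G_0}(c)$ is contained in the union of cylinders $\Pi_{\epsilon^d,E_0+\delta,g}\cup\Pi_{E_0-\delta,E_1+\delta,g}\cup\cdots$ produced in Section 2. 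The genericity set $\mathfrak{V}$ of Theorem \ref{AubrySet} is then the intersection of $\mathfrak{V}_\infty$ with the open-dense sets of Theorem \ref{mainth} and with the ones produced by (\textbf{H1})--(\textbf{H3}).

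Next I would transfer this description to $G_{\epsilon,0}$. Because $G_{\epsilon,0}=G_0+\epsilon^\sigma R_{\epsilon,0}$ with $R_{\epsilon,0}$ uniformly $C^{r-2}$-bounded, the associated Tonelli Lagrangians are $C^2$-close, uniformly on compact sets in the range of interest. The standard upper semi-continuity of the Aubry-set map (e.g. in the Bernard--Contreras formulation) then gives: for every $\eta>0$ there exists $\epsilon_0>0$ such that whenever $\epsilon\le\epsilon_0$ and $c\in\mathbb{C}_{2\epsilon^d,E_1',g,G_{\epsilon,0}}$,
\[
\mathrm{dist}_H\bigl(\tilde{\mathcal{A}}_{G_{\epsilon,0}}(c),\tilde{\mathcal{A}}_{G_0}(c_0)\bigr)<\eta,
\]
where $c_0$ is the nearby cohomology class in $\mathbb{C}_{g,G_0}$ sharing the same rotation vector (the duality $\mathscr{L}_{\beta_{G_{\epsilon,0}}}\to\mathscr{L}_{\beta_{G_0}}$ is itself $O(\epsilon^\sigma)$-close). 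Combining this estimate with Theorem \ref{cylinderforepsilon}, whose NHIC $\tilde\Pi^\epsilon$ is $\epsilon^\sigma$-close to the unperturbed cylinder, the orbit of every point in $\tilde{\mathcal{A}}_{G_{\epsilon,0}}(c)$ stays for all time within an $\eta$-tube around $\tilde\Pi^\epsilon$. Invoking the local product structure $W^+\oplus\tilde\Pi^\epsilon\oplus W^-$ from (\ref{cylindereq11}), any such bi-infinite orbit must in fact lie on $\tilde\Pi^\epsilon$ itself, which is the conclusion sought.

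The main obstacle is making the upper semi-continuity quantitative \emph{uniformly} as $\alpha(c)\downarrow 2\epsilon^d$, because near the separatrices the periodic orbits become long and the hyperbolicity constants of Lemma \ref{cylinderlem1} degenerate like $E^{-\lambda_2/\lambda_1}$. The role of the threshold $2\epsilon^d$ is precisely to absorb this degeneration: the restriction (\ref{d>0}) on $d$ forces $\bigl(E_d^{-\lambda_2/\lambda_1}\bigr)\cdot\epsilon^\sigma\to 0$, which is exactly what is needed to keep the perturbation small against the local hyperbolic scale, and to rule out new minimizing orbits (homoclinic ``shortcuts'' to the fixed point at $z=0$) which could a priori carry the Aubry set off the cylinder. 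Near the bifurcation energies $E_i$ an additional comparison of barrier functions, using (\textbf{H3}), is required to show that no third local minimizer can appear and that the two components of $\tilde{\mathcal{A}}_{G_0}(E_i)$ persist on the respective cylinders $\tilde\Pi^\epsilon_{E_{i-1}-\delta,E_i+\delta,g}$ and $\tilde\Pi^\epsilon_{E_i-\delta,E_{i+1}+\delta,g}$.
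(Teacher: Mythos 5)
Your high-level framework is the right one, and you correctly isolate the three ingredients (classification of $(E,g)$-minimal orbits for $G_0$, persistence of the NHIC from Theorem \ref{cylinderforepsilon}, a perturbation argument to pull the Aubry set of $G_{\epsilon,0}$ onto the cylinder). You also correctly identify where the difficulty lies: the regime $\alpha_{G_{\epsilon,0}}(c)$ near $2\epsilon^d$, where the hyperbolicity constants of Lemma \ref{cylinderlem1} degenerate like $E^{-\lambda_2/\lambda_1}$ and a soft argument cannot close. But you stop at naming the obstacle and do not actually overcome it, whereas this is where essentially all the work in the paper's proof goes.

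The paper does not use upper semi-continuity of the Aubry-set map to handle the low-energy end; that tool is invoked only near the bifurcation energies $E_i$, which are bounded away from zero so the hyperbolic structure is uniform. Near $2\epsilon^d$, the proof is quantitative and variational: Lemma \ref{lem2.4.} first establishes, via integration by parts against the rapidly oscillating factor $e^{ik\omega_3\theta/\sqrt\epsilon}$, that the energy of any orbit drifts by at most $K(S+1)\epsilon^\sigma$ over a time interval of length $S$; then Proposition \ref{pro1} derives a contradiction for any putative $c$-Aubry orbit that reaches $G_{\epsilon,0}^{-1}(\epsilon^d)$ by comparing its action against a nearby $(E,g)$-minimal periodic orbit of $G_0$ over one return time $S = O(|\ln\epsilon^d|)$. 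The contradiction hinges on the strict convexity of $\alpha_{G_0}$ along $\Gamma_g$ together with the logarithmic relation $|\omega(c)| \sim \lambda_1/((k_1+k_2)|\ln E|)$ from (\ref{cylindereq15}): these yield a lower bound of order $\epsilon^d/|\ln\epsilon^d|$ on the action deficit, against an upper bound of order $(S+1)\epsilon^{\sigma-\mu_6}$ coming from the Aubry property and the cylinder approximation (\ref{cylindereq9}); since $\sigma > d + \mu_6$, the two are incompatible. Your assertion that ``the restriction (\ref{d>0}) on $d$ forces $E_d^{-\lambda_2/\lambda_1}\cdot\epsilon^\sigma\to 0$, which is exactly what is needed'' conflates two different roles of $d$: (\ref{d>0}) controls cylinder persistence (so that the $C^1$ distance $\|\Phi^{s,s_0}_{G_0}-\Phi^{s,s_0}_{G_{\epsilon,0}}\|$ vanishes), but it does not by itself rule out ``shortcut'' minimizers that dip below $G_{\epsilon,0}^{-1}(\epsilon^d)$ and come back; ruling those out requires the averaging bound of Lemma \ref{lem2.4.} and the $\alpha$-function computation of Proposition \ref{pro1}, neither of which appears in your proposal. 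Without them, the claim that the Aubry set stays on the cylinder down to $\alpha_{G_{\epsilon,0}}(c) = 2\epsilon^d$ is unproven. A secondary point: your invocation of ``the local product structure $W^+\oplus\tilde\Pi^\epsilon\oplus W^-$ implies that a bi-infinite orbit confined to an $\eta$-tube lies on $\tilde\Pi^\epsilon$'' is valid only when $\eta$ is small compared to the hyperbolic scale; near $E = 2\epsilon^d$ that scale itself is degenerate, so this step also needs the quantitative estimate you never supply.
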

To prove this theorem we need some preliminary works.
\begin{lem}\label{lem2.4.}
For the Hamiltonian $G_{\epsilon,0}$ of $($\ref{Hamilton2}$)$, if an orbit $z(s)$ remains in a bounded region $\Omega\subset T^*M$ for $s\in[s_0,s_1]$, some constant $K>0$ exists, independent of $\epsilon$ such that the variation of energy along the orbit $z(s)$ is bounded by
$$
|G_{\epsilon,0}(z(s_1),s_1)-G_{\epsilon,0}(z(s_0),s_0)|\le K|s_1-s_0+1|\epsilon^{\sigma}.
$$
\end{lem}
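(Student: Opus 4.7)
The plan is to trade the $s$-variation of $G_{\epsilon,0}$ for a computation inside the ``upstairs'' Hamiltonian $\tilde G_\epsilon$ of~(\ref{Hamiltonian}), exploiting that the problematic fast coordinate $x_3$ appears only through $\sqrt{\epsilon}\tilde R_\epsilon$. The implicit relation~(\ref{reduc}) gives $G_{\epsilon,0}=-\omega_{3,0}y_3/\sqrt{\epsilon}$ on the zero-energy surface of $\tilde G_\epsilon$, while $\tilde G_\epsilon$ is $x_3$-independent outside the $\sqrt{\epsilon}\tilde R_\epsilon$-term, so the Hamiltonian equation for $y_3$ reads $\dot y_3=-\sqrt{\epsilon}\,\partial_{x_3}\tilde R_\epsilon$. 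Consequently
\[
G_{\epsilon,0}(z(s_1),s_1)-G_{\epsilon,0}(z(s_0),s_0)\;=\;\omega_{3,0}\int_{s_0}^{s_1}\partial_{x_3}\tilde R_\epsilon(z(s),s)\,ds
\]
modulo $(1+O(\sqrt{\epsilon}))$-corrections coming from the reparameterization between $s$-time and the $\tilde G_\epsilon$-time. The hypothesis $|\sqrt{\epsilon}\,\tilde R_\epsilon|_{C^{\ell-2}}=O(\epsilon^\sigma)$ only yields the pointwise estimate $|\partial_{x_3}\tilde R_\epsilon|=O(\epsilon^{\sigma-1/2})$, so a crude bound on the integrand falls short of the target $K|s_1-s_0+1|\epsilon^\sigma$ by a factor of $\sqrt{\epsilon}$.

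The missing $\sqrt{\epsilon}$ is recovered by integration by parts exploiting the \emph{fast} character of $x_3$: the Hamiltonian equation $\dot x_3=\partial_{y_3}\tilde G_\epsilon=\omega_{3,0}\epsilon^{-1/2}(1+O(\sqrt{\epsilon}))$, together with the chain rule $\tfrac{d}{ds}\tilde R_\epsilon=\partial_x\tilde R_\epsilon\cdot\dot x+\partial_y\tilde R_\epsilon\cdot\dot y+\partial_{x_3}\tilde R_\epsilon\cdot\dot x_3+\partial_{y_3}\tilde R_\epsilon\cdot\dot y_3$, gives after multiplying by $\sqrt{\epsilon}$ the key identity
\[
\omega_{3,0}\,\partial_{x_3}\tilde R_\epsilon\;=\;(1+O(\sqrt{\epsilon}))\Big[\tfrac{d}{ds}\bigl(\sqrt{\epsilon}\,\tilde R_\epsilon\bigr)-\sqrt{\epsilon}\bigl(\partial_x\tilde R_\epsilon\,\dot x+\partial_y\tilde R_\epsilon\,\dot y+\partial_{y_3}\tilde R_\epsilon\,\dot y_3\bigr)\Big].
\]
Crucially, each ``slow'' term on the right is of size $O(\epsilon^\sigma)$ on the bounded region $\Omega$: $\sqrt{\epsilon}\,\partial_\bullet\tilde R_\epsilon=\partial_\bullet(\sqrt{\epsilon}\,\tilde R_\epsilon)=O(\epsilon^\sigma)$ by hypothesis, while $\dot x,\dot y=O(1)$ and $\dot y_3=O(\epsilon^\sigma)$ on $\Omega$, the last by the first step.

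Integrating from $s_0$ to $s_1$, the total-derivative piece telescopes to a boundary contribution bounded by $2\max_\Omega|\sqrt{\epsilon}\,\tilde R_\epsilon|=O(\epsilon^\sigma)$ --- this is the source of the ``$+1$'' in the statement --- whereas the slow-term integral contributes $O(\epsilon^\sigma)|s_1-s_0|$. Adding these and absorbing the $(1+O(\sqrt{\epsilon}))$ prefactors yields the bound $|G_{\epsilon,0}(z(s_1),s_1)-G_{\epsilon,0}(z(s_0),s_0)|\le K|s_1-s_0+1|\epsilon^\sigma$ with $K$ depending only on $\omega_{3,0}$, $\Omega$, and the $C^2$-norms of the pieces in~(\ref{remainder}). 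The main obstacle is bookkeeping rather than conceptual: one must verify that the $O(\sqrt{\epsilon})$-corrections arising from the $s$-versus-$\tilde G_\epsilon$-time reparameterization and from the prefactor $(1+O(\sqrt{\epsilon}))$ are uniformly absorbable into $K\epsilon^\sigma$ on $\Omega$, and that the $x_3$-independent Taylor remainder $\tilde R_h$ (which contributes nothing to $\partial_{x_3}\tilde G_\epsilon$) plays no role in the estimate.
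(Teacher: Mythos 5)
Your approach is correct in substance but takes a genuinely different route from the paper's. The paper stays with the reduced system: along a $G_{\epsilon,0}$-orbit the energy drift is exactly $\tfrac{d}{d\theta}G_{\epsilon,0}=\partial_\theta G_{\epsilon,0}=\omega_3\epsilon^{\sigma-1/2}\partial_\tau R_{\epsilon,0}$ with $\tau=\omega_3\theta/\sqrt{\epsilon}$, and the missing $\sqrt{\epsilon}$ is recovered by Fourier-expanding $R_{\epsilon,0}$ in $\tau$ and integrating each mode by parts, with $C^3$-decay of the coefficients making the sum converge. You instead lift to $\tilde G_\epsilon$, identify $G_{\epsilon,0}=-\omega_{3,0}y_3/\sqrt{\epsilon}$ from the implicit reduction, and redistribute $\partial_{x_3}\tilde R_\epsilon$ through the chain rule using the fastness of $\dot x_3$ --- a ``physical-space'' version of the same integration by parts that avoids the Fourier machinery. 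The cost is exactly the bookkeeping you flag: downstairs the energy identity is exact in $\theta$-time, so there is no $s$-versus-$\theta$ reparameterization; whereas your prefactor $1/\dot x_3$ is $s$-dependent, so the total-derivative piece does not literally telescope --- one must integrate by parts once more against $a(s)=1/\dot x_3$ and verify $a'(s)=-\ddot x_3/\dot x_3^2=O(\epsilon^{\sigma+1/2})$, which works but has to be checked. Both routes ultimately need a $C^3$-type bound on the remainder and bounded $\dot z$ on $\Omega$, so neither is strictly more economical; the Fourier version is cleaner because the energy identity is exact from the start.
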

\begin{proof} Along an orbit $z(s)$ the variation of the energy is given by
\begin{equation}\label{derivativeoftime}
\frac d{d\theta}G_{\epsilon,0}(z(\theta),\theta)=\frac{\partial}{\partial\theta}G_{\epsilon,0}(z(\theta),\theta) =\omega_3\epsilon^{\sigma- \frac 12}\frac{\partial R_{\epsilon,0}}{\partial\tau}
\end{equation}
where $\tau=\omega_3\frac{\theta}{\sqrt{\epsilon}}$. Recall $R_{\epsilon,0}$ is regular and $2\pi$-periodic in $\tau$, see (\ref{reduction}), we expend $R$ into Fourier series
$$
\partial_{\theta}G_{\epsilon,0}(z,\theta)=\omega_3\epsilon^{\sigma-\frac 12}\sum_{k\neq 0}R_k(z)e^{ik\frac{\omega_3\theta}{\sqrt{\epsilon}}}.
$$
Integrating by parts, we have
\begin{equation}\label{Riemann-Lebesgue}
\begin{aligned}
\epsilon^{\sigma-\frac 12}\int_{s_0}^{s_1}R_k(z(\theta))e^{ik\frac{\omega_3\theta}{\sqrt{\epsilon}}}d\theta&= \frac{\epsilon^{\sigma}}{i\omega_3k}R_k(z(\theta))e^{ik\frac{\omega_3\theta}{\sqrt{\epsilon}}}\Big|_{s_0}^{s_1}\\
&-\frac{\epsilon^{\sigma}}{i\omega_3k}\int_{s_0}^{s_1}\langle\partial R_k,\dot z(\theta)\rangle e^{ik\frac{\omega_3\theta}{\sqrt{\epsilon}}}d\theta.
\end{aligned}
\end{equation}
As the perturbation term $R$ is $C^4$-smooth and $2\pi$-periodic in $\tau=\omega_3\frac{\theta}{\sqrt{\epsilon}}$, so we have
$$
|R_k|\le\frac {\|R\|_{C^3}}{2\pi|k|^3},\qquad |\partial R_k|\le \frac {\|\partial_zR_{\epsilon,0}\|_{C^3}}{2\pi|k|^3}.
$$
So, by setting
$$
K=\frac {1}{\omega_3\pi}\max_{z\in\Omega}\Big\{\|R_{\epsilon,0}\|_{C^3},\Big(|Ay|+\Big|\frac{\partial V}{\partial x}\Big|\Big)\|\partial_zR_{\epsilon,0}\|_{C^3}\Big\}\sum_{k\neq 0}\frac{1}{|k|^4}.
$$
which is independent of $\epsilon$, it follows from (\ref{derivativeoftime}) and (\ref{Riemann-Lebesgue}) that,
\begin{align*}
|G_{\epsilon,0}(z(s_1),s_1)-G_{\epsilon,0}(z(s_0),s_0)|\le&\Big|\sum_{k\neq 0}\frac{\epsilon^{\sigma}}{i\omega_3k}R_k(z(\theta))e^{ik\frac{\omega_3\theta}{\sqrt{\epsilon}}}\Big|_{s_0}^{s_1}\\
&- \sum_{k\neq 0}\frac{\epsilon^{\sigma}}{i\omega_3k}\int_{s_0}^{s_1}\langle\partial R_k,\dot z(\theta)\rangle e^{ik\frac{\omega_3\theta}{\sqrt{\epsilon}}}d\theta\Big|
\end{align*}
the right hand side is not bigger than $K(s_1-s_0+1)\epsilon^{\sigma}$.
\end{proof}
Since $\Pi_{E_i,E_{i+1},g}$ is a hyperbolic cylinder, the channel $\mathbb{C}_{E_i,E_{i+1},g,G_0}$ admits a foliation of sections of line (one-dimensional flat), denoted by $\{I_E\}$. Restricted on each $I_E$ $\alpha_{G_0}$ keeps constant, while restricted on a line $\Gamma_g$ orthogonal to these flats, the function is smooth because $G_0$ can be treated as a Hamiltonian with one degree of freedom when it is restricted on the cylinder. Therefore, the function $\alpha_{G_0}$ is smooth in $\mathbb{C}_{0,E_1,g,G_0}$ and $\mathbb{C}_{E_i,E_{i+1},g,G_0}$.
\begin{pro}\label{pro1}
There exists a number $N>1$, independent of $\epsilon$, so that the Aubry set for $c\in\Gamma_g\cap\alpha^{-1}_{G_{\epsilon,0}}(E)$ with $E\ge N\epsilon^{d}$ lies in the NHIC, each orbit in this set does not hit the energy level set $G_{\epsilon,0}^{-1}(E)$ with $E\le\epsilon^d$.
\end{pro}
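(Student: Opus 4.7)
The plan is to combine the overflowing normally hyperbolic cylinder of Theorem \ref{cylinderforepsilon} with the energy-drift estimate of Lemma \ref{lem2.4.} and an upper semi-continuity argument for Aubry sets.

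First I would show that every orbit $z(s)$ in $\tilde{\mathcal{A}}(c)$ is energy-localized: $|G_{\epsilon,0}(z(s),s)-E|=O(|\ln E|\,\epsilon^{\sigma})$ for all $s$. The point is that an Aubry orbit for $c$ is chain-recurrent with rotation vector proportional to $g$, so between two successive returns to a fixed small neighbourhood of $z(s_0)$ the orbit shadows an $(E,g)$-minimal periodic orbit of $G_0$ whose period is of order $|\ln E|/\lambda_1$ by Formula (\ref{period}). Applying the integration-by-parts bound of Lemma \ref{lem2.4.} on each such return interval and summing the geometric contributions yields the claimed estimate. If $d$ is chosen as in (\ref{d>0}) with the additional constraint $d<\sigma$, and $N$ is sufficiently large, the assumption $E\ge N\epsilon^{d}$ then forces the orbit to remain inside the annular region $\{2\epsilon^{d}\le G_{0}\le E_{1}+\Delta-2\epsilon^{d}\}$, where the modified Hamiltonian $G'_\epsilon$ of (\ref{modification}) coincides with $G_{\epsilon,0}$.

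Second, I would use the invariant splitting (\ref{cylindereq11}) to force the orbit onto the cylinder. Any orbit that remains in the annular region for all $s\in\mathbb{R}$ must lie on the unique locally invariant center manifold, because by (\ref{cylindereq11}) the transverse bundles $T_zN^{\pm}$ are expanded (resp.~contracted) at a rate at least $\Lambda_2^{-1}E_{d}^{-\lambda_2/\lambda_1+\nu}$ on each time step $s\ge T_{E_d}$, so any bi-infinite orbit off the NHIC would accumulate infinite transverse energy in backward or forward time. But the locally invariant center manifold passing through the unperturbed periodic orbit is exactly $\tilde\Pi^{\epsilon}_{\epsilon^{d},E_{1}+\Delta-\epsilon^{d},g}$ by Theorem \ref{cylinderforepsilon}. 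Finally, the existence of orbits in $\tilde{\mathcal{A}}(c)$ on this cylinder follows from upper semi-continuity of the Mañé potential: for $G_0$ the Aubry set consists of $(E,g)$-minimal periodic orbits on $\Pi_{0,E_1+\Delta,g}$ by Theorem \ref{mainth}, and an $O(\epsilon^{\sigma})$ perturbation of a Tonelli Hamiltonian displaces the Aubry set by $o(1)$ in Hausdorff distance, hence places $\tilde{\mathcal A}_{G_{\epsilon,0}}(c)$ inside the graphical neighbourhood of the NHIC produced by Theorem \ref{cylinderforepsilon}.

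The main obstacle is the energy-localization step, because the return time grows like $|\ln E|$ as $E\downarrow 0$ while the drift per unit time is only $O(\epsilon^{\sigma})$; this is what fixes the admissible range $d<\sigma$ and the need for a large buffer constant $N$. A secondary subtlety is that an Aubry orbit is not itself periodic, so the `return time' must be defined intrinsically, which I would do by projecting $z(s)$ along the strong stable/unstable foliation of the NHIC onto the genuinely periodic orbit of $G_0$ of the same cohomology class, an operation justified by the $\lambda$-lemma combined with the normal hyperbolicity of Theorem \ref{cylinderthm1}. Once the energy localization is established, the second assertion of the proposition, that no orbit in $\tilde{\mathcal A}(c)$ enters $\{G_{\epsilon,0}\le\epsilon^{d}\}$, follows immediately: the fluctuation $O(|\ln E|\epsilon^{\sigma})$ is strictly smaller than $E-\epsilon^{d}$ whenever $E\ge N\epsilon^{d}$ with $N$ large and $d<\sigma$.
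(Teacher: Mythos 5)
There is a genuine gap in the first and decisive step of your argument, the claimed energy localization $|G_{\epsilon,0}(z(s),s)-E|=O(|\ln E|\,\epsilon^{\sigma})$ for all $s$. Lemma \ref{lem2.4.} gives a drift bound of $O((S+1)\epsilon^{\sigma})$ over a single return interval of length $S\sim|\ln E|$. There is nothing ``geometric'' to sum here: the drift is additive, not decaying, so over $n$ returns the cumulative deviation may be of order $n|\ln E|\,\epsilon^{\sigma}$, which is unbounded as $n\to\infty$. Recurrence of the Aubry orbit ensures the energy oscillates, but it does not bound the amplitude of the oscillation, so your step one is not established by the material cited. Once that step fails, the subsequent appeal to the invariant splitting and to upper semi-continuity has nothing to stand on, because one no longer knows the orbit stays in the region where $G'_{\epsilon}=G_{\epsilon,0}$ and where the overflowing cylinder of Theorem \ref{cylinderforepsilon} is genuinely invariant.

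The paper's proof sidesteps exactly this difficulty by arguing by contradiction and by using the \emph{variational} (static) characterization of Aubry orbits rather than any purely dynamical energy control. Suppose some Aubry orbit reaches the level $G'^{-1}_{\epsilon}(\epsilon^{d})$ at time $s_0$; then over a single return interval $[s_0,s_0+S]$ with $S=O(|\ln\epsilon^{d}|)$, the $c$-static property gives the near-vanishing of the Lagrangian action (\ref{cylindereq20}), while a computation comparing with a nearby $(E,g)$-minimal orbit of $G_0$, combined with the explicit asymptotics (\ref{cylindereq15}) of $\partial\alpha_{G_0}$ and the strict convexity of $\alpha_{G_0}$, shows that the same action is bounded below by a fixed multiple of $\epsilon^{d}$ (formula (\ref{cylindereq21}) and the estimates (\ref{difference})--(\ref{cylindereq19})). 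Since $d<\sigma-\mu_6$, these two bounds are incompatible for small $\epsilon$, which rules out the excursion entirely. The crucial input you are missing is precisely the lower bound on the action coming from the $\alpha$-function geometry: it converts the apparent drift of order $|\ln\epsilon|\,\epsilon^{\sigma}$ per return into a contradiction, instead of attempting (unsuccessfully) to control the drift uniformly in time. Your projection via the strong stable/unstable foliation is a reasonable device for comparing $z(s)$ to a reference $G_0$-orbit, and something similar is implicit in the paper's comparison with a $c'$-minimal orbit $z'(s)$, but by itself it cannot substitute for the variational inequality.
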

\begin{proof}
We only need to prove the conclusion for the Hamiltonian $G'_{\epsilon}$ of (\ref{modification}), because $G'_{\epsilon}=G_{\epsilon,0}$ when it is restricted on the set where $G'_{\epsilon}\in[\epsilon^d,E_1]$. So, each orbit in the Aubry set lies in the cylinder forever. If the proposition does not hold, there would exist an orbit $z(s)$ in the Aubry set for $c\in\Gamma_g\cap\alpha^{-1}_{G'_{\epsilon}}(N\epsilon^{d})$, which hits the energy level $G'^{-1}_{\epsilon}(\epsilon^d)$ at the time $s=s_0\mod\sqrt{\epsilon}$, i.e. $G'_{\epsilon}(z(s_0),s_0)=\epsilon^d$. Due to Lemma \ref{lem2.4.}, it returns to a neighborhood of $z(s_0)$ after a time $S=O(|\ln\epsilon^d|)$ (cf. formula (\ref{period})) and
\begin{equation}\label{cylindereq19}
|G'_{\epsilon}(z(S+s_0),S+s_0)-G'_{\epsilon}(z(s_0),s_0)|\le K(S+1)\epsilon^{\sigma}.
\end{equation}
As $G_0^{-1}(E)\cap\Pi_{E_d,E_{1},g}$ is an invariant circle for $\Phi^t_{G_0}$, the perturbed cylinder is $O(\epsilon^{\sigma})$-close to the original one \cite{BLZ} and the cylinder may be crumpled but at most up to the order $O(E^{-2\mu_6})$ (cf. (\ref{cylindereq9})), so there is $S=O(|\ln\epsilon^d|)$ such that
$$
\|z(S+s_0)-z(s_0))\|\le C_{13}(S+1)\epsilon^{\sigma-\mu_6}.
$$
Since $z(s)$ is in the Aubry set for the first cohomology class $c$, the curve $x(s)$ is $c$-static. Using $\alpha_{G'_{\epsilon}}$ and $\alpha_{G_0}$ to denote the $\alpha$-function for $G'_{\epsilon}$ and $G_0$ respectively, it follows that
\begin{equation}\label{cylindereq20}
\Big|\int_{s_0}^{S+s_0}(L_{G'_{\epsilon}}(x(s),\dot x(s),s)-\langle c,\dot x(s)\rangle+\alpha_{G'_{\epsilon}}(c))ds \Big|\le C_{14}(S+1)\epsilon^{\sigma-\mu_6}.
\end{equation}
Since the cylinder $\Pi_{E_d,E_{1},g}\times\sqrt{\epsilon}\mathbb{T}$ is $\epsilon^{\sigma}$-close to $\tilde\Pi_{E_d,E_{1},g}$, $\exists$ a $c'$-minimal orbit $z'(s)$ of $\Phi^s_{G_0}$ on $\Pi_{E_d,E_{1},g}$ such that $\alpha_{G_0}(c')=\epsilon^d$ and $\|z'(s_0)-z(s_0)\|\le O(\epsilon^{\sigma-\mu_6})$. Let $\Gamma_x=\bigcup_{s=s_0}^{s_0+S}(x(s),y(s))$ and $\Gamma_{x'}=\bigcup_{s=s_0}^{s_0+S'}(x'(s),y'(s))$ where $S'$ is the period of $x'(s)$, we have an estimate on the Hausdorff distance
$d_H(\Gamma_x,\Gamma_{x'})\le O((S+1)\epsilon^{\sigma-\mu_6})$. So,
$$
\int_{\Gamma_x}\langle y,dx\rangle-\int_{\Gamma_{x'}}\langle y,dx\rangle=O((S+1)\epsilon^{\sigma-\mu_6}).
$$
Because of $G_0(x'(s),y'(s))\equiv\alpha_{G}(c')$ we have
\begin{align}
0&=\int_0^{S'}(L_{G_0}(x'(t),\dot x'(t))-\langle c',\dot x'(t)\rangle+\alpha_{G_0}(c'))dt \notag\\
&=\int_0^{S'}\langle y'(s)-c',\dot x'(s)\rangle ds\notag
\end{align}
Let $\bar x(s)$ be the lift of $x(s)$ to the universal covering space, it follows that
\begin{equation}\label{guodu}
\begin{aligned}
&\int_{s_0}^{S+s_0}\langle y(s)-c,\dot x(s)\rangle ds\\
=&\int_{s_0}^{S+s_0}\langle y(s)-c',\dot x(s)\rangle ds-\int_{s_0}^{S'+s_0}\langle y'(s)-c',\dot x'(s)\rangle ds\\
&-\langle c-c',\bar x(S+s_0)-\bar x(s_0)\rangle\\
=&\int_{\Gamma_x}\langle y,dx\rangle-\int_{\Gamma_{x'}}\langle y',dx'\rangle+O((S+1)\epsilon^{\sigma-\mu_6})\\
&-\langle c-c',\bar x(S+s_0)-\bar x(s_0)\rangle\\
=&-\langle c-c',\bar x(S+s_0)-\bar x(s_0)\rangle+O((S+1)\epsilon^{\sigma-\mu_6})
\end{aligned}
\end{equation}
As $G'_{\epsilon}(z(s_0),s_0)=\alpha_{G_0}(c')$, it follows from (\ref{cylindereq19}) that, for all $s\in[s_0,S+s_0]$, we have
$$
\alpha_{G'_{\epsilon}}(c)-G'_{\epsilon}(x(s),y(s),s)\ge\alpha_{G'_{\epsilon}}(c)-\alpha_{G_0}(c')-O((S+1) \epsilon^{\sigma-\mu_6}).
$$
Consequently, by using the formulae (\ref{cylindereq19}) and (\ref{guodu}) we have
\begin{equation}\label{cylindereq21}
\begin{aligned}
&\int_{s_0}^{S+s_0}(L_{G'_{\epsilon}}(x(s),\dot x(s),s)-\langle c,\dot x(s)\rangle+\alpha_{G'_{\epsilon}} (c))ds\\
=&\int_{s_0}^{S+s_0}\Big(\langle y(s)-c,\dot x(s)\rangle+(\alpha_{G'_{\epsilon}}(c)- G'_{\epsilon}(x(s),y(s),s))\Big)ds\\
\ge&\, (\alpha_{G'_{\epsilon}}(c)-\alpha_{G_0}(c'))S-\langle c-c',\bar x(S+s_0)-\bar x(s_0)\rangle-O((S+1)\epsilon^{\sigma}).
\end{aligned}
\end{equation}

To derive contradiction between the right-hand-side of above inequality and (\ref{cylindereq20}), we note that the function $\alpha_{G_0}$ keeps constant along each flat in the channel $\mathbb{C}_{0,E_1,g,G_0}$. The frequency vector $\omega(c)$ is therefore parallel to the direction of $\Gamma_g$. To get the norm of $\omega(c)$, we assume the general case $g=k_1g_1+k_{2}g_{2}$ and consider the Hamiltonian in the finite covering space $\bar M=\bar k_1\mathbb{T}\times\bar k_2\mathbb{T}$ where $\bar k_m=k_1g_{1m}+k_{2}g_{2m}$ for $m=1,2$ if we write $g_j=(g_{j1},g_{j2})$ for $j=1,2$. In the space $T^*\bar M$ there are $k_1+k_{2}$ fixed points for the return map. According to Formula (\ref{regularenergyeq2}), for small $E>0$ the period of the frequency $\nu g$ is $T_{\nu g}=\lambda_1^{-1}(k_1+k_2)(-\ln E+\tau_g(E))$ where $\tau_g(E)$ is uniformly bounded as $E\to 0$. Since $\partial\alpha_{G_0}=\omega=\nu g$, one has
\begin{equation}\label{cylindereq15}
\frac{\lambda_1}{(|\ln E|+\tau_g(E))(k_1+k_2)}=|\omega|,\qquad \forall\ c\in \Gamma_g.
\end{equation}
Let $c^*$ be the class such that $\alpha_{G_0}(c^*)=\alpha_{G'_{\epsilon}}(c)$, then $\alpha_{G_0}(c^*)-\alpha_{G_0}(c')=\epsilon^d>0$. Since $\alpha_{G_0}$ is convex, $\alpha_{G_0}(c^*)>\alpha_{G_0}(c')$,
$$
\langle c^*-c',\partial\alpha_{G_0}(c^*)\rangle>\alpha_{G_0}(c^*)-\alpha_{G_0}(c').
$$
As $c^*,c'\in\Gamma_g$, $c^*-c'$ is parallel to $\partial\alpha_{G_0}(c^*)$. It follows from (\ref{cylindereq15}) that some constant $C_{15}>0$ exists such that
\begin{equation}\label{difference}
|c^*-c'|\ge \frac 1{\|\partial\alpha_{G_0}(c^*)\|}\Big(\alpha_{G_0}(c^*)-\alpha_{G_0}(c')\Big)\ge C_{15}\epsilon^{d}|\ln\epsilon^{d}|.
\end{equation}
To measure the distance between $c^*$ and $c$, we exploit the convexity of the $\alpha$-function and get $|\langle c-c^*,\omega(c^*)\rangle|\le|\alpha_{G_0}(c^*)-\alpha_{G_0}(c)|=|\alpha_{G_{\epsilon,0}}(c)-\alpha_{G_0}(c)| =\epsilon^{\sigma}$. It follows from that the $\alpha$-function undergoes small variation: $|\alpha_L(c)-\alpha_{L'}(c)|\le\varepsilon$ for small perturbation $L'\to L$ with $\|L'-L\|_{C^1}\le\varepsilon$ (\cite{C11}). Therefore, we get
\begin{equation}\label{cylindereq18}
|c^*-c|\le C_{16}\epsilon^{\sigma}|\ln\epsilon^{d}|.
\end{equation}

Be aware that $\alpha_{G_0}$ is smooth and strictly convex when it is restricted on the line $\Gamma_g$ and the first cohomology classes $c',c^*\in\Gamma_g$ are uniquely determined so that $\alpha_0(c')=\epsilon^d$, $\alpha_0(c^*)=N\epsilon^d$ where the number $N$ is chosen such that
$$
\ln(N-1)=3\max\{|\sup\tau_g(E)|,1\}.
$$
Let $c''\in\Gamma_g$ such that $\alpha_0(c'')=(N-1)\epsilon^d$, we find that
\begin{equation*}
\alpha_{G_0}(c^*)-\alpha_{G_0}(c'')>\langle\omega'',c^*-c''\rangle, \qquad \alpha_{G_0}(c'')-\alpha_{G_0}(c')>\langle\omega',c''-c'\rangle,
\end{equation*}
where $\omega'=\partial\alpha_0(c')$ and $\omega''=\partial\alpha_0(c'')$. It follows that
\begin{equation}\label{cylindereq17}
\alpha_{G_0}(c^*)-\alpha_{G_0}(c')>\langle c^*-c',\omega'\rangle+\langle c^*-c'',\omega''-\omega'\rangle.
\end{equation}

In the way to get (\ref{difference}) one finds that
\begin{equation}\label{cylindereq16}
|c^*-c''|\ge C_{17}\epsilon^{d}|\ln\epsilon^{d}|,
\end{equation}
where the number $C_{17}$ depends on $N$. One also has
\begin{equation}\label{cylindereq19}
|\omega''-\omega'|\ge\frac{\lambda_1\max\{|\sup\tau_g(E)|,1\}}{(k_1+k_2)(|\ln\epsilon^d|+\tau_g(\epsilon^d))(|\ln\epsilon^d|-\ln(N-1)+ \tau_g((N-1)\epsilon^d))}.
\end{equation}
Since $\langle\omega''-\omega',c^*-c''\rangle=|\omega''-\omega'||c^*-c''|$ (restricted on the cylinder, the system has only one degree of freedom, so they are treated as scalers, not vectors), one obtains from (\ref{difference}), (\ref{cylindereq18}), (\ref{cylindereq17}), (\ref{cylindereq16}) and (\ref{cylindereq19}) that
$$
\alpha_{G_0}(c)-\alpha_{G_0}(c')-\langle c-c',\omega'\rangle\ge C_{18}\frac{\epsilon^{d}}{|\ln\epsilon^{d}|},
$$
from which we see that the right hand side of (\ref{cylindereq21}) is lower bounded by $C_{19}\epsilon^{d}$ where $C_{19}>0$ is a constant. Because $\mu_6$ is very small, the formula (\ref{cylindereq21}) contradicts (\ref{cylindereq20}) provided $\sigma>d+\mu_6$. It completes the proof.
\end{proof}
\begin{pro}\label{Aubrysetplace2}
If the Aubry set for $c\in\Gamma_g\cap\alpha^{-1}_{G_{\epsilon,0}}(E)$ is contained in the NHIC, and $E>0$ is independent of $\epsilon$, each orbit in this set does not hit the energy level set $G_{\epsilon,0}^{-1}(E\pm \epsilon^{\frac 13\sigma})$ if $\epsilon>0$ is sufficiently small.
\end{pro}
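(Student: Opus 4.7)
The strategy parallels Proposition~\ref{pro1} but is calibrated to the regime $E=O(1)$, where the NHIC dynamics is ``regular'' so that periods and frequencies stay bounded as $\epsilon\to0$. I argue by contradiction: assume that some orbit $z(s)=(x(s),y(s))\subset\tilde{\mathcal{A}}(c)$ hits $G_{\epsilon,0}^{-1}(E+\epsilon^{\sigma/3})$ at some time $s_0$ (the $-\epsilon^{\sigma/3}$ case is symmetric). First, I use Lemma~\ref{lem2.4.} to set up a long window. Fix $\tau\in(0,\sigma/3)$, say $\tau=\sigma/4$, and set $S=\lfloor\epsilon^{-\tau}\rfloor$; then Lemma~\ref{lem2.4.} yields $|G_{\epsilon,0}(z(s),s)-G_{\epsilon,0}(z(s_0),s_0)|\le K(S+1)\epsilon^\sigma=O(\epsilon^{\sigma-\tau})\ll\epsilon^{\sigma/3}$ for $s\in[s_0,s_0+S]$, so
\begin{equation*}
\alpha_{G_{\epsilon,0}}(c)-G_{\epsilon,0}(z(s),s)=-\epsilon^{\sigma/3}+O(\epsilon^{\sigma-\tau})\qquad\text{on }[s_0,s_0+S].
\end{equation*}

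Next I set up the comparison with a $c'$-minimizer. Let $c'\in\Gamma_g$ satisfy $\alpha_{G_0}(c')=E+\epsilon^{\sigma/3}$, and let $z'(s)$ be the associated $c'$-minimal periodic orbit of $G_0$ on the unperturbed cylinder; its period $T'$ and frequency $\omega':=\partial\alpha_{G_0}(c')$ are both $O(1)$ uniformly in $\epsilon$. This is precisely where the hypothesis $E=O(1)$ enters, replacing the singular $|\ln E|$-regime of Proposition~\ref{pro1}. Smoothness of $\alpha_{G_0}$ along $\Gamma_g$ together with $\omega=O(1)$ gives $|c-c'|\asymp\epsilon^{\sigma/3}$. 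Replace $S$ by the nearest $S'\in[S,S+T']$ for which $S'/T'\in\mathbb{Z}$. Theorem~\ref{cylinderforepsilon} provides $d_H(\tilde\Pi^\epsilon,\Pi\times\sqrt{\epsilon}\mathbb{T})=O(\epsilon^\sigma)$, so $z$ and $z'$ remain Hausdorff-close on $[s_0,s_0+S']$, yielding in particular the closing estimate $\|z(s_0+S')-z(s_0)\|\le C(S'+1)\epsilon^\sigma$ as in the proof of Proposition~\ref{pro1}.

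The third step is the upper/lower sandwich, in the spirit of (\ref{cylindereq20})--(\ref{cylindereq21}). The $c$-static property of $z$ combined with the closing estimate gives
\begin{equation*}
\Bigl|\int_{s_0}^{s_0+S'}\bigl(L_{G_{\epsilon,0}}(x,\dot x,s)-\langle c,\dot x\rangle+\alpha_{G_{\epsilon,0}}(c)\bigr)\,ds\Bigr|\le C_1(S'+1)\epsilon^\sigma.
\end{equation*}
For the lower bound, write $L_{G_{\epsilon,0}}=\langle y,\dot x\rangle-G_{\epsilon,0}$ and decompose $\langle y-c,\dot x\rangle=\langle y-c',\dot x\rangle+\langle c'-c,\dot x\rangle$: the first piece integrates to $O((S'+1)\epsilon^\sigma)$ by Hausdorff proximity to $z'$ (whose full-period action vanishes by $c'$-minimality and energy conservation), while the second equals $\langle c'-c,\bar x(s_0+S')-\bar x(s_0)\rangle=S'\langle c'-c,\omega'\rangle+O((S'+1)\epsilon^\sigma)$. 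Strict convexity of $\alpha_{G_0}|_{\Gamma_g}$ yields $\langle c'-c,\omega'\rangle\ge\alpha_{G_0}(c')-\alpha_{G_0}(c)+\tfrac{\mu}{2}|c-c'|^2\ge\epsilon^{\sigma/3}+\mu_1\epsilon^{2\sigma/3}$ for some $\mu,\mu_1>0$ independent of $\epsilon$. Combining with the Step~1 energy deficit,
\begin{equation*}
\int_{s_0}^{s_0+S'}\bigl(L_{G_{\epsilon,0}}-\langle c,\dot x\rangle+\alpha_{G_{\epsilon,0}}(c)\bigr)\,ds\ge S'\mu_1\epsilon^{2\sigma/3}-O(S'\epsilon^{\sigma-\tau})-O((S'+1)\epsilon^\sigma),
\end{equation*}
a lower bound of order $\epsilon^{2\sigma/3-\tau}$ that dominates the upper bound $O(\epsilon^{\sigma-\tau})$ since $2\sigma/3<\sigma$; contradiction.

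The main obstacle is that the drift term $\langle c-c',\bar x(s_0+S')-\bar x(s_0)\rangle$ is, to leading order, of exactly the same order $S'\epsilon^{\sigma/3}$ as the energy deficit and would cancel it if $\alpha_{G_0}|_{\Gamma_g}$ were affine. In Proposition~\ref{pro1} this cancellation was tamed by the tiny frequency $|\omega|\sim1/((k_1+k_2)|\ln E|)$ of the homoclinic regime; here, with $\omega'=O(1)$, the only remaining handle is the strict quadratic convexity residual, and the window $S'\sim\epsilon^{-\tau}$ must be placed in the narrow range $\tau<\sigma/3$ so as to amplify the $O(\epsilon^{2\sigma/3})$ gap past both the energy-wandering noise $O(S'\epsilon^{\sigma-\tau})$ and the closing-estimate error $O(S'\epsilon^\sigma)$.
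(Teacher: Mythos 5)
Your proposal follows the same strategy as the paper's proof: compare the Aubry orbit to a $c'$-minimal orbit on the unperturbed cylinder, control the energy drift via Lemma~\ref{lem2.4.}, and use strict convexity of $\alpha_{G_0}$ along $\Gamma_g$ to extract an $O(\epsilon^{2\sigma/3})$ lower bound on the static action that contradicts the $O(\epsilon^{\sigma})$ upper bound coming from the $c$-static property, exactly as in the contradiction between (\ref{cylindereq20}) and (\ref{cylindereq21}). The only deviation is cosmetic and in fact a misconception: you stretch the window to $S'\sim\epsilon^{-\tau}$ believing this is needed to ``amplify the $O(\epsilon^{2\sigma/3})$ gap,'' but since both the upper bound (\ref{cylindereq20}) and the lower bound (\ref{cylindereq21}) scale linearly in the window length, the window size cancels and no amplification is achieved; the paper simply takes $S=O(1)$ (one period of the $c'$-minimal orbit, since $E$ is of order one), which already yields $C\epsilon^{2\sigma/3}>C'\epsilon^{\sigma-\mu_6}$ for small $\epsilon$.
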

\begin{proof}
If an orbit $z(s)$ of the Aubry set hits the energy level $G_{\epsilon,0}^{-1}(E- \epsilon^{\frac 13\sigma})$, following the proof of \ref{pro1} we also have (\ref{cylindereq20}) and (\ref{cylindereq21}). Again, we are going to show the contradiction between them.

Let $c^*$ be the class such that $\alpha_{G_0}(c^*)=\alpha_{G_{\epsilon,0}}(c)$, then $\alpha_{G_0}(c^*)-\alpha_{G_0}(c')=\epsilon^{\frac 13\sigma}$. Let $c'\in\Gamma_g$ such that $\alpha_{G_0}(c')=E-\epsilon^{\frac 13\sigma}$. Similar to way to get (\ref{difference}), note the period is of order one, we obtain
\begin{equation}\label{difference1}
|c^*-c'|\ge C_{20}\epsilon^{\frac13\sigma}.
\end{equation}
As $\alpha_{G_0}$ is strictly convex, one obtains from (\ref{difference1}), (\ref{cylindereq18}) and (\ref{cylindereq17}) that
$$
\alpha_{G_0}(c)-\alpha_{G_0}(c')-\langle c-c',\omega'\rangle=\frac 12|\partial^2 \alpha_{G_0}(\nu c+(1-\nu)c')||c'-c|^2\ge C_{21}\epsilon^{\frac23\sigma},
$$
from which we see that the right hand side of (\ref{cylindereq21}) is lower bounded by $O(\epsilon^{\frac 23\sigma})$. As $\mu_6$ is very small, the formula (\ref{cylindereq21}) contradicts (\ref{cylindereq20}) provided $\sigma>d+\mu_6$. The proof for $E+\epsilon^{\frac 13\sigma}$ appears the same.
\end{proof}

\begin{proof}[Proof of Theorem \ref{AubrySet}]
For the Hamiltonian $G_0$ with $V\in\mathfrak{V}$, there are at most finitely bifurcation points $0<E_1,E_2,\cdots E_k\le E'_1$. The Aurby set $\tilde{\mathcal{A}}(c)$ for $G_0$ is a $(E,g)$-minimal orbit if $c\in\mathscr{L}_{\beta_{G_0}}(\nu g)$ and $\alpha_{G_0}(c)\neq E_i$ for $i=1,2,\cdots k$. At those bifurcation points the Aubry set consists of exactly two $(E,g)$-minimal periodic orbits. Such periodic orbits make up several pieces of NHICs which admit a continuation to the energy level of $E_i\pm\Delta$, denoted by $\Pi_{0,E_1+\Delta,g}$ and $\Pi_{E_i-\Delta,E_{i+1}+\Delta,g}$ respectively. The continuation is made up by local $(E,g)$-minimal periodic orbits. Restricted on the cylinder $\Pi_{E_i-\Delta,E_{i+1}+\Delta,g}$, the Hamiltonian has one degree of freedom, associated with a smooth $\alpha$-function denoted by $\alpha_i$: $c_1\in [c_1^i-\Delta c_1,c_1^{i+1}+\Delta c_1]\to\mathbb{R}$. The first cohomology class $c_1$ determines uniquely a flat $I_E\subset \mathbb{C}_{E_i,E_{i+1},g,G_0}$ such that $\alpha_i(c_1)=\alpha_{G_0}(I_E)$ if $c_1\in[c_1^i,c_1^{i+1}]$. Indeed, one has $\alpha_{G_0}(c_1,c_2)=\alpha_i(c_1)$ if $(c_1,c_2)\in I_{\alpha_{G_0}(c_1,c_2)}$ and we use certain coordinates so that $g=(1,0)$. By the definition, we have $\alpha_{i-1}(c_1^{i})=\alpha_i(c_1^{i})$, $\alpha_{i-1}(c_1)\ge\alpha_i(c_1)$ for $c_1\in[c_1^i,c_1^i+\Delta c_1]$ and $\alpha_{i}(c_1)\le\alpha_{i+1}(c_1)$ for $c_1\in[c_1^{i+1}-\Delta c_1,c_1^{i+1}]$. It follows from the generic condition ({\bf H3}) that
$$
\frac d{dc_1}\alpha_{i-1}(c_1^i)>\frac d{dc_1}\alpha_i(c_1^i), \qquad \forall\ i.
$$
Under the small perturbation $\epsilon^{\sigma}R$, large part of NHICs survive, such as $\tilde\Pi_{\epsilon^d,E_1+\Delta-\epsilon^d}$ and $\tilde\Pi_{E_i-\Delta+\epsilon^d,E_{i+1}+\Delta-\epsilon^d}$. The former is $\epsilon^{\sigma}$-close to $\Pi_{\epsilon^d,E_1+\Delta-\epsilon^d,g} \times\sqrt{\epsilon} \mathbb{T}$, the latter is $\epsilon^{\sigma}$-close to $\Pi_{E_i-\Delta+\epsilon^d,E_{i+1}+\Delta-\epsilon^d,g} \times\sqrt{\epsilon}\mathbb{T}$.

For the Hamiltonian $G_0$ and the class $c^i\in\mathbb{C}_{g,G_0}$ with $\alpha_{G_0}(c^i)=E_i$, the Aubry set consists of two $\nu_ig$-minimal periodic orbits, the Ma\~n\'e set contains these two periodic orbits plus some orbits connecting them (hetroclinic orbits). For the Hamiltonian $G_{\epsilon,0}$ and the class $c\in\mathbb{C}_{\epsilon^d,E'_1,g}$ so that $|\alpha_{G_{\epsilon,0}}(c)-E_i|\le\epsilon^{\sigma}$, the Ma\~n\'e set $\tilde{\mathcal{N}}(c)$ stays in a small neighborhood of cylinders $\tilde\Pi_{E_{i-1}-\Delta+\epsilon^d,E_{i}+\Delta -\epsilon^d,g}$ and $\tilde\Pi_{E_{i}-\Delta+\epsilon^d,E_{i+1}+\Delta-\epsilon^d,g}$. It is due to the upper semi-continuity of Ma\~n\'e on small perturbations. So, it follows from the hyperbolic structure that each ergodic minimal measure for this class has its support in the cylinder either $\tilde\Pi_{E_{i-1}-\Delta+\epsilon^d,E_{i}+\Delta -\epsilon^d,g}$ or $\tilde\Pi_{E_{i}-\Delta+\epsilon^d,E_{i+1}+\Delta-\epsilon^d,g}$.

Since the energy level set $G_{\epsilon,0}^{-1}(E)$ is in $\epsilon^{\sigma}$-neighborhood of $G_0^{-1}(E)$, we obtain from Proposition \ref{Aubrysetplace2} and the condition ({\bf H3}) that for $c\in\mathbb{C}_{g,G_{\epsilon,0}}$ such that $\alpha_{G_{\epsilon,0}}(c)$ is close to $E_i$ we have
$$
\tilde{\mathcal{A}}(c)\subset \tilde\Pi_{E_{i-1},E_i+K\epsilon^{\frac \sigma 3},g}\cup\tilde\Pi_{E_{i}-K\epsilon^{\frac \sigma 3},E_{i+1},g}
$$
where $K\ge 2\max\{(\frac d{dc_1}\alpha_{i-1}(c_1^i)-\frac d{dc_1}\alpha_i(c_1^i))^{-1},1\}\|R\|_{\infty}$. Since $\Delta>0$ is independent of $\epsilon$, the Aubry set completely lies on the cylinders if $\epsilon>0$ is suitably small.

To verify that the Aubry set $\tilde{\mathcal{A}}(c)$ with $\alpha_{G_{\epsilon,0}}(c)=2\epsilon^d$ is contained in the cylinder, we apply Theorem \ref{cylinderforepsilon}. The cylinder $\tilde\Pi_{\frac 12\epsilon^d,E_1+\Delta-\epsilon^d,g}$ lies in $O(\epsilon^{\sigma})$-neighborhood of the cylinder $\Pi_{\frac 12\epsilon^d,E_1+\Delta-\epsilon^d,g}\times\sqrt{\epsilon}\mathbb{T}$. By the choice of the number $d$ in (\ref{d>0}), we see that $G_{\epsilon,0}(z,\theta)\ge\epsilon^d$ if $(z,\theta)\in\tilde\Pi_{\frac 12\epsilon^d,E_1+\Delta-\epsilon^d,g}$ provided $\epsilon>0$ is sufficiently small. Applying Lemma \ref{pro1}, we then complete the proof.
\end{proof}

This section also completes the proof of the second part of Theorem \ref{mainresult}.

\section{Criterion for strong and weak double resonance}
\setcounter{equation}{0}
Given a perturbation $\epsilon P(p,q)$, it is natural to ask, along the resonant path $\Gamma'$, how many double resonances need to be treated as strong double resonance. What we write below is in fact the proof of Theorem \ref{consequence}.

\begin{proof}[The proof of Theorem \ref{consequence}] On the path $\Gamma'$ the resonance condition $\langle\partial h(p),k'\rangle=0$ is always satisfied and at each double resonant point some other $k''\in\mathbb{Z}^3$ exists such that $k''$ is linearly independent of $k'$ and $\langle\partial h(p),k''\rangle=0$ holds. Recall the process of KAM iteration, the main part of the resonant term is obtained by averaging the perturbation over a circle determined by these two resonant relations. It takes the form
$$
Z=Z_{k'}(p,\langle k', q\rangle)+Z_{k',k''}(p,\langle k', q\rangle,\langle k'',q\rangle)
$$
where
$$
Z_{k'}=\sum_{j\in\mathbb{Z}\backslash\{0\}}P_{jk'}(p)e^{j\langle k',q\rangle i}, \qquad
Z_{k',k''}=\sum_{(j,l)\in\mathbb{Z}^2, l\neq 0}P_{jk'+lk''}(p)e^{(j\langle k',q\rangle+l\langle k'',q\rangle)i}.
$$
Since $P$ is $C^r$-function, the coefficient $P_{jk'+lk''}$ is bounded by
$$
|P_{jk'+lk''}|\le 8\pi^3\|P\|_{C^r}\|jk'+lk''\|^{-r},
$$
which induces the estimation
\begin{equation}\label{criterioneq1}
\|Z_{k',k''}\|_2\le d\|P\|_{C^r}\|k''\|^{-r+2}
\end{equation}
where $d=d(k')$ depends on $k'$. Just like the procedure we did in the second section, after the rescaling and linear coordinate transformation we get the main part of the system
$$
G_0=\frac 12\langle Ay,y\rangle-V_{k'}(x_2)-V_{k',k''}(x_1,x_2).
$$
Assume $V_{k'}$ has a non-degenerate minimal point at $x_2^*$, i.e. $\ddot V_{k'}(x_2^*)=\lambda>0$, the system $\langle Ay,y\rangle-V_{k'}(\langle k',q\rangle)$ possesses a NHIC
$$
\Pi_{k',k''}^0=\{y=\xi y_0,\xi\in\mathbb{R},x_2=x_2^*,x_1\in\mathbb{T}\}
$$
where $y_0$ solves the equation $(1,0)^t=Ay_0$. Applying the normally hyperbolic invariant manifold theorem, one obtains from the estimate (\ref{criterioneq1}) that some positive number $d_1=d_1(\lambda)>0$ exists such that $\Phi_{G_0}^t$ also admits a normally hyperbolic and invariant cylinder $\Pi_{k',k''}$ close to $\Pi_{k',k''}^0$ provided
\begin{equation}\label{criterioneq2}
\|k''\|^{r-2}\ge\frac {d(k')}{d_1(\lambda)}\|P\|_{C^r}.
\end{equation}
It is a criterion to see whether the double resonance is thought as weak resonance and can be treated in the way for {\it a priori} unstable system.

However, we obtain the potential $V$ by fixing $y=y''$. So, the non-degeneracy of the minimal point depends on the position of double resonant point on the resonant path $\Gamma'$, i.e. the number $\lambda$ depends on the $y\in\Gamma'$. Because the set of double resonant points is dense along the resonant path, it appears necessary to ask whether it holds simultaneously for all $p\in\Gamma'$ that the minimal point of $Z_{k'}(p,x)$ is non-degenerate when it is treated as a function of $x=\langle k', q\rangle$. Fortunately, we have the following result \cite{CZ2}
\begin{theo}\label{singlecylinder}
Assume $M$ is a closed manifold with finite dimensions, $F_{\zeta}\in C^r(M,\mathbb{R})$ with $r\ge 4$ for each $\zeta\in[\zeta_0,\zeta_1]$ and $F_{\zeta}$ is Lipschitz in the parameter $\zeta$. Then, there exists an open-dense set $\mathfrak{V}\subset C^r(M,\mathbb{R})$ so that for each $V\in\mathfrak{V}$, it holds simultaneously for all $\zeta\in[\zeta_0,\zeta_1]$ that the minimal of $F_{\zeta}+V$ is non-degenerate. In fact, given $V\in\mathfrak{V}$ there are finitely many $\zeta_i\in[\zeta_0,\zeta_1]$ such that $F_{\zeta}+V$ has only one global minimal $($maximal$)$ point for $\zeta\ne\zeta_i$ and has two global minimal $($maximal$)$ point if $\zeta=\zeta_i$.
\end{theo}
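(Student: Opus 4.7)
The plan is to establish openness and density of $\mathfrak{V}$ separately; density is the main content and proceeds via parametric transversality in the 2-jet bundle.

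For openness, suppose $V_0$ satisfies the stated conclusion, with bifurcation parameters $\zeta_1 < \cdots < \zeta_N$. At each $\zeta_i$, the two global minima $x_i^{\pm}$ of $F_{\zeta_i} + V_0$ are non-degenerate, so the implicit function theorem applied to $\nabla(F_\zeta + V)(x) = 0$ yields smooth continuations $x^\pm(\zeta, V)$ in a neighborhood of $(\zeta_i, V_0)$, together with height functions $h^\pm(\zeta, V) = (F_\zeta + V)(x^\pm(\zeta, V))$. Building into the working definition of $\mathfrak{V}$ the codimension-one generic condition $\partial_\zeta(h^+ - h^-)(\zeta_i, V_0) \neq 0$ (analogous to $({\bf H3})$), non-degeneracy and transverse height-crossing persist under small $C^r$-perturbations. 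Between bifurcation values the unique global minimum continues smoothly, and compactness of $[\zeta_0, \zeta_1]$ yields a $C^r$-neighborhood of $V_0$ inside $\mathfrak{V}$.

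For density, I would consider the evaluation map $\Psi: C^r(M) \times [\zeta_0, \zeta_1] \times M \to J^2(M, \mathbb{R})$, $(V,\zeta,x) \mapsto j_x^2(F_\zeta + V)$. Since $V$ freely prescribes the 2-jet at each point, $\Psi$ is a submersion, and the parametric transversality theorem produces a residual set $\mathfrak{V}_1 \subset C^r(M)$ on which $\Psi_V: (\zeta,x) \mapsto j_x^2(F_\zeta + V)$ is transverse to the codimension-$(n+1)$ stratum $\Sigma_1 \subset J^2(M,\mathbb{R})$ of degenerate critical jets (those with $\nabla f = 0$ and $\det \partial^2 f = 0$), where $n = \dim M$. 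Since the source has dimension $n+1$, $\Psi_V^{-1}(\Sigma_1)$ is zero-dimensional, hence finite by compactness. A second transversality condition on the smooth critical-point 1-manifold $\{\nabla(F_\zeta + V) = 0\}$, away from this finite set, arranges that the height $h(\zeta,x) = (F_\zeta + V)(x)$ exhibits only isolated simple crossings between distinct local-minimum branches with no triple coincidence; this is a further codimension-two genericity condition, also residual.

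The last step arranges that at each of the finitely many isolated degenerate points $(\zeta_*, x_*)$ the function value strictly exceeds the minima of the other branches, so that a degenerate critical point is never the global minimum. This is achieved by a finite sequence of $C^r$-small bumps supported in neighborhoods of the global-minimum branches that are disjoint from the $x_*$'s; these lower the minimum branches without altering the $2$-jets at the degenerate points or disturbing the transverse crossings. The resulting $V$ satisfies the full conclusion, and intersecting with the openness argument produces an open-dense $\mathfrak{V}$. The main obstacle is precisely this decoupling step: enforcing a constraint indexed by the continuum of $\zeta$ using only finitely many local corrections. The reason it succeeds is that the first transversality step produces only finitely many degenerate critical points, isolated both in $\zeta$ and in $x$, so that local surgery with disjointly supported bumps decouples global-minimum tracking from 2-jet normalization; the bookkeeping consists in verifying that non-degeneracy and height-crossing transversality survive each perturbation.
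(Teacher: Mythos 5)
The paper does not supply its own proof of this theorem; it is quoted verbatim from \cite{CZ2} and used as a black box, so there is nothing in the present text to compare your argument against. Evaluating your proposal on its own terms, I see a genuine gap.

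Your density step rests on parametric transversality for the evaluation map $\Psi(V,\zeta,x)=j^2_x(F_\zeta+V)$ into $J^2(M,\mathbb{R})$, from which you want to conclude that $\Psi_V^{-1}(\Sigma_1)$ is zero-dimensional and hence finite. That dimension count is only valid if $\Psi_V$ is (at least) $C^1$ jointly in $(\zeta,x)$, and more precisely if the parametric theorem you invoke applies, which requires that the full family be a $C^k$ map for $k$ exceeding the relevant dimension deficit. But the hypothesis here gives only Lipschitz dependence of $F_\zeta$ on $\zeta$. A merely Lipschitz map from an $(n+1)$-dimensional source can have a positive-dimensional preimage of a codimension-$(n+1)$ subset, so you cannot conclude that the degenerate critical locus is finite, and the whole subsequent surgery collapses. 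The same obstruction enters the openness argument, where the persistence of bifurcations is justified by $\partial_\zeta(h^+-h^-)(\zeta_i,V_0)\neq 0$; this derivative need not exist under the Lipschitz hypothesis. Any proof consistent with the stated regularity must avoid differentiating in $\zeta$, which is precisely why the proof cannot run through Thom-type transversality in the parameter.

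Secondary but also substantive: the final ``surgery'' step is circular as written. You propose to lower the ``global-minimum branches'' with bumps disjoint from the degenerate points, but identifying a finite collection of smooth global-minimum branches with transverse simple crossings is exactly the structure you are trying to produce, not something available before the construction is complete. And lowering a branch can hand the global minimum to a different critical point that is itself degenerate; without an a priori bound on which points can become global minima as $\zeta$ sweeps the interval, a finite collection of localized bumps does not obviously resolve all offending $\zeta$ simultaneously. A correct argument will need some mechanism---e.g.\ exploiting that the minimum value $\zeta\mapsto\min(F_\zeta+V)$ is itself Lipschitz and that for fixed $\zeta$ a generic $V$ gives a non-degenerate minimum, together with a compactness/covering scheme in $\zeta$---that sidesteps both the $\zeta$-differentiability and the branch-tracking circularity.
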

So, once one has a generic single resonant term $Z_{k'}$, the non-degeneracy $\lambda$ is lower bounded from zero for all double resonant points. There are finitely many $k''\in\mathbb{Z}^3$ which do not satisfy the condition (\ref{criterioneq2}), thus need to be treated as strong double resonance. Obviously, the number of such points is independent of $\epsilon$.

It follows from Theorem \ref{singlecylinder} that there are finitely many point $p=p'_j\in\Gamma'$ where the single resonant term $Z_{k'}$ has two global minimal points when it is treated as the function $\langle k',q\rangle$. It is clearly generic that the condition ({\bf H3}) holds for $Z_{k'}$. It implies that as one move $p$ along $\Gamma'$, the Mather set varies along one cylinder and jump to another cylinder when it crosses the point $p'_j$ which is called bifurcation point. It is also generic that none of these bifurcation points is strong double resonant point.
\end{proof}

\noindent{\bf Remark}. Given a resonant path determined by a class $g\in H_1(\mathbb{T}^2,\mathbb{Z})$, we obtain a channel $\mathbb{C}_g=\cup_{\lambda}\mathscr{L}_{\beta_{G_{\epsilon,0}}}(\lambda g)\subset H^1(\mathbb{T}^2, \mathbb{R})$. By the result we get in this paper, this channel has certain width except the place very close to the disk which corresponds to strong double resonance $\{\mathbb{F}_i\}$. For each $c\in\mathrm{int}\mathbb{C}_g$ with $d(c,\mathbb{F}_i)>O(\sqrt{\epsilon}^{1+d})$, the Aubry set is located in certain NHIC \cite{Mas}. By using the method of \cite{CY1,CY2,LC}, this Aubry set can be connected to other Aubry set nearby which is also on the cylinder. Sometimes, the local connecting orbit has to be the type looks like heterclinic orbit (Arnold's mechanism), sometimes the local connecting orbit has to be constructed by using cohomology equivalence. However, due to the work of \cite{Z1}, one can always connects such Aubry set to another one via Arnold's mechanism. Certain H\"older modulus continuity of weak KAM solutions is established in \cite{Z1} for the whole cylinder, not only restricted on the set of invariant circles.

Therefore, we have found transition chain along the resonant path with only finitely gaps around the strong double resonant points. These gaps have very small size like $O(\sqrt{\epsilon}^{1+d})$ with $d>0$. In another paper \cite{C15}, we show how to build up a transition chain of cohomology equivalence to cross the double resonance. We note that it was announced by Mather \cite{Mat} more than ten years ago.

\noindent{\bf Acknowledgement} The main content of this paper comes from the previous preprint \cite{C13} which seems quite long. The author splits it into several parts with modifications so that they are easier to read. This paper is one of them.

This work is supported by National Basic Research Program of China (973 Program, 2013CB834100), NNSF of China (Grant 11171146) and a program PAPD of Jiangsu Province, China.

\end{document}